\newtheorem{thm}{Theorem}[section]
\newtheorem{conj}{Conjecture}
\newtheorem{conjbis}{Conjecture}
\newtheorem{prop}[thm]{Proposition}
\newtheorem{cor}[thm]{Corollary}
\newtheorem{lem}[thm]{Lemma}
\theoremstyle{remark}
\newtheorem{rem}{\textsc{Remark}}
\theoremstyle{definition}
\newtheorem{defi}[thm]{Definition}
\newcommand{\Cb}{\mathbb{C}}
\newcommand{\Cc}{\mathcal{C}}
\newcommand{\Ib}{\mathbb{I}}
\newcommand{\Id}{\mathrm{Id}}
\newcommand{\Kc}{\mathcal{K}}
\newcommand{\Mc}{\mathcal{M}}
\newcommand{\Nb}{\mathbb{N}}
\newcommand{\Rb}{\mathbb{R}}
\newcommand{\Sc}{\mathcal{S}}
\newcommand{\Xc}{\mathcal{X}}
\DeclareMathOperator{\com}{com}
\DeclareMathOperator{\Diag}{Diag}
\newcommand{\drond}{\partial}
\renewcommand{\vec}[1]{\overrightarrow{#1}}
\title{\textsf{\textbf{On the concavity of a sum of elementary symmetric polynomials}}}
\author{\textsf{Xavier Lachaume}}
\date{\textsf{\normalsize{Summer 2017}}}
\begin{document}
\maketitle

\begin{center}
\textsf{
Laboratoire de Mathématiques et de Physique Théorique \\
Université de Tours - UMR 7350 du CNRS \\
Parc de Grandmont - 37200 Tours - France
}

\texttt{xavier.lachaume@lmpt.univ-tours.fr}
\end{center}

\vfill

\textsc{Abstract:}
We introduce a new problem on the elementary symmetric polynomials $\sigma_k$, stemming from the constraint equations of some modified gravity theory. For which coefficients is a linear combination of $\sigma_k$ $1/p$-concave, with $0 \leq k \leq p$? We establish connections between the $1/p$-concavity and the real-rootedness of some polynomials built on the coefficients.

We conjecture that if the restriction of the linear combination to the positive diagonal is a real-rooted polynomial, then the linear combination is $1/p$-concave. Using the theory of hyperbolic polynomials, we show that this would be implied by a short algebraic statement: if the polynomials $P$ and $Q$ of degree $n$ are real-rooted, then $\sum_{k=0}^n P^{(k)}Q^{(n-k)}$ is real-rooted as well. This is not proven yet.

We conjecture more generally that the global $1/p$-concavity is equivalent to the $1/p$-concavity on the positive diagonal. We prove all our guessings for $p=2$. The way is open for further developments.

\vfill

\tableofcontents

\newpage

\section{Introduction} \label{sectionintro}
\subsection{Physical and geometrical context}

Since its discovery one century ago, the theory of General Relativity (GR) has exhibited numerous qualities, as much in its theoretical properties as in its consistency with astronomical observations. The recent detections of gravitational waves seal the concordance between this theory and almost all that can be measured with current astronomical tools. However, some questions still remain open: the explanation of what is called dark matter and dark energy to account for huge but precise discrepancy between the matter and energy computed from the observations of the universe and those expected by GR; and the theoretical compatibility with quantum field theory, which would give birth to a quantum theory of gravitation.

Seeking to fill those gaps and, more generally, to better understand GR, theoretical physicists explored many ways to modify GR in order to test the properties of the newly created theories. This is the family of modified theories of gravity. Among them, the Lovelock theories appeared in the 1970's when D. Lovelock studied what would be a generalisation of GR in more than 4 dimensions (see \cite{Love70}, \cite{Love71}). The Lovelock theories are still quite unexplored in their mathematical properties.

In \cite{Lac17-2}, we try to solve the constraint equations of those theories: they are the necessary conditions for a space-like manifold to be the initial data of some Lovelock space-time. They are equations on the geometry of a manifold. They are known since the end of the 1980's (see \cite{Tei87}, \cite{Cho88}, \cite{Cho09-692}), but never studied from a geometrical point of view, unlike the constraint equations of GR. When the search for a solution is restricted to a conformal class, it can be reduced to a non-linear PDE system. A specific case of this problem -- a conformally flat vacuum space-time -- is equivalent to the prescription of a linear combination of $\sigma_k$-curvatures on the manifold. The prescription of a single $\sigma_k$-curvature is a classical geometrical problem called $\sigma_k$-Yamabe problem, a generalisation of the Yamabe curvature prescription problem. It has been studied in the 2000's, see \cite{Via00}, \cite{Via01}, \cite{Guan03}\cite{Li03OnSome}, \cite{Li05}, \cite{Li03AFully}, \cite{Guan07}, \cite{Gursky07}, \cite{Sheng07} \ldots\ In all these works, a very fundamental property of the $\sigma_k$ function is invoked: it is $\frac{1}{k}$-concave.

Here is our problem: when is a linear combination of $\sigma_k$'s concave? This question is only about the coefficients of the linear combination. It is not about gravitation, theoretical physics, geometry or PDE: it is a simple algebraic question, to which we sought algebraic answers. This is the topic of this paper.

In the first section we introduce the notations and some quick lemmas about concavity. Insofar as concavity of algebraic expressions is connected with the real-rootedness of polynomials, we dedicated the second section to this subject. Then in the third section we formulate some conjectures about the sought-after set of coefficients. In the fourth section we show connections with the theory of hyperbolic polynomials of L. G\aa rding which give birth to totally algebraic conjectures more likely to be solved.

We proved our conjectures for a linear combination of $1$, $\sigma_1$, $\sigma_2$.

\subsection{Notations}
\begin{defi}
Let $n \geq 1$, and $\Omega$ an open set of $\Rb^n$. For $f, g \in \Cc^2(\Omega \rightarrow \Rb)$, $P, Q \in \Rb[X]$, $x = (x_1, \ldots, x_n) \in \Rb^n$, $0 \leq k \leq n$, we write
\[\begin{array}{rll}
\toprule
f_i &= \drond_{x_i} f, \\
f_{ij} &= \drond_{x_i}\drond_{x_j} f, \\
\midrule
D(f) &= (f_i)_{1 \leq i \leq n} \in \Cc^1(\Omega \rightarrow \Rb^n) &\text{ the gradient of } f, \\
H(f) &= (f_{ij})_{1 \leq i,j \leq n} \in \Cc^0(\Omega \rightarrow \Sc_n(\Rb)) &\text{ the hessian of } f, \\
\midrule
g &\equiv f &\text{ if $f$ and $g$ have the same sign on $\Omega$}, \\
P &\equiv Q &\text{ if there exists $R \in \Rb(X)$ such that $P = R^2 Q$}, \\
\midrule
\sigma_k(x) &= \displaystyle{\sum_{i_1 < \ldots < i_k} x_{i_1}\ldots x_{i_k}} &\text{ the $k$-th elementary symmetric polynomial}. \\
\bottomrule
\end{array}\]

We assimilate real polynomials and their polynomial function on $\Rb$. We define the following sets:
\[\Gamma_k = \left\{y \in \Rb^n \quad \big| \quad \sigma_i(y) > 0 \quad \forall \ 0 \leq i \leq k\right\}.\]
Let us notice that $\Gamma_n = \left(\Rb_+^*\right)^n$. We write $\Ib = (1, \ldots, 1) \in \Rb^n$, and keep the same notation whatever the dimension is when there is no ambiguity. We denote by $\Delta$ the open diagonal half-axis
\[\Delta = \Rb_+^* \cdot \Ib = \left\{(t, \ldots, t) \in \Gamma_n \ | \ t > 0\right\}.\]

For $x, y \in \Rb^n$, we denote by
\[\begin{array}{rl}
\toprule
 ^t\!x & \text{ the transpose of } x, \\
(x|y) = ^t\!\!xy = ^t\!\!yx = \sum_i x_i y_i & \text{ the scalar product of $x$ and $y$}, \\
x \otimes y = (x_iy_j)_{ij} \in \Mc_n(\Rb) & \text{ the tensor product of $x$ and $y$}, \\
x'_i = (x_1, \ldots, \cancel{x_i}, \ldots, x_n) \in \Rb^{n-1} & \text{ the vector $x$ without its $i$-th coordinate}, \\
\midrule
(e_i)_{1 \leq i \leq n} & \text{ the canonic basis of } \Rb^n. \\
\bottomrule
\end{array}\]

For $\vec{a} = (a_0, a_1, \ldots, a_p) \in \Rb^{p+1}$, we set
\[\begin{array}{rll}
\toprule
f_{\vec{a}} &= a_0 + a_1 \sigma_1 + \ldots + a_p \sigma_p &\in \Rb[X_1, \ldots, X_n], \\
P_{\vec{a}} &= a_0 + a_1 X + \ldots + a_p X^p &\in \Rb[X], \\
\bottomrule
\end{array}\]
we then define
\[\begin{array}{rlll}
\toprule
\bar{f}_{\vec{a}} &= f(X, \ldots, X) &= \displaystyle{a_0 \binom{n}{0} + a_1 \binom{n}{1} X + \ldots + a_p \binom{n}{p} X^p} &\in \Rb[X], \\
\tilde{P}_{\vec{a}} &= X^p P\left(\dfrac{1}{X}\right) &= \displaystyle{a_p + a_{p-1}X + \ldots + a_0 X^p} &\in \Rb[X]. \\
\bottomrule
\end{array}\]
\end{defi}
We omit the subscript in case there is no ambiguity.

Finally, let us introduce three sets which are to be determined in the current paper.
\[\begin{array}{rl}
\toprule
\Kc_n^p &= \displaystyle{\left\{\vec{a} \in \Rb^{p+1} \quad \big| \quad f_{\vec{a}}^{1/p} \text{ is concave on } \Gamma_n\right\}} \\
\Xc_n^p &= \displaystyle{\left\{\vec{a} \in \Rb^{p+1} \quad \big| \quad \bar{f}_{\vec{a}}^{1/p} \text{ is concave on } \Rb_+^*\right\}} \\
\Xi_n^p &= \displaystyle{\left\{\vec{a} \in \Rb^{p+1} \quad \big| \quad \bar{f}_{\vec{a}}^{\phantom{1/p}} \text{ is real-rooted}\right\}} \\
\bottomrule
\end{array}\]

Obviously, $\Kc_n^p \subset \Xc_n^p$. Is the reverse true? Here is the first of our two main conjectures: the concavity of $f_{\vec{a}}$ on the diagonal $\Delta$ is a necessary and sufficient condition to its concavity on all $\Gamma_n$.
\begin{conj} \label{mainconj1}
Let $p \in \Nb^*$, $\vec{a} = (a_0, a_1, \ldots, a_p) \in (\Rb_+)^{p+1}$. Then
\begin{align*}
f_{\vec{a}} \text{ is $\frac{1}{p}$-concave on } \Gamma_n \quad &\Longleftrightarrow \quad f_{\vec{a}} \text{ is $\frac{1}{p}$-concave on } \Delta \\
&\Longleftrightarrow \quad \bar{f}_{\vec{a}} \text{ is $\frac{1}{p}$-concave on } \Rb_+^*.
\end{align*}
That is to say,
\[\Kc_n^p = \Xc_n^p.\]
\end{conj}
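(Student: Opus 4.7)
The easy direction $\Kc_n^p \subseteq \Xc_n^p$ is immediate: since $\Delta \subset \Gamma_n$ and $f_{\vec{a}}(t\Ib) = \bar{f}_{\vec{a}}(t)$, concavity of $f_{\vec{a}}^{1/p}$ on $\Gamma_n$ restricts along the diagonal to concavity of $\bar{f}_{\vec{a}}^{1/p}$ on $\Rb_+^*$. All the content of the conjecture lies in the reverse inclusion $\Xc_n^p \subseteq \Kc_n^p$, a strong ``one-dimensional implies $n$-dimensional'' statement.

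The plan is to route the proof through G\aa rding's theory of hyperbolic polynomials. Homogenize $f_{\vec{a}}$ by introducing an auxiliary variable $x_0$ and setting
$$F(x_0, x_1, \ldots, x_n) \;=\; \sum_{k=0}^{p} a_k\, x_0^{\,p-k}\, \sigma_k(x_1, \ldots, x_n),$$
a homogeneous polynomial of degree $p$, symmetric in $x_1, \ldots, x_n$, with $F(1, x) = f_{\vec{a}}(x)$. If $F$ can be shown hyperbolic with respect to a direction $e$ whose hyperbolicity cone $\Lambda$ contains $\{(1, x) : x \in \Gamma_n\}$, then G\aa rding's theorem yields concavity of $F^{1/p}$ on $\Lambda$, and restriction to the affine slice $\{x_0 = 1\}$ produces the desired concavity of $f_{\vec{a}}^{1/p}$ on $\Gamma_n$.

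The link with $\Xc_n^p$ is made via the classical shift identity $\sigma_k(x + t\Ib) = \sum_{j=0}^{k} \binom{n-j}{k-j}\, \sigma_j(x)\, t^{k-j}$: the restriction of $F$ to any line parallel to the all-ones direction is a univariate polynomial in $t$ whose coefficients are polynomials in the $\sigma_j(x)$, and when $x$ lies on the diagonal this restriction recovers $\bar{f}_{\vec{a}}$ up to a homogenizing factor. So hyperbolicity of $F$ restricted to the diagonal plane is exactly the condition $\vec{a} \in \Xc_n^p$; what remains is to propagate this diagonal hyperbolicity to lines in every other direction of $\Rb^{n+1}$.

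The main obstacle is precisely this propagation step, which reduces to the algebraic conjecture announced in the abstract: for real-rooted $P, Q \in \Rb[X]$ of degree $n$, the combination $\sum_{k=0}^{n} P^{(k)} Q^{(n-k)}$ should be real-rooted. Conditional on that statement, induction on the degree closes the argument. Unconditionally, low-rank cases can still be handled directly: at $p = 2$, the Hessian $H(\sigma_2) = \Ib \otimes \Ib - \Id$ has explicit eigenvalues $n-1$ and $-1$, so the concavity matrix $f H(f) - \tfrac{1}{2}\, D(f)\otimes D(f)$ can be diagonalised by hand on each $S_n$-orbit in $\Gamma_n$; its negative semidefiniteness then collapses to the single scalar inequality $a_1^2\, n \geq 2\, a_0\, a_2\, (n-1)$, which is exactly $\vec{a} \in \Xc_n^2$. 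For $p \geq 3$ the genuine difficulty remains this unresolved algebraic real-rootedness statement.
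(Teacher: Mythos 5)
Before assessing the argument, note that the statement is labelled a \emph{conjecture} in the paper, and the paper does not claim a general proof: the author explicitly writes that the conjecture is only established for $p=2$ (Theorem~\ref{thmp2}) and that the hyperbolicity machinery of Section~\ref{sectionhomog} targets the \emph{weaker} Conjecture~\ref{mainconj2}. Your sketch, conditional on the algebraic real-rootedness statement $\sum_k P^{(k)}Q^{(p-k)}$, follows exactly the trajectory of Section~\ref{sectionhomog}, and your $p=2$ remark matches the Hessian computation of Theorem~\ref{thmp2}. So the broad strategy is the paper's.

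There is, however, a concrete error in your reduction. You write that ``hyperbolicity of $F$ restricted to the diagonal plane is exactly the condition $\vec{a}\in\Xc_n^p$.'' This is not correct. Hyperbolicity of $F$ along a line in the direction $\Ib$ passing through the diagonal requires the restricted univariate polynomial to be \emph{real-rooted}, which is the condition $\vec{a}\in\Xi_n^p$, not $\Xc_n^p$. The set $\Xc_n^p$ is defined by $\tfrac1p$-concavity of $\bar f_{\vec a}$ on $\Rb_+^*$, which is strictly weaker once $p\ge 3$: the paper's remark after Theorem~\ref{thmdeg123} exhibits $P=X^3+X^2+\tfrac13 X$, which is $\tfrac13$-concave on $\Rb_+^*$ but not real-rooted, and the corresponding $\vec a$ lies in $\Xc_n^3\setminus\Xi_n^3$. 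Consequently your homogenization $F(x_0,x)=\sum_k a_k x_0^{p-k}\sigma_k(x)$ fails even to be hyperbolic in the direction $(1,\Ib)$ when $\vec a\in\Xc_n^p\setminus\Xi_n^p$ (indeed $F(x_0+t,\,t\Ib)$ has the same roots as $\bar f_{\vec a}$ up to a fractional-linear change of variable). So the hyperbolicity route can at most yield $\Xi_n^p\subset\Kc_n^p$ — that is, Conjecture~\ref{mainconj2} — and leaves untouched the genuine content of Conjecture~\ref{mainconj1}, namely the inclusion $\Xc_n^p\setminus\Xi_n^p\subset\Kc_n^p$. Closing \emph{that} gap would require an independent argument (for instance something in the spirit of Conjecture~\ref{conj3} at the level of the specific binomial-weighted polynomials $\bar f_{\vec a}$), and no such argument appears in your sketch.

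A secondary remark: you homogenize with a single extra variable $x_0$. The paper instead introduces $p$ additional variables, obtained by factoring $\bar f_{\vec a}$ once its real-rootedness is assumed, and works with the semi-symmetric polynomials $s_{n,p}(x,\lambda)=\sum_k \sigma_k(x)\sigma_{p-k}(\lambda)/\binom{n}{k}$ in $n+p$ variables. Your $F(x_0,x)$ is a restriction of $s_{n,p}$ to the ray $\lambda=x_0\lambda^\ast$, so it inherits hyperbolicity from $s_{n,p}$ but need not be equivalent to it; the paper's two-sided equivalence with the algebraic conjecture (Theorems~\ref{pip} and~\ref{thmPQscinde}) depends on having the full $p$ extra symmetric variables.
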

We still do not have a proof of this conjecture, yet we can prove the equivalence in a few cases which will be presented in section \ref{sectionsigma}.

A weaker version of this conjecture can be expressed in terms of real-rootedness of $\bar{f}_{\vec{a}}$.
\begin{conj} \label{mainconj2}
Let $p \in \Nb^*$, $\vec{a} = (a_0, a_1, \ldots, a_p) \in (\Rb_+)^{p+1}$. Then
\[f_{\vec{a}} \text{ is $\frac{1}{p}$-concave on } \Gamma_n \quad \Longleftarrow \quad \bar{f}_{\vec{a}} \text{ is real-rooted}.\]
That is to say,
\[\Xi_n^p \subset \Kc_n^p.\]
\end{conj}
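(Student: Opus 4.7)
The plan is to push Conjecture \ref{mainconj2} through the theory of hyperbolic polynomials and reduce it to the algebraic real-rootedness statement mentioned in the introduction. Recall Gårding's concavity theorem: if $F \in \Rb[X_1, \ldots, X_n]$ is hyperbolic of degree $p$ with hyperbolicity cone $C$, then $F^{1/p}$ is concave on $C$. It therefore suffices to show that $f_{\vec{a}}$ is hyperbolic with respect to $\Ib = (1, \ldots, 1)$ and that its hyperbolicity cone contains $\Gamma_n$.

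Hyperbolicity with respect to $\Ib$ amounts to real-rootedness in $t$ of $t \mapsto f_{\vec{a}}(x + t\Ib)$ for every $x \in \Rb^n$. Setting $P_x(s) := \prod_{i=1}^n (s+x_i)$ and Taylor-expanding $P_x(s+t)$ at $s = 0$ yields the identity $\sigma_k(x + t\Ib) = P_x^{(n-k)}(t)/(n-k)!$. Combined with $\bar{f}_{\vec{a}}^{(k)}(0) = n!\,a_k/(n-k)!$, this gives
\[
f_{\vec{a}}(x + t\Ib) \;=\; \frac{1}{n!}\sum_{k=0}^{n} \bar{f}_{\vec{a}}^{(k)}(0)\,P_x^{(n-k)}(t),
\]
so that $f_{\vec{a}}(x + t\Ib)$ appears as a bilinear combination of derivatives of two real-rooted polynomials: $P_x$ (of degree $n$, with roots $-x_i$) and $\bar{f}_{\vec{a}}$ (of degree $p$, real-rooted by hypothesis). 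The target is to deduce the real-rootedness of the right-hand side in $t$ from the algebraic statement of the introduction, ``$P,Q$ real-rooted of the same degree $\Rightarrow \sum_k P^{(k)}Q^{(n-k)}$ real-rooted''. Two adjustments must be made: the degree mismatch ($P_x$ has degree $n$ while $\bar{f}_{\vec{a}}$ has degree $p$), which can be bridged by replacing $\bar{f}_{\vec{a}}(s)$ with the degree-$n$ real-rooted polynomial $\bar{f}_{\vec{a}}(s)(s+\varepsilon)^{n-p}$ and letting $\varepsilon \to 0^+$; and the fact that our expression is a ``mixed-point'' bilinear sum, with one derivative at $0$ and the other at $t$, which must be reconciled with the same-point form of the algebraic statement by an affine substitution in the two variables.

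Once real-rootedness of $f_{\vec{a}}(x + t\Ib)$ in $t$ is secured for every $x$, the hyperbolicity cone of $f_{\vec{a}}$ is the connected component of $\{y \in \Rb^n : f_{\vec{a}}(y) \neq 0\}$ containing $\Ib$. Since the $a_k$ are non-negative and each $\sigma_k$ is positive on $\Gamma_n$, we have $f_{\vec{a}} > 0$ throughout the connected set $\Gamma_n \ni \Ib$, so $\Gamma_n$ lies in that component; Gårding's theorem then delivers the $\tfrac{1}{p}$-concavity of $f_{\vec{a}}$ on $\Gamma_n$. The principal obstacle is the algebraic statement itself: as the introduction stresses, it is currently only a conjecture, so this strategy establishes Conjecture \ref{mainconj2} only conditionally on it. The degree-matching limit and the mixed-point-to-same-point reduction I expect to be technical rather than substantive.
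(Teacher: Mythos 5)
There is a genuine gap, and it is exactly the obstruction the paper itself flags before introducing semi‑symmetric polynomials. G\aa rding's concavity theorem (\ref{hypconc}) applies only to \emph{homogeneous} polynomials — homogeneity is item (i) in the very definition of a $v$-hyperbolic polynomial — whereas $f_{\vec{a}} = a_0 + a_1\sigma_1 + \cdots + a_p\sigma_p$ is a sum of terms of degrees $0$ through $p$ and is therefore not homogeneous (unless all but one $a_k$ vanish). So ``it suffices to show $f_{\vec{a}}$ is hyperbolic with respect to $\Ib$'' is false at the outset: $f_{\vec{a}}$ cannot be hyperbolic, and proving $t\mapsto f_{\vec{a}}(x+t\Ib)$ real-rooted for every $x$ does not, by itself, yield $\frac1p$-concavity. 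The paper circumvents this by homogenizing with $p$ extra variables: it introduces $s_{n,p}(x,\lambda) = \sum_k \sigma_k(x)\sigma_{p-k}(\lambda)/\binom{n}{k}$ (homogeneous of degree $p$ in $n+p$ variables), observes that when $\bar{f}_{\vec{a}}$ is real-rooted $f_{\vec{a}}$ is proportional to a slice $s_{n,p}(\cdot,\lambda_0)$, and then reduces Conjecture~\ref{mainconj2} to the hyperbolicity of $s_{n,p}$ (Conjecture~\ref{mainconj4}), which is what feeds into Theorem~\ref{thmPQscinde}.

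The second difficulty you label ``technical'' is in fact the same obstruction in disguise. Your identity $f_{\vec{a}}(x+t\Ib) = \frac{1}{n!}\sum_k \bar{f}_{\vec{a}}^{(k)}(0)\,P_x^{(n-k)}(t)$ is correct, but the factors $\bar{f}_{\vec{a}}^{(k)}(0)$ are \emph{constants}, not the values $Q^{(k)}(t)$ of a running polynomial; no affine substitution in the variables turns a sequence of fixed constants into derivatives of a single polynomial evaluated at the moving point $t$, which is what the statement ``$\sum_k P^{(k)}Q^{(p-k)}$ is real-rooted'' requires. Promoting those constants to derivatives is exactly what translating the auxiliary $\lambda$-variables by $X\cdot\Ib$ accomplishes, via $\sigma_{p-k}(\lambda+X\cdot\Ib)=\frac{1}{k!}\sigma_p(\lambda+X\cdot\Ib)^{(k)}$ in the paper's Theorem~\ref{thmPQscinde}. (Your $\varepsilon$-perturbation $\bar{f}_{\vec{a}}(s)(s+\varepsilon)^{n-p}$ also does not fix the degree mismatch cleanly, since it changes all $\bar{f}_{\vec{a}}^{(k)}(0)$ and breaks the identity; the paper instead matches degrees with Corollary~\ref{sigmaxsigmamu}, replacing $x\in\Rb^n$ by $\mu\in\Rb^p$.) To salvage your strategy you essentially have to introduce the semi-symmetric polynomials, at which point you have reproduced the paper's route through Conjecture~\ref{mainconj4}.
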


Indeed, as will be shown in proposition \ref{concroots}, $\Xi_n^p \subset \Xc_n^p$.

\subsection{Concavity lemmas}
\begin{defi}
Let $\Omega$ be an open domain of $\Rb^n$, $u$ be a function of $\Cc^0(\Omega \rightarrow \Rb_+^*)$, and $\mu > 0$. We say that $u$ is $\mu$-concave if $u^\mu$ is concave.
\end{defi}

Let us present two lemmas.

\begin{lem} \label{concmu}
Let $\Omega$ be an open domain of $\Rb^n$, $u$ be a function of $\Cc^2(\Omega \rightarrow \Rb_+^*)$, and $\mu > 0$. Then
\[H\left(u^\mu\right) \equiv u H(u) + (\mu-1) D(u) \otimes D(u).\]
$u$ being $\Cc^2$, we get the following equivalence:
\[u \text{ is $\mu$-concave on } \Omega \qquad \Longleftrightarrow \qquad u H(u) + (\mu-1) D(u) \otimes D(u) \text{ is negative on } \Omega.\]
\end{lem}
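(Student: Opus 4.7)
The proof is a direct computation followed by invocation of the standard characterisation of concavity by the Hessian. The plan is to differentiate $u^\mu$ twice and factor out a positive scalar.

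First I would compute the gradient componentwise: since $u > 0$ on $\Omega$ and $\mu > 0$, we have $\partial_i(u^\mu) = \mu u^{\mu-1} u_i$, so $D(u^\mu) = \mu u^{\mu-1} D(u)$. Differentiating once more gives
\[
\partial_j \partial_i (u^\mu) = \mu u^{\mu-1} u_{ij} + \mu(\mu-1) u^{\mu-2} u_i u_j,
\]
which I would then collect into the matrix identity
\[
H(u^\mu) = \mu u^{\mu-2}\bigl[u H(u) + (\mu-1) D(u) \otimes D(u)\bigr].
\]
At this step the extension of the symbol $\equiv$ to matrix-valued functions must be understood pointwise: two symmetric matrices that differ by multiplication by a strictly positive scalar have the same signature at every point. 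Since $\mu u^{\mu-2}$ is strictly positive on $\Omega$ (using $u > 0$ and $\mu > 0$), we conclude $H(u^\mu) \equiv u H(u) + (\mu-1) D(u) \otimes D(u)$, which proves the first claim.

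For the equivalence, I would then apply the classical criterion: a $\mathcal{C}^2$ function on an open domain is concave if and only if its Hessian is negative semi-definite everywhere on the domain. By definition, $u$ is $\mu$-concave if and only if $u^\mu$ is concave, i.e.\ if and only if $H(u^\mu)$ is negative semi-definite on $\Omega$. Combining with the factorisation above, this is equivalent to $u H(u) + (\mu-1) D(u) \otimes D(u)$ being negative semi-definite on $\Omega$, which is what the second statement asserts.

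There is no real obstacle here; the only subtle point is interpretational, namely that the relation $\equiv$ between matrix-valued maps has to be read as "same signature pointwise", which is automatic when they differ by a pointwise-positive scalar multiple. Everything else is the chain rule and the textbook characterisation of concavity.
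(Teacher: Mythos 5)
Your proof is correct and follows essentially the same route as the paper, which simply records the factorisation $H(u^\mu) = \mu u^{\mu-2}\bigl[u H(u) + (\mu-1)D(u)\otimes D(u)\bigr]$ and leaves the positivity of the prefactor and the Hessian criterion implicit. You have merely spelled out the same chain-rule computation and the reading of $\equiv$ in full detail.
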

\begin{proof}
This comes directly from the computation of the hessian:
\begin{equation}
H\left(u^\mu\right) = \mu u^{\mu-2} \left[u H(u) + (\mu-1) D(u) \otimes D(u)\right].
\end{equation}
\end{proof}

\begin{lem} \label{concgamma}
Let $\Omega$ be an open domain of $\Rb^n$, $a, b \in \Cc^2(\Omega \rightarrow \Rb_+^*)$, and $\alpha, \beta > 0$ such that $a$ is $\alpha$-concave and $b$ is $\beta$-concave.

Then $(ab)$ is $\gamma$-concave, with \[\dfrac{1}{\gamma} := \dfrac{1}{\alpha} + \dfrac{1}{\beta}.\]
\end{lem}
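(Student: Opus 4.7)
The plan is to invoke Lemma~\ref{concmu} with $\mu = \gamma$, which reduces $\gamma$-concavity of $ab$ to showing that the symmetric matrix
\[
M := (ab)\,H(ab) + (\gamma-1)\,D(ab)\otimes D(ab)
\]
is $\leq 0$ on $\Omega$. First I would expand $M$ using the Leibniz rules $D(ab) = bD(a)+aD(b)$ and $H(ab) = bH(a) + aH(b) + D(a)\otimes D(b) + D(b)\otimes D(a)$; collecting terms gives
\[
M = ab^2 H(a) + a^2 b H(b) + \gamma ab\bigl[D(a)\otimes D(b) + D(b)\otimes D(a)\bigr] + (\gamma-1)\bigl[b^2 D(a)\otimes D(a) + a^2 D(b)\otimes D(b)\bigr].
\]

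Next, I would rewrite the two hypotheses via Lemma~\ref{concmu} as $aH(a) \leq (1-\alpha)\, D(a)\otimes D(a)$ and $bH(b) \leq (1-\beta)\, D(b)\otimes D(b)$ (inequalities of symmetric matrices). Multiplying these by the positive scalars $b^2$ and $a^2$ respectively and substituting into the formula for $M$, the Hessian terms absorb the diagonal rank-one pieces and I obtain
\[
M \leq (\gamma-\alpha)\, b^2 D(a)\otimes D(a) + (\gamma-\beta)\, a^2 D(b)\otimes D(b) + \gamma ab\bigl[D(a)\otimes D(b) + D(b)\otimes D(a)\bigr].
\]
Setting $p := bD(a)$ and $q := aD(b)$, it then suffices to prove that
\[
N := (\gamma-\alpha)\, p\otimes p + (\gamma-\beta)\, q\otimes q + \gamma\,(p\otimes q + q\otimes p) \leq 0.
\]

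Testing $N$ on an arbitrary $v \in \Rb^n$ produces, in the scalars $X := (v|p)$ and $Y := (v|q)$, the quadratic form $(\gamma-\alpha)X^2 + 2\gamma XY + (\gamma-\beta)Y^2$. Its diagonal coefficients are negative, since the defining relation $\tfrac{1}{\gamma} = \tfrac{1}{\alpha} + \tfrac{1}{\beta} > \tfrac{1}{\alpha},\, \tfrac{1}{\beta}$ forces $\gamma < \min(\alpha,\beta)$; and its $2\times 2$ determinant $(\gamma-\alpha)(\gamma-\beta) - \gamma^2 = \alpha\beta - (\alpha+\beta)\gamma$ vanishes, by exactly the same harmonic relation. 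Hence the form is $\leq 0$ for every $v$, so $N \leq 0$ and therefore $M \leq 0$, which is what was required.

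The computations themselves are elementary. The only genuinely delicate step is the last determinant check, and in fact it is the heart of the lemma: the rule $\tfrac{1}{\gamma} = \tfrac{1}{\alpha} + \tfrac{1}{\beta}$ is precisely tailored so that the cross terms $\gamma ab[D(a)\otimes D(b)+D(b)\otimes D(a)]$ produced by the Leibniz rule are exactly balanced by the two rank-one contributions $b^2 D(a)\otimes D(a)$ and $a^2 D(b)\otimes D(b)$.
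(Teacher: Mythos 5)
Your proof is correct and follows essentially the same line as the paper's: apply Lemma~\ref{concmu}, expand via Leibniz, and absorb the $\alpha$- and $\beta$-concavity hypotheses so that only the three rank-one pieces remain. The only difference is cosmetic: where you certify negativity of the residual $N$ by checking the $2\times 2$ quadratic form in $(v|p),(v|q)$ (negative diagonal, zero determinant), the paper exhibits the same fact directly as a rank-one factorization, $N = -\frac{1}{\alpha+\beta}\,(\alpha b D(a) - \beta a D(b))\otimes(\alpha b D(a) - \beta a D(b))$, which is of course what a negative-semidefinite $2\times 2$ form with vanishing determinant must look like.
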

\begin{proof}
The functions are $\Cc^2$, so they are concave if and only if their hessian is negative. Using the lemma \ref{concmu}, we compute the hessian of $(ab)^\gamma$:
\begin{align*}
H\left((ab)^\gamma\right)
	&\equiv a b H(ab) + (\gamma-1) D(ab) \otimes D(ab) \\
	&= ab\left[H(a)b+D(a)\otimes D(b) + D(b)\otimes D(a) + a H(b)\right] \\
	&\qquad + (\gamma-1)\left(aD(b) + D(a)b\right)\otimes \left(aD(b) + D(a)b\right) \\
	&= b^2\left[aH(a) + (\alpha-1) D(a)\otimes D(a)\right] + (\gamma - \alpha)b^2 D(a)\otimes D(a) \\
	&\qquad + a^2\left[bH(b) + (\beta-1) D(b)\otimes D(b)\right] + (\gamma - \beta)a^2 D(b)\otimes D(b) \\
	&\qquad + ab \gamma \left[D(a) \otimes D(b) + D(b) \otimes D(a)\right] \\
	&= b^2\left[aH(a) + (\alpha-1) D(a)\otimes D(a)\right] + a^2\left[bH(b) + (\beta-1) D(b)\otimes D(b)\right] \\
	&\qquad - \dfrac{1}{\alpha + \beta} \left(b\alpha D(a) - a \beta D(b)\right) \otimes \left(b\alpha D(a) - a \beta D(b)\right)
\end{align*}
which is the sum of three negative matrices.
\end{proof}

\begin{rem}
This short lemma, which did not seem to be formulated in this way in the literature, is analogous to Hölder's inequality. It enables to skip the use of Hölder's inequality, such as in the proof of the theorem \ref{concsigmak+lsigmak} in the subsection \ref{othertools}.
\end{rem}

Before getting to the heart of matter, we present some properties of real polynomials which will be useful for next section. Except the proposition \ref{propmanu} and the theorem \ref{thmdeg123}, those are already known results.

\section{Polynomials} \label{sectionpolynomials}
\subsection{Concavity and real-rootedness}

\begin{defi}
Let $P \in \Rb_+[X]$ be a polynomial with non-negative coefficients, $\deg P = n$.

We say that $P$ verifies
\begin{enumerate}[$(P1)$]
	\item if and only if $P$ is real-rooted (thus with non-positive roots); \label{P1}
	\item if and only if $PP'' + \left(\frac{1}{n}-1\right)P'^2 \leq 0$ on $\Rb$; \label{P2}
	\item if and only if $P$ is $\frac{1}{n}$-concave on $\Rb_+^*$. \label{P3}
\end{enumerate}
\end{defi}

\begin{prop} \label{concroots}
\[(P\ref{P1}) \qquad \Longrightarrow \qquad (P\ref{P2}) \qquad \Longrightarrow \qquad (P\ref{P3}).\]
\end{prop}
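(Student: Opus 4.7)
The plan is to handle the two implications separately, with the first relying on Cauchy--Schwarz applied to the logarithmic derivative of $P$, and the second being a direct instance of lemma \ref{concmu}.

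For $(P\ref{P1}) \Rightarrow (P\ref{P2})$, I would write $P = a \prod_{i=1}^{n}(X + r_i)$ with $a \geq 0$ and $r_i \geq 0$ (the roots of a real-rooted polynomial with non-negative coefficients are non-positive). Away from the roots, the logarithmic derivative satisfies
\[
\frac{P'}{P} = \sum_{i=1}^n \frac{1}{X+r_i}, \qquad \left(\frac{P'}{P}\right)' = \frac{PP'' - P'^2}{P^2} = -\sum_{i=1}^n \frac{1}{(X+r_i)^2}.
\]
Cauchy--Schwarz (with the constant vector $(1,\ldots,1)$) gives
\[
\left(\sum_{i=1}^n \frac{1}{X+r_i}\right)^2 \leq n \sum_{i=1}^n \frac{1}{(X+r_i)^2},
\]
which rearranges exactly into $\frac{1}{n} P'^2 \leq P'^2 - P P''$, that is, $PP'' + \left(\tfrac{1}{n}-1\right) P'^2 \leq 0$. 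Since this inequality is between two polynomials, it extends by continuity from the complement of the (finite) root set to all of $\Rb$.

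For $(P\ref{P2}) \Rightarrow (P\ref{P3})$, observe first that $P$ has non-negative coefficients and is not identically zero, so $P > 0$ on $\Rb_+^*$. Then lemma \ref{concmu}, specialised to a single variable with $\mu = 1/n$ and $u = P$, states that $P$ is $\frac{1}{n}$-concave on $\Rb_+^*$ if and only if $P P'' + \left(\tfrac{1}{n}-1\right) P'^2 \leq 0$ on $\Rb_+^*$. This is exactly property $(P\ref{P2})$, restricted to $\Rb_+^*$, so the implication is immediate.

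No step presents a serious obstacle: the only care needed is in the first implication, where Cauchy--Schwarz is valid only away from the roots, and the extension to all of $\Rb$ is granted by the polynomial nature of both sides of the inequality. The reverse implications are not claimed here and are indeed false in general (for instance $(P\ref{P3})$ concerns only $\Rb_+^*$ while $(P\ref{P2})$ is stated on all of $\Rb$), which is consistent with why the statement is written as a chain of one-way implications.
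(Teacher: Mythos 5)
Your proof is correct and follows essentially the same route as the paper: the second implication is read off directly from lemma \ref{concmu}, and the first uses the factorisation of $P$, the logarithmic derivative identities, and Cauchy--Schwarz with the all-ones vector. Your added remark about extending the inequality by continuity across the root set is a small point of care that the paper leaves implicit but does not change the argument.
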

\begin{proof}
The second implication comes directly from lemma \ref{concmu}.

For the first implication, here is a classical proof that uses Cauchy-Schwarz inequality. Let us suppose that $P = c(X+\mu_1)\ldots(X+\mu_n)$. At first we compute that
\[\frac{P'}{P} = \sum_{k=1}^n \frac{1}{X+\mu_k},\]
and
\[\left(\frac{P'}{P}\right)' = \frac{PP'' - P'^2}{P^2} = -\sum_{k=1}^n \frac{1}{(X+\mu_k)^2}.\]
Then,
\begin{align*}
P P'' + \left(\frac{1}{n}-1\right)P'^2
  &\equiv n\frac{PP'' - P'^2}{P^2} + \left(\frac{P'}{P}\right)^2 \\
  &= -n\sum_{k=1}^n \frac{1}{(X+\mu_k)^2} + \left(\sum_{k=1}^n \frac{1}{X+\mu_k}\right)^2 \\
  &\leq 0 \quad \text{ on } \Rb,
\end{align*}
according to Cauchy-Schwarz inequality, thus $(P\ref{P2})$.
\end{proof}
\begin{proof}
There is an other way to prove directly $(P\ref{P1}) \Longrightarrow (P\ref{P3})$, by using our lemma \ref{concgamma}.

There exist $c, \mu_1, \ldots, \mu_n \in \Rb_+$ such that 
\[P = c(X+\mu_1)\ldots(X+\mu_n).\]
We proceed by induction on the degree of $P$.
\begin{itemize}
\item For $n=1$, $P = c(X + \mu_1)$ is an affine function so it is concave on $\Rb_+^*$.
\item For $n \geq 2$, the polynomial $c(X + \mu_1)\ldots(X + \mu_{n-1})$ is real-rooted, so by hypothesis we assume that
\[\left[c(X + \mu_1)\ldots(X + \mu_{n-1})\right]^{1/n-1} \text{ is concave on } \Rb_+^*.\]
The function $X + \mu_n$ is concave on $\Rb_+^*$, so we can apply the lemma \ref{concgamma} and
\[\left(c(X + \mu_1)\ldots(X + \mu_{n-1})(X + \mu_n)\right)^{\gamma} \text{ is concave},\]
with $\gamma = \frac{1}{\frac{1}{1/(n-1)} + \frac{1}{1}} = \frac{1}{n}$.
\end{itemize}
\end{proof}

It seems to us that $(P\ref{P2}) \Longrightarrow (P\ref{P1})$, but we were not able to prove it. We hope that some reader will find the answer to this conjecture.
\begin{conj} \label{conj3}
Let $P \in \Rb_+[X]$ be a polynomial with non-negative coefficients, $\deg P = n$. Then
\[P \text{ is real-rooted} \qquad \Longleftarrow \qquad PP'' + \left(\dfrac{1}{n}-1\right)P'^2 \leq 0 \text{ on } \Rb.\]
\end{conj}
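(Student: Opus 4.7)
\emph{Proof plan.} The natural approach is to prove the contrapositive: if $P$ has at least one pair of non-real roots, exhibit $x_0 \in \Rb$ where $PP'' + (\tfrac{1}{n}-1)P'^2 > 0$. The starting point is the algebraic identity underlying the Cauchy--Schwarz argument in proposition \ref{concroots}: writing $P = c\prod_{k=1}^n(X+\mu_k)$ over $\Cb$ with $c > 0$, Lagrange's identity yields the polynomial identity
\begin{equation*}
-n\left(PP'' + \left(\tfrac{1}{n}-1\right)P'^2\right) \;=\; c^2\,\Sigma(X), \qquad \Sigma(X) := \sum_{1\leq i<j\leq n}(\mu_i-\mu_j)^2\prod_{k\neq i,j}(X+\mu_k)^2,
\end{equation*}
valid even when some $\mu_k$ are complex. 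Condition $(P\ref{P2})$ is thus equivalent to $\Sigma \geq 0$ on $\Rb$, and everything reduces to finding some $x_0\in\Rb$ with $\Sigma(x_0) < 0$ as soon as conjugate pairs are present.

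Two natural candidates for such $x_0$ present themselves. The first is asymptotic, $|x_0|\to\infty$: the leading coefficient of $\Sigma$ (a polynomial in $X$ of degree $\leq 2n-4$) is $(n-1)e_1(\mu)^2 - 2ne_2(\mu)$, whose negativity is equivalent to the failure of the first Newton inequality on the coefficients of $P$. The second is $x_0 = -\alpha$ for a complex root $\mu_k = \alpha + i\beta$ with $\beta > 0$, at which $(x_0+\mu_k)^2 = -\beta^2$ is a negative real and the diagonal summand for $\{i,j\}=\{k,\bar k\}$ contributes $-4\beta^2\prod_{m\neq k,\bar k}(-\alpha+\mu_m)^2$. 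For $n=2$ both candidates coincide with negativity of the discriminant and settle the conjecture at once. For $n\geq 3$ the Newton candidate is not always decisive: the polynomial $P(X) = X(X^2+2X+\tfrac{101}{100})$ has non-negative coefficients, one real and two complex roots, and $(n-1)e_1^2 - 2ne_2 = 1.94 > 0$; yet a direct computation gives $\Sigma(-1) = -0.0598 < 0$, so $(P\ref{P2})$ fails at $x_0 = -1 = -\alpha$, confirming the second candidate in this example.

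I would therefore focus on $x_0 = -\alpha$ and try to show that the strictly negative diagonal contribution always dominates. Split the pairs $\{i,j\}$ in $\Sigma(x_0)$ into three classes: the diagonal $\{k,\bar k\}$ (negative); pairs of two roots both disjoint from $\{k,\bar k\}$, which, after pairing up any remaining complex conjugate pairs, become non-negative reals multiplied by the positive factor $|x_0+\mu_k|^2|x_0+\bar\mu_k|^2=\beta^4$; and mixed pairs containing exactly one of $\{k,\bar k\}$, complex individually but summing in conjugate pairs to a real quantity controllable by elementary estimates. The main obstacle will be to show that the negative contribution dominates uniformly when several complex conjugate pairs coexist, and to dispose of the degeneracy where a real root of $P$ sits precisely at $-\alpha$ so that some factor vanishes; a density/perturbation argument should reduce to generic position. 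A backup route is a degree induction: first prove that $(P\ref{P2})$ for $P$ of degree $n$ implies the analogous condition for $P'$ of degree $n-1$, then combine the induction hypothesis (giving $P'$ real-rooted) with Rolle/Gauss--Lucas-type considerations to force $P$ real-rooted. This route relies however on an intermediate implication which is itself non-obvious and would have to be attacked first.
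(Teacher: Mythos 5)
This statement is not a theorem in the paper: it is Conjecture~\ref{conj3}, and the text immediately preceding it states explicitly that the authors ``were not able to prove it.'' The paper only settles the cases $\deg P = 2$ and $\deg P = 3$ (Theorem~\ref{thmdeg123}), and does so by elementary discriminant computations together with the Cardano depression for $n=3$ --- not by anything resembling your Lagrange decomposition. There is therefore no ``paper's own proof'' of the general conjecture to compare you against.

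As to the merits of your plan: the starting identity is correct. Writing $P = c\prod_k(X+\mu_k)$ one does indeed have
\[
-n\Big(PP'' + \big(\tfrac1n-1\big)P'^2\Big) = c^2 \sum_{i<j}(\mu_i-\mu_j)^2\prod_{k\neq i,j}(X+\mu_k)^2,
\]
and this is the algebraic form of the Cauchy--Schwarz step already used in the paper's proof of Proposition~\ref{concroots}, valid over $\Cb$. Your numerical example $P=X(X^2+2X+\tfrac{101}{100})$ also checks out: $(P\ref{P2})$ does fail at $X=-1$, and the asymptotic ``Newton'' test does not detect this. However, what you have written is a research plan, not a proof, and you are candid about this yourself. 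The load-bearing step --- that the strictly negative diagonal contribution from one conjugate pair always dominates the other terms of $\Sigma(-\alpha)$, uniformly over arbitrary additional conjugate pairs and arbitrary real roots --- is precisely the hard content of the conjecture, and you leave it entirely open, as well as the degenerate case where a real root lands at $-\alpha$. The fallback induction route likewise depends on the unproved intermediate claim that $(P\ref{P2})$ for $P$ passes to $(P'\ref{P2})$ for $P'$, which is not at all clear (the normalising constant changes from $\tfrac1n$ to $\tfrac1{n-1}$ in the wrong direction). So the conjecture remains exactly as open after your proposal as before it; the decomposition into diagonal, disjoint, and mixed pairs is a reasonable place to dig, but no step beyond the identity has actually been established.
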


One of the hints for this conjecture is the following result.
\begin{prop} \label{propmanu}
Let $P \in \Rb[X]$, $\deg P = n$, and $\tilde{P} := X^n P\left(\dfrac{1}{X}\right)$. Then
\[(P^{1/n})'' \quad \equiv \quad (\tilde{P}^{1/n})''.\]
This implies that $P$ is $\frac{1}{n}$-concave on $\Rb_+^*$ if and only if $\tilde{P}$ is.
\end{prop}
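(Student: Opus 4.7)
The plan is to express $\tilde P^{1/n}$ in terms of $P^{1/n}$ via a reciprocal substitution and then differentiate twice. On $\mathbb{R}_+^*$ (the natural domain on which both $P^{1/n}$ and $\tilde P^{1/n}$ are real and positive under the hypotheses of the proposition), set $g(X) := P(X)^{1/n}$. Since $\tilde P(X) = X^{n} P(1/X)$, taking the positive $n$-th root gives the clean identity
\[
\tilde P(X)^{1/n} \;=\; X\,g(1/X).
\]

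The main computation is to differentiate $f(X) := X\,g(1/X)$ twice. A short product-and-chain-rule calculation shows that the two first-order terms cancel pairwise, leaving
\[
f''(X) \;=\; \frac{1}{X^{3}}\,g''(1/X),
\]
so that
\[
(\tilde P^{1/n})''(X) \;=\; \frac{1}{X^{3}}\,(P^{1/n})''(1/X).
\]
Since $1/X^{3} > 0$ on $\mathbb{R}_+^*$ and $X \mapsto 1/X$ is a bijection of $\mathbb{R}_+^*$ onto itself, the function $(\tilde P^{1/n})''$ is globally non-positive on $\mathbb{R}_+^*$ if and only if $(P^{1/n})''$ is, which is precisely the claimed equivalence of $\tfrac{1}{n}$-concavity.

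I do not expect any real obstacle beyond verifying the one-line differentiation; the crucial point is that the factor $X$ in front of $g(1/X)$ is exactly what kills both the $g(1/X)$-term from the first differentiation of the product and the $g'(1/X)/X^{2}$-term from the chain rule. As an alternative, more computational route, one can apply lemma \ref{concmu} to $P$ and to $\tilde P$ and expand $\tilde P\,\tilde P'' + (\tfrac{1}{n}-1)(\tilde P')^{2}$ in terms of $P, P', P''$ evaluated at $1/X$. The coefficients of $P(1/X)^{2}$ and of $P(1/X)P'(1/X)$ then both vanish — this is where the matching between the exponent $1/n$ and the degree $n$ of $P$ is essential — yielding
\[
\tilde P\,\tilde P'' + \left(\tfrac{1}{n}-1\right)(\tilde P')^{2} \;=\; X^{2n-4}\left[PP'' + \left(\tfrac{1}{n}-1\right)(P')^{2}\right]\!(1/X),
\]
from which, after dividing by the positive factors $\tfrac{1}{n}\tilde P^{1/n-2}$ and $X^{2n-4}$, the same conclusion follows.
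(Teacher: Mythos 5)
Your proof is correct, and your primary route is genuinely different from (and cleaner than) the one in the paper. The paper works entirely at the polynomial level: it computes $\tilde P'$ and $\tilde P''$ by the product and chain rules, then expands $\tilde P\tilde P'' + (\tfrac{1}{n}-1)\tilde P'^2$ and observes that the coefficients of $P(1/X)^2$ and $P(1/X)P'(1/X)$ vanish, leaving
\[
\tilde P\,\tilde P'' + \left(\tfrac{1}{n}-1\right)\tilde P'^2 \;=\; X^{2(n-2)}\left[PP'' + \left(\tfrac{1}{n}-1\right)P'^2\right]\!\left(\tfrac{1}{X}\right),
\]
after which lemma \ref{concmu} translates this into the statement about $(P^{1/n})''$. (Your ``alternative, more computational route'' is exactly this.) Your main argument instead starts from the functional identity $\tilde P^{1/n}(X) = X\,P^{1/n}(1/X)$ on $\Rb_+^*$ and differentiates once more at the level of $g = P^{1/n}$ rather than $P$, obtaining the pleasantly exact relation $(\tilde P^{1/n})''(X) = X^{-3}(P^{1/n})''(1/X)$ in one short calculation. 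This avoids the degree-bookkeeping and makes transparent why the cancellation happens: the prefactor $X$ is precisely the $n$-th root of $X^n$ and is what makes the lower-order terms of $f''$ drop out. The trade-off is that your argument requires $P>0$ on $\Rb_+^*$ from the outset so that $P^{1/n}$ is a genuine real function, whereas the paper's intermediate identity is a polynomial identity valid without any positivity hypothesis; for the concavity statement being proved this extra hypothesis is harmless since it is implicit in speaking of $\tfrac{1}{n}$-concavity at all, and you flagged it correctly. One small remark: the paper's displayed identity omits the evaluation of the bracket at $1/X$, which you handle explicitly — your formulation is the precise one.
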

\begin{proof}
It is a simple computation:
\begin{align*}
\tilde{P}' &= nX^{n-1}P \left(\frac{1}{X}\right) - X^{n-2}P'\left(\frac{1}{X}\right), \\
\tilde{P}'' &= n(n-1)X^{n-2}P \left(\frac{1}{X}\right) - 2(n-1)X^{n-3} P'\left(\frac{1}{X}\right) + X^{n-4}P''\left(\frac{1}{X}\right),
\end{align*}
so
\[\tilde{P}\tilde{P}'' + \left(\frac{1}{n}-1\right) \tilde{P}'^2 = X^{2(n-2)}\left[PP'' + \left(\frac{1}{n}-1\right)P'^2\right].\]
\end{proof}
We know that $P$ is real-rooted if and only if $\tilde{P}$ is, so it would not be surprising that both equivalences of the $\frac{1}{n}$-concavity and the real-rootedness are related to each other.

However, we can show this conjecture for $\deg P \leq 3$.
\begin{thm} \label{thmdeg123}
Let $P \in \Rb_+[X]$, $\deg P \leq 2$. Then
\[(P\ref{P1}) \qquad \Longleftrightarrow \qquad (P\ref{P2}) \qquad \Longleftrightarrow \qquad (P\ref{P3}).\]

Let $P \in \Rb_+[X]$, $\deg P = 3$. Then
\[(P\ref{P1}) \qquad \Longleftrightarrow \qquad (P\ref{P2}).\]
\end{thm}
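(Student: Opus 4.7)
The plan is, starting from the fact that $(P\ref{P1}) \Rightarrow (P\ref{P2}) \Rightarrow (P\ref{P3})$ is already granted by proposition \ref{concroots}, to establish the reverse directions by an explicit computation of the quantity
\[
T := PP'' + \left(\tfrac{1}{n}-1\right)P'^2,
\]
and by reading off from it an algebraic invariant that coincides, up to sign, with the discriminant of $P$.

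For $\deg P \leq 2$ I would dispatch both degrees by hand. In degree $1$ one has $P'' = 0$ and the coefficient $\tfrac{1}{n}-1$ vanishes, so $T \equiv 0$; the polynomial is affine with non-negative coefficients, hence trivially real-rooted and concave. In degree $2$, writing $P = aX^2 + bX + c$, a short expansion yields
\[
2PP'' - P'^2 \;=\; 4ac - b^2,
\]
a constant. Since $T$ is then a single scalar, $(P\ref{P2})$ and $(P\ref{P3})$ become the same statement, and both read $b^2 \geq 4ac$, which is exactly the discriminant condition for $P$ to be real-rooted. This closes the case $\deg P \leq 2$.

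For $\deg P = 3$ the approach is to write $P = aX^3 + bX^2 + cX + d$ and expand
\[
3PP'' - 2P'^2 \;=\; 2\,Q(X), \qquad Q(X) := (3ac-b^2)X^2 + (9ad-bc)X + (3bd-c^2),
\]
so that $(P\ref{P2})$ amounts to $Q \leq 0$ on $\Rb$. The key step, and the only one I expect to be non-routine, will be the algebraic identity
\[
\mathrm{disc}(Q) \;=\; -3\,\Delta(P),
\]
where $\Delta(P) = 18abcd - 4b^3d + b^2c^2 - 4ac^3 - 27a^2d^2$ is the classical discriminant of the cubic $P$; I would check it by a brute expansion and term-by-term matching. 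Granting the identity, the condition $Q \leq 0$ on $\Rb$ splits into two subcases: either $3ac - b^2 < 0$ together with $\mathrm{disc}(Q) \leq 0$, which via the identity forces $\Delta(P) \geq 0$ and hence three real roots; or $3ac - b^2 = 0$, in which case the remaining conditions $9ad - bc = 0$ and $3bd - c^2 \leq 0$ collapse (using $a > 0$) to $c = b^2/(3a)$, $d = b^3/(27a^2)$, so that $P = a\bigl(X + \tfrac{b}{3a}\bigr)^3$ is a perfect cube. In both situations $P$ is real-rooted, and the non-negativity of the coefficients automatically gives non-positive roots, restoring $(P\ref{P1})$.

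The hard part is thus the discriminant identity $\mathrm{disc}(Q) = -3\,\Delta(P)$; once it is established, everything else reduces to an elementary case analysis on the sign of a quadratic polynomial. It is also at exactly this point that the method stops scaling: for $\deg P \geq 4$ the analogue of $Q$ is itself a polynomial of degree $\geq 2$ and no single scalar invariant of $P$ controls its sign on $\Rb$, which is precisely why conjecture \ref{conj3} is left open.
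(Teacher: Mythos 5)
Your proposal is correct and uses the same essential idea as the paper: the degree-$2$ polynomial $nPP'' - (n-1)P'^2$ has, up to a negative constant factor, the discriminant of $P$ itself as its own discriminant, so the sign constraint $(P2)$ forces real-rootedness of $P$. The one genuine difference is that for $\deg P = 3$ the paper first applies the Cardan/Tschirnhaus substitution to replace $P$ by its depressed cubic $Q = Y^3 + pY + q$ (noting separately that $(P1)$ and $(P2)$ are invariant under this substitution), which shrinks the computation to $QQ'' - \tfrac{2}{3}Q'^2 = 2pY^2 + 6qY - \tfrac{2}{3}p^2$ with discriminant $-\tfrac{4}{3}\Delta_Q$, whereas you keep the general cubic $aX^3 + bX^2 + cX + d$ and verify the identity $\mathrm{disc}(Q) = -3\Delta(P)$ directly. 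Both calculations check out; your route trades a clean two-parameter computation for a somewhat heavier brute-force expansion but avoids having to justify the invariance of the two conditions under the affine shift. Your case analysis for $Q \leq 0$ on $\Rb$ (leading coefficient $3ac - b^2 < 0$ with $\mathrm{disc}(Q) \leq 0$, versus the degenerate case $3ac - b^2 = 0$ forcing $P$ to be a perfect cube) is complete and correct, and you are right that this method does not extend to $\deg P \geq 4$, since the analogue of $Q$ then has degree $\geq 4$ and is no longer controlled by a single sign condition.
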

\begin{proof}

\begin{itemize}
\item For $n=1$, $(P\ref{P1})$, $(P\ref{P2})$ and $(P\ref{P3})$ are always true.
\item For $n=2$, let $P = a_0 + a_1 X + a_2 X^2$, and $\Delta_P := a_1^2 - 4a_0 a_2$ its discriminant. Then
\begin{align*}
P' &= a_1 + 2 a_2 X, \\
P'' &= 2 a_2, \\
PP'' + \left(\frac{1}{2}-1\right)P'^2 &= - \frac{\Delta_P}{2}.
\end{align*}
Hence
\[(P\ref{P1}) \quad \Longleftrightarrow \quad \Delta_P \geq 0 \quad \Longleftrightarrow \quad (P\ref{P2}) \quad \Longleftrightarrow \quad (P\ref{P3}).\]
\item For $n=3$, let $P = a_0 + a_1 X + a_2 X^2 + a_3 X^3$. We apply the Cardan method, setting
\begin{align*}
Y &:= X + \dfrac{a_2}{3a_3}, \\
Q &:= Y^3 + pY + q,
\end{align*}
with
\[p := \frac{a_1}{a_3} - \frac{a_2^2}{3a_3^2}, \qquad \qquad q := \frac{a_0}{a_3} - \frac{a_1a_2}{3a_3^2} + \frac{2a_2^3}{27a_3^3},\]
and
\begin{align*}
\Delta_Q &:= -(4p^3 + 27 q^2) \\
&= a_1^2a_2^2 + 18a_0a_1a_2a_3 - 27a_0^2a_3^2 - 4a_1^3a_3 - 4a_2^3a_0
\end{align*}
the discriminant of $Q$.

Then $P(X) = a_3 Q\left(X + \dfrac{a_2}{3a_3}\right)$, so $P$ is real-rooted if and only if $Q$ is. Thus
\[(P\ref{P1}) \quad \Longleftrightarrow \quad (Q\ref{P1}) \quad \Longleftrightarrow \quad \Delta_Q \geq 0.\]
Moreover,
\[\left[PP'' + \left(\frac{1}{3}-1\right) P'^2\right](X) = a_3^2\left[QQ'' + \left(\frac{1}{3}-1\right) Q'^2\right]\left(X + \dfrac{a_2}{3a_3}\right),\]
so
\[(P\ref{P2}) \quad \Longleftrightarrow \quad (Q\ref{P2}).\]

Let us compute
\begin{align*}
Q' &= 3Y^2 + p, \\
Q'' &= 6Y, \\
QQ'' + \left(\frac{1}{3}-1\right) Q'^2 &= 2pY^2 + 6qY - \frac{2}{3}p^2.
\end{align*}
We compute the discriminant of this polynomial of degree 2:
\[\delta_Q = 36q^2 + \frac{16}{3}p^3 = -\frac{4}{3}\Delta_Q.\]
Let us suppose that $(Q\ref{P2})$ is true, ie. $2pY^2 + 6qY - \frac{2}{3}p^2 \leq 0$ on $\Rb$. Necessarily, $p \leq 0$ and $\delta_Q \leq 0$, so $\Delta_Q \geq 0$, and $(Q\ref{P1})$.
\end{itemize}
\end{proof}

\begin{rem}
The reverse implication $(P\ref{P3}) \Longrightarrow (P\ref{P2})$ is not true anymore from $\deg P = 3$ on. Indeed, let
\[P = X^3 + X^2 + \dfrac{1}{3}X.\]
Then $P = X\left(X^2 + X + \dfrac{1}{3}\right)$ is not real-rooted. However,
\[PP'' + \left(\dfrac{1}{3}-1\right) P'^2 = -\dfrac{2}{27}(3X+1).\]
Hence $\left(P^{1/3}\right)'' \equiv -\dfrac{2}{27}(3X+1) < 0$ on $\Rb_+^*$.
\end{rem}

\subsection{Real-rooted polynomials}

\subsubsection{Number of real roots}
In order to prove that $P$ is real-rooted, we used the convenient property of polynomials of small degrees: their real-rootedness is equivalent to the positivity of one single quantity, the discriminant. This simple tool cannot be used anymore for $\deg P \geq 4$. A discriminant can always be defined, and its positivity is a necessary condition for the polynomial to be real-rooted, but not sufficient anymore. The discriminant is an algebraic expression in the coefficients of the polynomial, which is positive when the polynomial is real-rooted with simple roots, vanishes when two roots are equal, and changes its sign when a pair of real roots becomes strictly complex or conversely. So the sign of the discriminant only gives the number of real roots modulo 4.

The complete determination of the number of real roots needs more complex discrimination systems; see for instance \cite{Yang96}, \cite{Yang97}, \cite{Liang99}.

Before those computational theorems, there were classical theorems to determine the number of real roots of a polynomial. Sturm's theorem (1829), one of the oldest, gives the number of roots in a given real interval, using analytic arguments. Here we present two others with no interval restriction and algebraic tools.

\subsubsection{Algebraic characterisations}

\begin{thm}[1968, \cite{Coste03}, \cite{Dieu80}]
Let $P \in \Rb[X]$, $\deg P = n$, and
\[L(P,X,Y) := \dfrac{P(X)P'(Y) - P(Y)P'(X)}{X-Y} \in \Rb[X,Y]\]
a symmetric polynomial with coefficients
\[L(P,X,Y) =: \sum_{i,j = 1}^{n}l_{ij}X^{i-1}Y^{j-1}.\]
We introduce the quadratic form, applied to a vector $v=(v_1,\ldots,v_n)$,
\[Q(P,v) := \sum_{i,j = 1}^{n}l_{ij}v_iv_j.\]
Let $(s,t)$ be the signature of $Q$. Then the number of distinct real roots of $P$ is $s-t$, and the number of distinct complex roots is $s+t$.

Hence $P$ is real-rooted if and only the signature of $Q$ is $(s,0)$.
\end{thm}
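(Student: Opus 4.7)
The plan is to diagonalise the Bezoutian quadratic form using Lagrange interpolation at the roots of $P$. First assume $P$ has $n$ distinct (possibly complex) roots $\alpha_1, \ldots, \alpha_n$, and introduce the Lagrange basis polynomials
\[\ell_k(X) = \dfrac{P(X)}{(X-\alpha_k)\, P'(\alpha_k)}, \qquad k = 1, \ldots, n.\]
The key identity I would establish is
\[L(P,X,Y) \;=\; \sum_{k=1}^n P'(\alpha_k)^2\, \ell_k(X)\, \ell_k(Y),\]
verified by noting that both sides are polynomials of degree at most $n-1$ in each of $X$ and $Y$, and agree on the $n^2$ grid $(\alpha_i, \alpha_j)$: the right-hand side equals $P'(\alpha_k)^2 \delta_{ik}\delta_{jk}$, while the left-hand side vanishes for $i \neq j$ (both $P(\alpha_i)$ and $P(\alpha_j)$ are zero) and equals $P'(\alpha_k)^2$ at $X = Y = \alpha_k$ by the general formula $L(P,Y,Y) = P'(Y)^2 - P(Y)P''(Y)$. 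Thus the matrix $(l_{ij})$ is congruent over $\Cb$ to $\mathrm{diag}(P'(\alpha_1)^2, \ldots, P'(\alpha_n)^2)$.

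Next, I would compute the signature of $Q$ as a \emph{real} quadratic form. Each real simple root $\alpha_k$ contributes $+1$ since $P'(\alpha_k)^2 > 0$. For each complex conjugate pair $(\beta, \bar\beta)$, pass from the complex basis $\{\ell_\beta, \ell_{\bar\beta}\}$ to the real basis $\bigl\{(\ell_\beta + \ell_{\bar\beta})/\sqrt{2},\; -i(\ell_\beta - \ell_{\bar\beta})/\sqrt{2}\bigr\}$ of the associated real two-dimensional subspace. Writing $P'(\beta)^2 = u + iv$, a direct computation shows the restricted Gram matrix is
\[\begin{pmatrix} u & v \\ v & -u \end{pmatrix},\]
of determinant $-(u^2+v^2) = -|P'(\beta)|^4 < 0$, giving signature $(1,1)$. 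Summing, if $P$ has $r_{\Rb}$ distinct real roots and $r_{\Cb}$ conjugate complex pairs, then $Q$ has signature $(s,t) = (r_{\Rb} + r_{\Cb},\; r_{\Cb})$, whence $s - t = r_{\Rb}$ (the number of distinct real roots) and $s + t = r_{\Rb} + 2 r_{\Cb}$ (the total number of distinct roots in $\Cb$).

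It remains to extend the result to polynomials with multiple roots. The cleanest route is to establish directly the rank formula $\mathrm{rank}(Q) = n - \deg\gcd(P, P')$, which equals the number of distinct roots of $P$, by identifying the kernel of the bilinear form $L(P,\cdot,\cdot)$ on polynomials of degree less than $n$ with the multiples of $P/\gcd(P,P')$ of appropriate degree. Passing to the quotient, the non-degenerate part of $Q$ coincides with the Bezoutian form attached to the squarefree part of $P$, which has simple roots; the first two steps then apply verbatim.

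The main obstacle is precisely this last step. Since the signature is not a continuous invariant when roots collide, a naive density argument on simple-rooted polynomials cannot conclude. What is required is either an explicit algebraic description of the kernel of the Bezoutian via Bezout/resultant-type relations, or a factorisation identity reducing $L(P, X, Y)$ to the Bezoutian of the squarefree part; once this is in hand, the signature formula transfers from the simple-root case without further work.
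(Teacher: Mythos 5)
The paper does not actually prove this theorem; it is cited as an external result of Hermite--Jacobi type (references to Coste and Dieudonn\'e), so there is no internal proof to compare against. Judged on its own merits, your Lagrange-interpolation diagonalisation is the correct standard argument for the case of simple roots: both sides of
\[L(P,X,Y) \;=\; \sum_{k=1}^n P'(\alpha_k)^2\, \ell_k(X)\, \ell_k(Y)\]
have degree at most $n-1$ in each variable and agree on the $n\times n$ grid of roots, and your computation of the $(1,1)$ signature contribution from a complex conjugate pair, via the Gram matrix $\begin{pmatrix} u & v \\ v & -u \end{pmatrix}$ of determinant $-|P'(\beta)|^4<0$, is exactly right.

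The concrete gap is in your treatment of multiple roots, and it goes slightly beyond what you flag. Your proposed identification of the kernel of the form with the multiples of $P/\gcd(P,P')$ of degree $<n$ is false. For $P=X^2$ one computes $L(P,X,Y)=2XY$, whose kernel in the monomial basis $\{1,X\}$ is spanned by the constant polynomial, whereas $P/\gcd(P,P')=X$; for $P=X^2(X-1)$ one finds $L(P,X,Y)=XY(3XY-2X-2Y+2)$, whose kernel is again the constants, while $P/\gcd=X^2-X$. The rank identity $\operatorname{rank}(Q)=n-\deg\gcd(P,P')$ is nonetheless a correct and classical fact about the Bezoutian (one clean route is Barnett's factorisation relating $\operatorname{Bez}(P,P')$ to $P'(C_P)$ for the companion matrix $C_P$, whose kernel dimension on each Jordan block of size $m$ is $m-1$). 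Likewise, ``coincides with the Bezoutian of the squarefree part'' is too strong: already for $P=X^2(X-1)$ the reduced form is $\begin{pmatrix} 2 & -2 \\ -2 & 3 \end{pmatrix}$ while the squarefree Bezoutian is $\begin{pmatrix} 1 & -1 \\ -1 & 2 \end{pmatrix}$; these are congruent but not equal, and producing that congruence is precisely the missing content. To close the argument you need either the kernel description corrected and the congruence exhibited, or a direct adaptation of the Lagrange diagonalisation to repeated roots using confluent (Hermite) interpolation, which handles the Jordan blocks without a separate reduction step.
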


In our particular case, all the coefficients of $P$ are supposed to be non-negative, hence all the real roots have to be non-positive. With this restriction, an older theorem holds.

\begin{defi}
A sequence $(a_0, a_1, a_2, \ldots) \in \Rb^\Nb$ is said to be \emph{totally positive} (or a \emph{P\'olya frequency sequence}) if and only if all the minors of the infinite matrix
\[\begin{pmatrix}
a_0 & 0 & 0 & \ldots \\
a_1 & a_0 & 0 & \ldots \\
a_2 & a_1 & a_0 & \ldots \\
\vdots & \vdots & \vdots & \ddots
\end{pmatrix}\]
are non-negative.
\end{defi}

\begin{thm}[1951, \cite{Aissen51}]
Let $P = a_0 + a_1 X + \ldots + a_n X^n \in \Rb_+[X]$, with $a_0 \neq 0$. Then $P$ is real-rooted if and only if the sequence $(a_0, a_1, \ldots, a_n, 0, 0, \ldots)$ is totally positive.
\end{thm}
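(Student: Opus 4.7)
The plan is to treat the two implications separately. The forward direction (real-rootedness implies total positivity) is algebraic and follows from a factorisation of $P$ combined with the Cauchy--Binet formula; the reverse is the deep content of the theorem and is where the real work lies.

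For the forward implication I would factor $P = a_n \prod_{k=1}^n (X + \mu_k)$ with $\mu_k \geq 0$, and associate to each polynomial $Q$ the infinite lower-triangular Toeplitz matrix $T(Q)$ whose first column is the coefficient sequence of $Q$ padded with zeros. The map $Q \mapsto T(Q)$ is a ring morphism, so $T(P) = a_n \prod_k T(X + \mu_k)$. Each elementary factor $T(X+\mu_k)$ has only two non-zero diagonals, of entries $\mu_k$ and $1$; a direct inspection shows that all of its minors are non-negative (any $r \times r$ submatrix is either upper-triangular with non-negative diagonal, or contains two proportional rows and has vanishing determinant). The Cauchy--Binet formula then expresses every minor of $T(P)$ as a sum of products of minors of the factors, hence as a non-negative quantity, giving exactly the definition of a totally positive sequence.

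For the reverse implication one has to show that total positivity of the coefficient sequence forces all roots of $P$ to be real. My approach would be to invoke the Schoenberg--Edrei structure theorem for P\'olya frequency sequences: the generating function of any PF sequence factors as $C z^m e^{\gamma z} \prod_i (1 + \alpha_i z) / \prod_j (1 - \beta_j z)$ with $\alpha_i, \beta_j, \gamma \geq 0$. Specialising to our finite-support polynomial sequence (with $a_0 \neq 0$, so $m = 0$) forces the exponential factor and the denominator to disappear and bounds the number of numerator factors by $n$; this exactly exhibits $P$ as a product of linear factors with non-positive roots.

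The main obstacle is precisely this reverse direction. The Schoenberg--Edrei structure theorem is not elementary: it rests on the theory of entire functions of finite order and the Hadamard factorisation. A purely finite-dimensional proof is possible, chaining the $2 \times 2$ minor inequalities $a_k^2 \geq a_{k-1} a_{k+1}$ together with the higher-order Newton-type inequalities coming from larger minors into something strong enough to exclude any strictly complex pair of roots, but every such argument I know is noticeably more involved than the forward direction. That asymmetry is why this theorem is typically cited rather than reproved, and why I would be content to apply it as a black box here.
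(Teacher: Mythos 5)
The paper does not prove this theorem; it is cited directly from \cite{Aissen51} and used as a black box, which is also what you ultimately propose to do, so in practice the two treatments coincide.

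Your sketch of a proof is serviceable but has two rough spots. In the forward direction, the case analysis of the $r \times r$ submatrices of $T(X+\mu_k)$ is not quite right: among the non-vanishing ones there are lower-triangular and diagonal submatrices as well as upper-triangular ones, so the dichotomy you state does not hold as written. The conclusion survives via a cleaner observation: because the row and column index sets defining a minor are strictly increasing while the nonzero entries of a lower bidiagonal matrix sit only on two adjacent diagonals, at most the identity permutation of the restricted index sets can contribute, so every minor is either zero or a product of non-negative entries. With that fixed, the ring-morphism property of $Q \mapsto T(Q)$ plus Cauchy--Binet close the forward implication as you describe. In the reverse direction, be aware that the Edrei factorisation theorem for P\'olya frequency sequences post-dates Aissen--Schoenberg--Whitney and, in the classical treatments, uses the polynomial case as an ingredient, so invoking it here risks being circular rather than merely heavy machinery; a genuinely self-contained proof would have to argue directly at the polynomial level that a pair of strictly complex conjugate roots forces some finite minor of the Toeplitz matrix to be negative, as you anticipate. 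Since you ultimately treat the equivalence as a black box, exactly as the paper does, this is a caveat to note rather than a defect in what you are using the result for.
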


\subsubsection{Log-concavity}
Now we present a necessary criterion for a polynomial to be real-rooted. It is based on Newton's inequalities.
\begin{defi}
A sequence $(a_0, a_1, a_2 \ldots) \in \Rb^\Nb$ is said to be \emph{log-concave} if and only if, for all $1 \leq k$,
\[a_{k-1} a_{k+1} \leq a_k^2.\]

A polynomial $P = a_0 + a_1 X + \ldots + a_n X^n \in \Rb[X]$ is said to be log-concave if and only if the sequence $(a_0, a_1, \ldots, a_n, 0, 0, \ldots)$ is log-concave.
\end{defi}

\begin{thm}[1707, Newton]
Let $\mu = (\mu_1, \mu_2, \ldots, \mu_n) \in \Rb_+^n$, and $\sigma_k := \sigma_k(\mu)$ for $0 \leq k \leq n$. Then for $1 \leq k \leq n-1$,
\[\dfrac{\sigma_{k-1}}{\binom{n}{k-1}} \dfrac{\sigma_{k+1}}{\binom{n}{k+1}} \leq \left(\dfrac{\sigma_k}{\binom{n}{k}}\right)^2.\]
This implies the weaker inequality:
\[\sigma_{k-1}\sigma_{k+1} \leq \left(\sigma_k\right)^2.\]
\end{thm}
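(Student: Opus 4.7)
The strategy is to reduce Newton's inequality (at each fixed $k$) to the non-negativity of the discriminant of a suitable degree-$2$ real-rooted polynomial. The reduction uses only two operations that preserve real-rootedness, so the proof becomes entirely elementary once these operations are composed correctly.

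The starting point is the generating polynomial
\[P(X) = \prod_{i=1}^n (1 + \mu_i X) = \sum_{j=0}^n \sigma_j X^j.\]
When all $\mu_i > 0$, this polynomial has exactly $n$ real non-positive roots, namely $-1/\mu_i$. The general case (some $\mu_i = 0$) follows by a continuity argument, since Newton's inequality is a closed condition: we may replace each $\mu_i$ by $\mu_i + \varepsilon$ and let $\varepsilon \to 0^+$. So we may assume $P$ is real-rooted of degree exactly $n$. The two preservation operations in play are differentiation (by Rolle's theorem, the derivative of a real-rooted polynomial of degree $d$ is real-rooted of degree $d-1$) and passage to the reciprocal $R \mapsto \tilde{R}$ with $\tilde{R}(X) = X^{\deg R}\,R(1/X)$, whose preservation is already invoked in proposition \ref{propmanu}.

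Composing these, we first differentiate $P$ exactly $k-1$ times; this yields a real-rooted polynomial of degree $n-k+1$ whose constant, linear and quadratic coefficients are proportional to $\sigma_{k-1}, \sigma_k, \sigma_{k+1}$ respectively. Passage to its reciprocal flips these three quantities to the \emph{top} three coefficients (those of $X^{n-k+1}, X^{n-k}, X^{n-k-1}$). Finally, differentiating $n-k-1$ more times reduces the degree to $2$ and kills all other coefficients. A routine factorial bookkeeping then yields the real-rooted quadratic
\[Q(X) = \frac{(k-1)!(n-k+1)!}{2}\,\sigma_{k-1}\,X^2 + k!(n-k)!\,\sigma_k\,X + \frac{(k+1)!(n-k-1)!}{2}\,\sigma_{k+1}.\]
The non-negativity of its discriminant, after dividing through by $(n!)^2$ and re-assembling binomial coefficients, is exactly the claimed inequality. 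The weaker consequence $\sigma_{k-1}\sigma_{k+1} \leq \sigma_k^2$ follows at once, since $\binom{n}{k}^2 / \bigl( \binom{n}{k-1}\binom{n}{k+1} \bigr) = (k+1)(n-k+1)/[k(n-k)] \geq 1$.

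The only mildly delicate step is the factorial bookkeeping needed to identify the three coefficients of $Q$, but this is mechanical; the conceptual content of the argument lies entirely in the preservation of real-rootedness under differentiation and reciprocation.
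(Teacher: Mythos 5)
Your proof is correct, and it is the classical argument: reduce to a degree-$2$ discriminant by repeatedly applying the two real-rootedness-preserving operations (differentiation, via Rolle, and reciprocation) to the generating polynomial $\prod_i(1+\mu_iX)$. The paper, however, does not give a proof of this theorem at all---it cites it as a classical result of Newton (1707) and moves on---so there is nothing in the source to compare your argument against. Two small remarks: your attribution of the reciprocation step to proposition~\ref{propmanu} is slightly off, since that proposition concerns preservation of $\frac{1}{n}$-concavity rather than of real-rootedness (the latter fact is only mentioned informally in the surrounding text, and in any case requires the constant term to be nonzero, which your $\varepsilon$-perturbation handles); and the phrase ``kills all other coefficients'' would be better phrased as ``discards the lower-order terms,'' since it is the composition of the two operations that makes the degree-$2$ output depend only on the three coefficients you track. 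Neither point affects the validity of the argument.
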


\begin{cor} \label{corlogconc}
Let $P \in \Rb_+[X]$. If $P$ is real-rooted, then $P$ is log-concave.
\end{cor}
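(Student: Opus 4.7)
The plan is to derive log-concavity directly from Newton's inequality by reading the coefficients of $P$ as (rescaled) elementary symmetric polynomials in the roots.

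First I would observe that since $P \in \Rb_+[X]$ has non-negative coefficients and is real-rooted, every root must in fact be non-positive: any strictly positive $r$ would yield $P(r) > 0$ as a sum of non-negative terms with the positive leading contribution $a_n r^n$, and hence cannot be a root. Consequently one can write
\[P = a_n (X+\mu_1)(X+\mu_2)\cdots(X+\mu_n)\]
with $\mu = (\mu_1,\ldots,\mu_n) \in \Rb_+^n$ and $a_n > 0$. Expanding the product, the coefficient of $X^k$ is exactly $a_n\,\sigma_{n-k}(\mu)$, so that $a_k = a_n\,\sigma_{n-k}(\mu)$ for every $0 \leq k \leq n$.

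Next, I would feed this identification into Newton's theorem applied at the index $n-k$, for $1 \leq k \leq n-1$:
\[\sigma_{n-k-1}(\mu)\,\sigma_{n-k+1}(\mu) \leq \sigma_{n-k}(\mu)^2.\]
Multiplying both sides by $a_n^2 > 0$ and substituting the coefficient-root dictionary yields precisely
\[a_{k-1}\,a_{k+1} \leq a_k^2,\]
which is the desired log-concavity inequality at the index $k$. For the boundary index $k=n$ of the extended sequence, the inequality reads $a_{n-1}\cdot 0 \leq a_n^2$, trivially true, and for $k > n$ both sides vanish; so the full sequence $(a_0, a_1, \ldots, a_n, 0, 0, \ldots)$ is log-concave.

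There is essentially no obstacle here: the corollary is a direct translation of Newton's inequality through the root-coefficient correspondence. The only small care concerns the degenerate situations (some $\mu_i = 0$, or repeated roots), but these are handled transparently by the same expansion since the $\sigma_j(\mu)$ remain non-negative and Newton's inequality holds on all of $\Rb_+^n$.
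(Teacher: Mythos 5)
Your proposal is correct and is precisely the argument the paper leaves implicit: the corollary appears immediately after Newton's theorem with no written proof, and you supply the expected unwinding (non-negative coefficients force all roots non-positive, factor $P = a_n\prod_i(X+\mu_i)$ with $\mu \in \Rb_+^n$, identify $a_k = a_n\sigma_{n-k}(\mu)$, and apply the weak Newton inequality $\sigma_{j-1}\sigma_{j+1}\leq\sigma_j^2$ at $j=n-k$). Your handling of the boundary and degenerate cases is also sound.
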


\subsubsection{Kurtz criterion}
Log-concavity however is not sufficient for a polynomial to be real-rooted. But some criteria are sufficient conditions, such as Kurtz's criterion which is a sort of stronger log-concavity.

\begin{thm}[1992, \cite{Kurtz92}]
Let $P = a_0 + a_1 X + \ldots + a_n X^n \in \Rb_+[X]$. If for all $1 \leq k \leq n-1$
\[4a_{k-1}a_{k+1} < a_k^2,\]
then all the roots of $P$ are real and distinct.
\end{thm}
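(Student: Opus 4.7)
My approach is to prove Kurtz's criterion by a direct sign-change argument for $P$ on the negative real axis, locating one simple real root in each of $n$ disjoint intervals.

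A preliminary reformulation: the Kurtz hypothesis $4a_{k-1}a_{k+1} < a_k^2$ is equivalent to $r_k < r_{k-1}/4$, where $r_k := a_{k+1}/a_k$, so the successive coefficient ratios strictly decrease by a factor greater than four. This forces every inner coefficient to be strictly positive: if $a_k = 0$ for some $1 \leq k \leq n-1$, the Kurtz inequality would reduce to $4 a_{k-1} a_{k+1} < 0$, impossible. Hence all $r_k$ are well-defined and positive.

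I would then introduce the $n-1$ test points
\[x_k := -\sqrt{a_{k-1}/a_{k+1}}, \qquad k = 1, \ldots, n-1.\]
Since $|x_{k+1}|^2/|x_k|^2 = r_{k-1}/r_{k+1} > 16$, they satisfy the strict ordering $0 > x_1 > x_2 > \cdots > x_{n-1}$. The heart of the argument is the \emph{dominance estimate}: at $x = x_k$, the monomial $a_k x_k^k$ strictly exceeds in absolute value the sum of all remaining monomials, so that $\operatorname{sign} P(x_k) = \operatorname{sign}(a_k x_k^k) = (-1)^k$. Here the two nearest neighbor ratios $|a_{k\pm 1} x_k^{k\pm 1}|/|a_k x_k^k|$ both equal $\sqrt{a_{k-1}a_{k+1}}/a_k < 1/2$ by Kurtz, and iterating Kurtz shows that farther neighbors contribute through a geometric-type series with successive decay factor $< 1/4$.

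Once the signs $\operatorname{sign} P(x_k) = (-1)^k$ are established, combining them with $P(0) = a_0 > 0$ and $P(x) \sim a_n x^n \to (-1)^n \cdot \infty$ as $x \to -\infty$ produces a sign change of $P$ on each of the $n$ disjoint intervals
\[(-\infty, x_{n-1}), \ (x_{n-1}, x_{n-2}), \ \ldots, \ (x_2, x_1), \ (x_1, 0).\]
The intermediate value theorem then yields $n$ distinct negative real roots of $P$; since $\deg P = n$, these account for all its roots.

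\textbf{Main obstacle.} The sharp form of the dominance estimate. When $\sqrt{a_{k-1}a_{k+1}}/a_k$ is close to $1/2$, the pair of immediate neighbors nearly saturates the mass of the $k$-th term at $x_k$, and one must guarantee that the more distant neighbors do not close the remaining gap. The clean way forward is probably either a finer choice of test points of the form $x_k = -c_k\sqrt{a_{k-1}/a_{k+1}}$ with $c_k$ tuned to the local decay of the $r_k$, or an argument that exploits the sign alternation between successive distant neighbors in the sum defining $P(x_k)$. In either case the strict inequality in the Kurtz hypothesis enters essentially, which is why the conclusion delivers only simple roots.
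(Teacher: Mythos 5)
The paper cites this theorem from Kurtz's 1992 paper without reproducing a proof, so there is no internal argument to compare your sketch against; I evaluate it on its own terms. Your setup is correct — the positivity of all interior coefficients, the test points $x_k = -\sqrt{a_{k-1}/a_{k+1}}$ with $|x_{k+1}| > 4|x_k|$, and the observation that both nearest-neighbor ratios at $x_k$ equal $\rho_k := \sqrt{a_{k-1}a_{k+1}}/a_k < 1/2$. But, as you candidly flag, the dominance estimate does not close by the triangle inequality alone: the Kurtz cascade gives the term at index $k\pm j$ a ratio bounded by roughly $\rho_k^{j}/4^{\binom{j}{2}}$, and summing both sides produces a bound near $2\rho_k(1 + \rho_k/4 + \cdots)$, which exceeds $1$ as $\rho_k \to 1/2$. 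So the crude absolute-value estimate leaves a genuine gap, and you stop short of filling it.

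The fix is precisely the sign-alternation route you allude to but do not carry out. Write $y_k := |x_k| = 1/\sqrt{r_{k-1}r_k}$, so that $(-1)^k P(-y_k)/(a_k y_k^k)$ has a central contribution $1 - 2\rho_k > 0$ from indices $k-1, k, k+1$, and the remaining terms alternate in sign with index parity. Pair each residual same-parity (positive-sign) index $m$ with the opposite-parity index one step further from $k$: on the right, $(a_m, a_{m+1})$ for $m = k+2, k+4, \ldots$; on the left, $(a_m, a_{m-1})$ for $m = k-2, k-4, \ldots$, padding with zero coefficients at the ends. In each pair the farther term is strictly smaller, since $a_{m+1}y_k/a_m = r_m y_k \le \rho_k/4^{m-k} < 1$ and $a_{m-1}/(a_m y_k) = 1/(r_{m-1}y_k) \le \rho_k/4^{k-m} < 1$, so each pair contributes a nonnegative quantity. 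This makes $\operatorname{sign}P(x_k) = (-1)^k$ unconditional, after which your IVT count of $n$ sign changes is sound. As written, the proposal is incomplete; adding this one pairing lemma completes it.
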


On the other hand, there is no weaker log-concavity-like condition of the same form that works.
\begin{thm}[1992, \cite{Kurtz92}]
For all $\varepsilon > 0$, and $n \leq 2$, there is a polynomial $P = a_0 + a_1 X + \ldots + a_n X^n \in \Rb_+[X]$ which satisfies
\[(4-\varepsilon) a_{k-1}a_{k+1} < a_k^2\]
for all $1 \leq k \leq n-1$ and which has some non-real roots.
\end{thm}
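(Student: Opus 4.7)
The statement almost surely contains a typographical slip: ``$n\le 2$'' should read ``$n\ge 2$'' (for $n=1$ the hypothesis is vacuous and non-real roots are impossible for a real polynomial). My plan is to construct an explicit degree-$n$ counterexample for each $n\ge 2$. The case $n=2$ is immediate: take $P(X)=X^2+\sqrt{4-\varepsilon/2}\,X+1$. Then $a_1^2=4-\varepsilon/2>(4-\varepsilon)a_0a_2$, and the discriminant $-\varepsilon/2<0$ gives a pair of complex conjugate roots.

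For $n\ge 3$ the idea is to exploit the ``$q$-like'' real-rooted polynomial $S_R(X):=\prod_{i=2}^{n-1}(1+R^iX)=\sum_k s_kX^k$, whose coefficients satisfy $s_k\sim R^{\alpha_k}$ with $\alpha_k:=k(2n-k-1)/2$. Since a direct computation gives $2\alpha_k-\alpha_{k-1}-\alpha_{k+1}=1$, one gets $s_k^2/(s_{k-1}s_{k+1})\to R$ as $R\to\infty$, so the log-concavity of $S_R$ can be made as strong as desired. I would then replace the two lowest-exponent real linear factors by a complex-rooted quadratic with the same leading behaviour, considering
\[
\tilde P_R(X):=\bigl(1+b_RX+RX^2\bigr)\, S_R(X),\qquad b_R:=2\sqrt{R}\,(1-\eta),
\]
with $R$ large and $\eta\in(0,1)$ small. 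The quadratic factor has discriminant $b_R^2-4R=-4R\eta(2-\eta)<0$, so $\tilde P_R$ has a pair of non-real roots. The coefficients are $a_k=s_k+b_Rs_{k-1}+Rs_{k-2}$; comparing orders in $R$, one finds that $s_k$ dominates for $0\le k\le n-2$, while $a_{n-1}\sim b_Rs_{n-2}$ (as $s_{n-1}=0$) and $a_n=Rs_{n-2}$. Consequently
\[
\frac{a_k^2}{a_{k-1}a_{k+1}}\sim R\quad(1\le k\le n-2),\qquad \frac{a_{n-1}^2}{a_{n-2}a_n}\longrightarrow \frac{b_R^2}{R}=4(1-\eta)^2.
\]
It then suffices to choose $\eta>0$ with $4(1-\eta)^2>4-\varepsilon$ (e.g.\ $\eta=\varepsilon/16$) and $R$ large enough to absorb the error terms.

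The main obstacle is the ``edge'' computation at $k=n-1$, where the log-concavity ratio is tuned just above $4-\varepsilon$ and one must verify that the $O(1/R)$ corrections coming from the non-leading monomials in each $s_k$ cannot push the ratio below the threshold. Away from the edge, the ratios grow linearly in $R$, so the inequality $a_k^2>(4-\varepsilon)a_{k-1}a_{k+1}$ is automatic; the entire content of the proof is therefore in balancing $\eta$ against the asymptotic corrections at this single edge.
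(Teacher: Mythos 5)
The paper does not contain a proof of this statement: it is cited directly from Kurtz's 1992 article, so there is no ``paper's own proof'' to compare against. Your reading of ``$n\leq 2$'' as a typo for ``$n\geq 2$'' is correct (a real degree-one polynomial has its root on the real line, so the statement would be vacuous or false for $n\leq 1$). Since it is the only statement in the paper left unproved and un-reproduced, an independent construction is welcome, and yours is sound: the $n=2$ quadratic is a clean counterexample, the exponent identity $2\alpha_k-\alpha_{k-1}-\alpha_{k+1}=1$ for $\alpha_k=k(2n-k-1)/2$ checks out, the dominant-term analysis $a_k\sim s_k$ for $k\leq n-2$, $a_{n-1}\sim b_R s_{n-2}$, $a_n=Rs_{n-2}$ is correct, and the factor $1+b_RX+RX^2$ has discriminant $-4R\eta(2-\eta)<0$, so $\tilde P_R$ indeed has a conjugate pair of non-real roots while keeping all coefficients positive. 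The ``main obstacle'' you flag at $k=n-1$ is not actually a gap: once $\eta$ is fixed with $4(1-\eta)^2>4-\varepsilon$, the margin is a fixed positive number, the correction terms in each $s_k$ are $O(1/R)$ uniformly in $k$ (replacing the smallest chosen exponent $R^{n-k}$ by $R^{n-k-1}$ contributes a relative $1/R$), and the away-from-edge ratios grow without bound; taking $R$ large absorbs everything at once. One tiny point worth a sentence in a full write-up: for $\varepsilon\geq 8$ your $n=2$ example $\sqrt{4-\varepsilon/2}$ is not real, but this costs nothing since the hypothesis $(4-\varepsilon)a_{k-1}a_{k+1}<a_k^2$ is monotone in $\varepsilon$, so it suffices to treat $\varepsilon$ small.
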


Now we can introduce our main problem.

\section{Elementary symmetric polynomials} \label{sectionsigma}

As we explained in introduction, our purpose is to investigate the concavity of the functions $f_{\vec{a}}$. The example of one single $\sigma_k^\alpha, \alpha > 0$, teaches us that the function is ``less concave'' on the diagonal $\Delta$. To make it more precise, let us recall a result made explicit by M. Marcus and L. Lopes.
\begin{thm}[1957, \cite{Marcus57}] \label{concsigmak1surk}
\[\sigma_k \text{ is $\frac{1}{k}$-concave on } \Gamma_k.\]
\end{thm}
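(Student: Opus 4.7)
The plan is to establish the Brunn--Minkowski-type inequality
\[\sigma_k(y+z)^{1/k} \geq \sigma_k(y)^{1/k} + \sigma_k(z)^{1/k}, \qquad y, z \in \Gamma_k,\]
which is equivalent to $\frac{1}{k}$-concavity of $\sigma_k$ on the convex cone $\Gamma_k$ thanks to the $k$-homogeneity of $\sigma_k$: a positive $1$-homogeneous function on a convex cone is concave if and only if it is superadditive.

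To obtain this inequality, I would fix $y \in \Gamma_k$ and study the one-variable polynomial $P(t) := \sigma_k(y + tz) \in \Rb[t]$, of degree $k$. The key claim is that $P$ is real-rooted with non-positive roots whenever $y, z \in \Gamma_k$. For the special case $z = \Ib$ this is elementary: $\sigma_k(y + s\Ib)$ is the coefficient of $X^{n-k}$ in $\prod_{i=1}^n (X + s + y_i)$, and since $\partial_X$ and $\partial_s$ act identically on this product,
\[\sigma_k(y + s\Ib) = \frac{1}{(n-k)!}\,\partial_s^{n-k}\prod_{i=1}^n (s + y_i),\]
so iterated Rolle's theorem applied to the real-rooted polynomial $\prod_i (s + y_i)$ gives real-rootedness in $s$. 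Non-positivity of the roots follows from $y + tz \in \Gamma_k$ for $t \geq 0$ (by convexity of $\Gamma_k$), which forces $P(t) > 0$ on $[0, +\infty)$. Writing $P(t) = \sigma_k(z) \prod_{i=1}^k (t + \mu_i)$ with $\mu_i \geq 0$, the classical super-additivity of the geometric mean
\[\prod_{i=1}^k (1 + \mu_i)^{1/k} \geq 1 + \prod_{i=1}^k \mu_i^{1/k},\]
combined with $P(0) = \sigma_k(y)$ and $P(1) = \sigma_k(y+z)$, yields the Brunn--Minkowski inequality after multiplication by $\sigma_k(z)^{1/k}$.

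The main obstacle is extending real-rootedness of $P$ from $z = \Ib$ to arbitrary $z \in \Gamma_k$: Rolle's theorem handles $\Ib$ at no cost, but the general case is precisely G\aa rding's theorem on hyperbolic polynomials, which is not elementary and only appears in section 4 of the paper. An alternative path (the original Marcus--Lopes approach) avoids hyperbolic polynomial theory entirely: apply Lemma \ref{concmu} to translate the $\frac{1}{k}$-concavity into negative-semidefiniteness of $\sigma_k H(\sigma_k) + (\frac{1}{k} - 1)\, D(\sigma_k) \otimes D(\sigma_k)$ on $\Gamma_k$, then verify this Hessian inequality inductively on $k$ using $\partial_i \sigma_k = \sigma_{k-1}(x'_i)$ and Newton's inequalities at each step. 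This route is more combinatorial but stays within the toolkit of the preceding section.
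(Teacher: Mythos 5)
The paper provides no proof of this statement: it is cited verbatim from Marcus and Lopes (1957) as a known result, used as black-box input for the developments that follow. So there is no ``paper's own proof'' to compare against, only the attribution.

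Your sketch is internally consistent as far as it goes, and you are candid about where it breaks down. The reduction of $\frac{1}{k}$-concavity on a convex cone to super-additivity of the $1$-homogeneous function $\sigma_k^{1/k}$ is standard and correct; the derivation of super-additivity from real-rootedness of $t \mapsto \sigma_k(y+tz)$ via the AM-GM-type inequality $\prod_i(1+\mu_i)^{1/k} \geq 1 + \prod_i \mu_i^{1/k}$ is also right. Your Rolle-based verification of real-rootedness at $z = \Ib$ is exactly the ingredient the paper itself records later as Theorem \ref{sigmakhyp} (via equation \eqref{sigmakXder}). But, as you observe, passing from $z = \Ib$ to arbitrary $z \in \Gamma_k$ is G\aa rding's Proposition \ref{hypconc}, which belongs to Section \ref{sectionhomog} of the paper and is itself a nontrivial theorem. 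Invoking it here would make the proof of Theorem \ref{concsigmak1surk} depend on machinery that the paper develops only afterwards and that logically supersedes the Marcus--Lopes result anyway. So along this route your argument is incomplete without importing G\aa rding wholesale. The ``alternative path'' you name at the end -- Hessian negative-semidefiniteness by induction on $k$ using $\drond_i \sigma_k = \sigma_{k-1}(x'_i)$ and Newton-type inequalities -- is indeed closer in spirit to the original Marcus--Lopes argument, but you leave it as a plan rather than a proof. In short: nothing you wrote is wrong, but the proposal is a sketch with an acknowledged hole at the crucial hyperbolicity step, rather than a proof the paper could stand on; and since the paper merely cites the result, the intended completion is the Marcus--Lopes Hessian computation you gesture at in the last paragraph.
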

According to lemma \ref{concmu},
\[H\left(\sigma_k^\alpha\right) \equiv \sigma_k^{1/k} H\left(\sigma_k^{1/k}\right) + \left(k\alpha - 1\right) D\left(\sigma_k^{1/k}\right) \otimes D\left(\sigma_k^{1/k}\right),\]
which is negative when $\alpha < 1/k$. Conversely, if $\alpha > 1/k$, $\sigma_k^\alpha$ cannot be concave on $\Delta$ where it reduces to $t \mapsto \binom{n}{k}^\alpha t^{k\alpha}$.

We suppose that this result still holds for a general $f_{\vec{a}}$.

\setcounter{conjbis}{0}
\begin{conjbis}
Let $p \in \Nb^*$, $\vec{a} = (a_0, a_1, \ldots, a_p) \in (\Rb_+)^{p+1}$. Then
\begin{align*}
f_{\vec{a}} \text{ is $\frac{1}{p}$-concave on } \Gamma_n \quad &\Longleftrightarrow \quad f_{\vec{a}} \text{ is $\frac{1}{p}$-concave on } \Delta \\
&\Longleftrightarrow \quad \bar{f}_{\vec{a}} \text{ is $\frac{1}{p}$-concave on } \Rb_+^*.
\end{align*}
That is to say,
\[\Kc_n^p = \Xc_n^p.\]
\end{conjbis}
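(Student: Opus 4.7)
The easy inclusion $\Kc_n^p \subset \Xc_n^p$ is almost immediate: since $\Delta \subset \Gamma_n$, a $\frac{1}{p}$-concave function on $\Gamma_n$ restricts to a $\frac{1}{p}$-concave function on $\Delta$, and the diagonal parameterization $t \mapsto (t,\ldots,t)$ identifies this with the $\frac{1}{p}$-concavity of $\bar{f}_{\vec{a}}$ on $\Rb_+^*$. The substance of the conjecture therefore lies in $\Xc_n^p \subset \Kc_n^p$, on which I would focus.

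My approach follows the hint of the abstract and uses hyperbolic polynomials. I would homogenize $f_{\vec{a}}$ by setting
\[F(y, x_1, \ldots, x_n) := a_0 y^p + a_1 y^{p-1} \sigma_1(x) + \ldots + a_p \sigma_p(x),\]
a homogeneous polynomial of degree $p$ in $n+1$ variables with $F(1, \cdot) = f_{\vec{a}}$. The plan then has three steps: (i) show that $F$ is hyperbolic with respect to the direction $e_0 = (1, 0, \ldots, 0)$; (ii) identify its hyperbolicity cone $\Lambda(F, e_0)$ and verify that $\{1\} \times \Gamma_n \subset \Lambda(F, e_0)$, which should follow from the straight-line connection to $e_0$ together with the positivity of $F$ on $\{1\} \times \Gamma_n$ when the $a_k$ are non-negative; (iii) invoke G\aa rding's concavity theorem to conclude that $F^{1/p}$ is concave on $\Lambda(F, e_0)$. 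Slicing at $y = 1$ then yields the desired $\frac{1}{p}$-concavity of $f_{\vec{a}}$ on $\Gamma_n$.

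Step (i) is the crux. Hyperbolicity of $F$ along $e_0$ amounts to real-rootedness of the univariate polynomial $s \mapsto F(s, x)$ for every $x \in \Rb^n$. For $x = t\Ib$ one computes
\[F(s, t\Ib) = s^p \bar{f}_{\vec{a}}(t/s),\]
so real-rootedness in $s$ is equivalent to real-rootedness of $\bar{f}_{\vec{a}}$. The hypothesis "$\bar{f}_{\vec{a}}$ is $\frac{1}{p}$-concave on $\Rb_+^*$" must therefore first be upgraded to "$\bar{f}_{\vec{a}}$ is real-rooted"; this is precisely Conjecture \ref{conj3}, known so far only for $\deg \bar{f}_{\vec{a}} \leq 3$ by Theorem \ref{thmdeg123}. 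To propagate real-rootedness of $F(s, x)$ from the diagonal case $x = t\Ib$ to an arbitrary $x \in \Rb^n$, I would use the algebraic statement from the abstract: if $P, Q \in \Rb[X]$ of degree $n$ are real-rooted, then so is $\sum_{k=0}^n P^{(k)} Q^{(n-k)}$. Polarisation identities, expressing $\sigma_k(x)$ as combinations of derivatives of the factored polynomial $(X+x_1)\cdots(X+x_n)$, should then reduce an arbitrary direction to sums of products of such derivatives, preserving real-rootedness at each step.

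The main obstacle is that this strategy relies on two open conjectures (Conjecture \ref{conj3} and the sum-of-products-of-derivatives statement), and even granting both, the inductive polarisation in (i) must be organised carefully so that the chain of real-rooted polynomials it produces lands exactly on $F(s, x)$. A more elementary alternative would be to work directly with the identity $H(f_{\vec{a}}^{1/p}) \equiv f_{\vec{a}} H(f_{\vec{a}}) + (\frac{1}{p} - 1) D(f_{\vec{a}}) \otimes D(f_{\vec{a}})$ from Lemma \ref{concmu}, using the explicit formulas $\drond_i \sigma_k = \sigma_{k-1}(x'_i)$ and $\drond_i \drond_j \sigma_k = \sigma_{k-2}(x'_{ij})$ for $i \neq j$, and then establishing negative semi-definiteness via Newton's inequalities together with the log-concavity of the $\sigma_k$. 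This is essentially the strategy which succeeds for $p=2$, where $H(f_{\vec{a}}) = a_2(J - I)$ reduces the problem to a single discriminant condition, but the combinatorics become rapidly unmanageable as $p$ grows.
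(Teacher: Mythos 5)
Your first paragraph is correct: since $\Kc_n^p \subset \Xc_n^p$ follows by restricting a concave function to the affine line $t \mapsto t\Ib$, the whole content of Conjecture~\ref{mainconj1} is the reverse inclusion. Note however that the paper does not prove this conjecture in general; Theorem~\ref{thmp2} settles it only for $p=2$, so there is no full proof to compare against.

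The decisive gap in your plan lies in the sentence where you claim that the hypothesis ``$\bar f_{\vec a}$ is $\tfrac1p$-concave on $\Rb_+^*$'' can be ``upgraded'' to ``$\bar f_{\vec a}$ is real-rooted'' via Conjecture~\ref{conj3}. That is not what Conjecture~\ref{conj3} says. In the notation of Section~\ref{sectionpolynomials}, Conjecture~\ref{conj3} is the implication $(P\ref{P2}) \Rightarrow (P\ref{P1})$, where $(P\ref{P2})$ requires the inequality $PP'' + (\tfrac1n-1)P'^2\le 0$ on the whole of $\Rb$, while $\tfrac1p$-concavity on $\Rb_+^*$ is the weaker $(P\ref{P3})$. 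The remark following Theorem~\ref{thmdeg123} exhibits $P = X^3 + X^2 + \tfrac13 X$, which satisfies $(P\ref{P3})$ but not $(P\ref{P2})$ and is not real-rooted. That polynomial arises as $\bar f_{\vec a}$ for a perfectly admissible $\vec a \in (\Rb_+)^4$ with $p=3$, so the upgrade you need — namely $(P\ref{P3})\Rightarrow(P\ref{P1})$, i.e.\ $\Xc_n^p \subset \Xi_n^p$ — is simply false once $p\ge 3$. Consequently, the hyperbolicity strategy cannot reach Conjecture~\ref{mainconj1}: a polynomial $F$ that is $v$-hyperbolic must have real-rooted restrictions $s\mapsto F(sv + w)$ in \emph{every} direction, and in particular your $F(s,t\Ib) = s^p\,\bar f_{\vec a}(t/s)$ forces $\bar f_{\vec a}$ itself to be real-rooted. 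The route you sketch can at best establish Conjecture~\ref{mainconj2} ($\Xi_n^p\subset\Kc_n^p$), which is precisely what the paper's Section~\ref{sectionhomog} attempts via the semi-symmetric polynomials $s_{n,p}$; it does not touch the part of $\Xc_n^p\setminus\Xi_n^p$ that Conjecture~\ref{mainconj1} also claims belongs to~$\Kc_n^p$.

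A secondary, smaller difference: you homogenize with a single auxiliary variable $y$, giving $F(y,x)=\sum_k a_k y^{p-k}\sigma_k(x)$, whereas the paper adjoins $p$ variables $\lambda\in\Rb^p$ obtained from the roots of $\bar f_{\vec a}$ and works with the bi-symmetric polynomial $s_{n,p}(x,\lambda)=\sum_k \binom{n}{k}^{-1}\sigma_k(x)\sigma_{p-k}(\lambda)$. Both are degree-$p$ homogenizations with $F(1,\cdot)=f_{\vec a}$ on the appropriate slice, but the paper's choice is built so that its hyperbolicity reduces cleanly to that of $s_{p,p}$ (Theorem~\ref{pip}) and hence to the derivative-sum statement $\sum_k P^{(k)}Q^{(p-k)}$ (Theorem~\ref{thmPQscinde}); your more economical $F$ does not come with such a reduction, and your last sentence acknowledging that the ``inductive polarisation must be organised carefully'' is where that difficulty would surface.
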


For now we only proved it for particular cases.

\subsection{With a determinant}
One way to study the concavity of $f_{\vec{a}}^{1/p}$ is to compute the determinant of its hessian. In this paragraph we shall do it for $p=2$ and prove conjecture \ref{mainconj1} in this case.

First we need a computational lemma:
\begin{lem} \label{detHfalpha}
Let $\Omega$ be an open set of $\Rb^n$, $f \in \Cc^2(\Omega \rightarrow \Rb_+^*)$, $\alpha > 0$. Then
\begin{equation}
\det H(f^\alpha) \equiv f \det H + (\alpha-1) ^t\!D \left(\com H\right) D. \label{Hgalpha}
\end{equation}
\end{lem}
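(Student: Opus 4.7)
The plan is to reduce everything to a single application of the matrix determinant lemma. Lemma \ref{concmu} already supplies the identity
\[H(f^\alpha) = \alpha f^{\alpha-2}\bigl[f H(f) + (\alpha-1)\, D(f)\otimes D(f)\bigr],\]
and the scalar $\alpha f^{\alpha-2}$ is strictly positive on $\Omega$, so raising to the $n$-th power only contributes a positive factor to the determinant. Hence the claim reduces to the computation of $\det M$ where $M := f H + (\alpha-1)\, D\otimes D$.

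To compute $\det M$, I would apply the rank-one update identity
\[\det(A + u\, v^{T}) = \det(A) + v^{T}\,\mathrm{adj}(A)\,u,\]
with $A = fH$, $u = (\alpha-1)\,D$, $v = D$. Using $\det(fH) = f^{n}\det H$ and $\mathrm{adj}(fH) = f^{n-1}\mathrm{adj}(H)$, together with the fact that $H$ is symmetric so that $\mathrm{adj}(H) = \com H$, this gives
\[\det M = f^{n}\det H + (\alpha-1)\,f^{n-1}\,{}^{t}\!D\,(\com H)\,D = f^{n-1}\bigl[f\det H + (\alpha-1)\,{}^{t}\!D\,(\com H)\,D\bigr].\]
Since $f^{n-1}$ is positive, combining this with the positive factor $(\alpha f^{\alpha-2})^{n}$ coming from Lemma \ref{concmu} establishes (\ref{Hgalpha}) up to a positive multiplicative function, which is exactly what the equivalence $\equiv$ requires.

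There is essentially no obstacle here: the only non-trivial input is the rank-one determinant update, which can either be quoted as the matrix determinant lemma or, for a self-contained presentation, derived on the spot by expanding $\det(fH + (\alpha-1)\,D\otimes D)$ multilinearly along the columns and observing that every term with at least two copies of a column from $D\otimes D$ vanishes, leaving only $\det(fH)$ plus $n$ terms each of which contributes one entry of the adjugate. The symmetry of $H$ is the minor point that lets us replace $\mathrm{adj}(H)$ by $\com H$ so that the statement matches the notation of the paper.
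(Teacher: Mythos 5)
Your proof is correct and follows essentially the same route as the paper: both reduce via Lemma~\ref{concmu} to computing $\det\bigl[fH + (\alpha-1)\,D\otimes D\bigr]$, and the rank-one update formula you quote as the matrix determinant lemma is exactly what the paper derives in place by expanding multilinearly over the columns and noting that any term with two proportional columns from $D\otimes D$ vanishes. Your observation that $\mathrm{adj}(H)=\com H$ by symmetry of $H$ correctly reconciles your $\mathrm{adj}$ with the paper's $\com$ notation.
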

\begin{proof}
We know that
\[H(f^\alpha) \equiv f H(f) + \left(\alpha - 1\right) D(f) \otimes D(f).\]
We set
\[H := H(f) = \left(\begin{array}{l|c|r} & & \\ H_1 & \ldots & H_n \\ & & \end{array}\right), \qquad
H_j := \begin{pmatrix} f_{1j} \\ \vdots \\ f_{nj} \end{pmatrix}, \qquad D := D(f) = \begin{pmatrix} f_1 \\ \vdots \\ f_n \end{pmatrix}.\]
Then
\begin{align*}
\det H(f^\alpha) &\equiv \det \left[f H + (\alpha-1) D \otimes D\right] \\
&= \det\left[f H_1 + (\alpha-1) f_1 D \ | \ \ldots \ | \ f H_n + (\alpha-1) f_n D\right] \\
&= f^n \det H + f^{n-1}(\alpha-1)\sum_{j=1}^n \det\left[H_1 \ | \ \ldots \ | \ f_j D \ | \ \ldots \ | \ H_n\right].
\end{align*}
We have to determine each of the $n$ terms of the sum, by expanding along the $j$-th row:
\begin{align*}
\det\left[H_1 \ | \ \ldots \ | \ f_j D \ | \ \ldots \ | \ H_n\right]
	&= f_j \sum_{i=1}^n (-1)^{i+j} f_i \det \begin{pmatrix} f_{11} & \ldots & \cancel{f_{1j}} & \ldots & f_{1n} \\ \vdots & & \vdots & & \vdots \\ \cancel{f_{i1}} & \ldots & \cancel{f_{ij}} & \ldots & \cancel{f_{in}} \\ \vdots & & \vdots & & \vdots \\ f_{n1} & \ldots & \cancel{f_{nj}} & \ldots & f_{nn} \end{pmatrix} \\
	&= f_j \sum_{i=1}^n (-1)^{i+j}f_i\bar{H}_{ij},
\end{align*}
where $\bar{H}_{ij}$ is the minor of $H$. Hence
\begin{align*}
\sum_{j=1}^n \det\left[H_1 \ | \ \ldots \ | \ f_j D \ | \ \ldots \ | \ H_n\right]
	&= \sum_{j=1}^n \sum_{i=1}^n f_j f_i (-1)^{i+j}\bar{H}_{ij} \\
	&= ^t\!\!D \left(\com H\right) D,
\end{align*}
which gives the claimed expression.
\end{proof}

This formula gives us the following result.
\begin{thm} \label{thmp2}
Let $\vec{a} = (a_0, a_1, a_2) \in (\Rb_+)^3$. We set $f_{\vec{a}} := a_0 + a_1 \sigma_1 + a_2 \sigma_2 \in \Rb_+[X_1,\ldots,X_n]$, and
\[\bar{f}_{\vec{a}}(X) := f_{\vec{a}}(X,\ldots,X) = a_0 \binom{n}{0} + a_1 \binom{n}{1} X + a_2 \binom{n}{2} X^2 \in \Rb_+[X].\]
Then
\[\vec{a} \in \Kc_n^2 \qquad \Longleftrightarrow \qquad \bar{f}_{\vec{a}} \text{ is real-rooted}.\]
In other words,
\[\sqrt{a_0 + a_1 \sigma_1 + a_2 \sigma_2} \text{ is concave on } \Gamma_n \qquad \Longleftrightarrow \qquad n a_1^2 - 2(n-1)a_0 a_2 \geq 0.\]
\end{thm}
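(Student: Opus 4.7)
For the easy direction $\Kc_n^2 \subseteq \Xi_n^2$: if $f_{\vec a}^{1/2}$ is concave on $\Gamma_n$, then its restriction to the half-diagonal $\Delta \subset \Gamma_n$, which coincides with $t \mapsto \bar f_{\vec a}(t)^{1/2}$, is concave on $\Rb_+^*$. Since $\deg \bar f_{\vec a} = 2$, Theorem \ref{thmdeg123} gives $(P\ref{P3}) \Leftrightarrow (P\ref{P1})$, so $\bar f_{\vec a}$ is real-rooted; equivalently its discriminant $n^2 a_1^2 - 4 \binom{n}{2} a_0 a_2 = n\bigl[n a_1^2 - 2(n-1) a_0 a_2\bigr]$ is non-negative, which is the stated inequality.

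For the reverse direction, assume $n a_1^2 \geq 2(n-1) a_0 a_2$. By Lemma \ref{concmu} it suffices to show that
\[ M := f H(f) - \tfrac12 D(f) \otimes D(f) \]
is negative semidefinite on $\Gamma_n$ (with $f := f_{\vec a}$). A direct computation gives $H(f) = a_2(J - I)$, where $J$ is the all-ones matrix, and $D(f) = (a_1 + a_2 \sigma_1) \Ib - a_2 x$. I orthogonally decompose $D(f) = \alpha \Ib + \delta$ with $\delta \perp \Ib$: one finds $n \alpha = n a_1 + (n-1) a_2 \sigma_1$ and $|\delta|^2 = a_2^2 \bigl[\tfrac{n-1}{n} \sigma_1^2 - 2 \sigma_2\bigr]$, the latter being non-negative by Newton's inequality.

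Parametrizing any $v \in \Rb^n$ as $v = t\Ib + w$ with $w \perp \Ib$, a direct calculation yields
\[ v^T M v \;=\; a_2 f \bigl[n(n-1) t^2 - |w|^2\bigr] - \tfrac12 (\alpha n t + \delta \cdot w)^2. \]
I view this as a quadratic in $t$. Its $t^2$-coefficient, $a_2 f n(n-1) - \alpha^2 n^2/2$, turns out to be non-positive: unfolding the squares and using both the hypothesis and Newton's inequality $(n-1) \sigma_1^2 \geq 2n \sigma_2$, one checks $\alpha^2 n^2 - 2 a_2 f n(n-1) = n\bigl[n a_1^2 - 2(n-1) a_0 a_2\bigr] + (n-1) a_2^2 \bigl[(n-1)\sigma_1^2 - 2n \sigma_2\bigr] \geq 0$. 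With a non-positive leading coefficient, the quadratic is $\leq 0$ in $t$ for every $w$ iff its $t$-discriminant is $\leq 0$. Optimizing this remaining condition against $w$ by Cauchy--Schwarz (worst case $w \parallel \delta$, permitted because $\delta \perp \Ib$) and plugging in the explicit values of $|\delta|^2$ and $f$, all $\sigma_k$-dependent terms cancel, and the condition reduces once again to $n a_1^2 \geq 2(n-1) a_0 a_2$. Hence $M$ is NSD on $\Gamma_n$, and $f_{\vec a}^{1/2}$ is concave.

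The main obstacle is the bookkeeping in the last step: two a priori distinct conditions for the quadratic in $t$ to be non-positive everywhere (sign of leading coefficient and sign of $t$-discriminant) must both collapse, after invoking Newton's inequality, to the single hypothesis $n a_1^2 \geq 2(n-1) a_0 a_2$. As an alternative using material from Section~4 of the paper, the same discriminant computation shows that $f_{\vec a}$ is hyperbolic with respect to $\Ib$ with hyperbolicity cone containing $\Gamma_n$; G\aa rding's concavity theorem then yields the $\tfrac12$-concavity of $f_{\vec a}$ directly.
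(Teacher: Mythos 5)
Your argument for the main equivalence is correct, and it takes a genuinely different route from the paper. The paper works globally: using Lemma~\ref{detHfalpha} it computes $\det H(f_{\vec a}^{1/2})$ in closed form via $H^{-1}$, finds that it equals $\tfrac12(-a_2)^{n-1}\bigl[2(n-1)a_0a_2 - na_1^2\bigr]$ (a \emph{constant} on $\Gamma_n$, of constant sign under the hypothesis), and then concludes by connectedness of $\Gamma_n$ together with a sign check of the Hessian far out on the diagonal via Cauchy--Schwarz. You instead verify negative semidefiniteness pointwise: you split both $D(f)$ and the test vector $v$ into $\Ib$-parallel and $\Ib$-orthogonal parts, turn $v^T\bigl[fH(f)-\tfrac12 D(f)\otimes D(f)\bigr]v$ into an explicit quadratic in the $\Ib$-component $t$, and show that its leading coefficient and $t$-discriminant are both controlled by the single number $na_1^2-2(n-1)a_0a_2$ once Newton's inequality $(n-1)\sigma_1^2\geq 2n\sigma_2$ is used (I checked the key identity $\alpha^2 n^2 - 2a_2 f n(n-1) - n(n-1)|\delta|^2 = n\bigl[na_1^2 - 2(n-1)a_0a_2\bigr]$, which is exactly the cancellation you claim). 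Your version is more elementary and avoids the connectedness argument, at the price of heavier algebra; the paper's determinant calculation is slicker but needs the endpoint check. One small caution about your closing remark: $f_{\vec a}$ itself is \emph{not} homogeneous (the paper stresses this), so it is not $\Ib$-hyperbolic as stated. What Section~4 of the paper actually establishes for $p=2$ (Proposition~\ref{conj4p2}) is that the homogeneous semi-symmetric polynomial $s_{2,2}$ (equivalently $s_{n,2}$) is $\Ib$-hyperbolic, and one then restricts to the slice $\lambda$ fixed to recover the concavity of $f_{\vec a}$. You should phrase the alternative in those terms.
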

\begin{rem}
Because of theorem \ref{thmdeg123}, this shows conjecture \ref{mainconj1} for $p=2$.
\end{rem}

\begin{proof}
If $a_2 = 0$, $f_{\vec{a}}$ is affine and all the properties are true.

Let us suppose that $a_2 > 0$. We use lemma \ref{detHfalpha} and take $f = f_{\vec{a}}$, and $\Omega = \Gamma_n$. Then
\begin{align*}
D(x) &= \begin{pmatrix} a_1 + a_2(\sigma_1(x) - x_1) \\ \vdots \\ a_1 + a_2(\sigma_1(x) - x_n) \end{pmatrix} = (a_1 + a_2 \sigma_1(x))\Ib - a_2 x, \\
H(x) &= a_2 \begin{pmatrix} 0 & 1 & \ldots & 1 \\ 1 & 0 & & \vdots \\ \vdots & & \ddots & 1 \\ 1 & \ldots & 1 & 0 \end{pmatrix}
\end{align*}
and
\[\det H(f^\alpha) \equiv f \det H + (\alpha - 1)^tD (\com H) D.\]

It is a classical result that $H(x)$ can be diagonalised in an orthonormal basis into
\[a_2\begin{pmatrix} n-1 & 0 & \ldots & 0 \\ 0 & -1 & & \vdots \\ \vdots & & \ddots & 0 \\ 0 & \ldots & 0 & -1 \end{pmatrix}.\]
Hence $\det H(x) = a_2^n(-1)^{n-1}(n-1)$. Moreover, one can check that
\[H(x)^{-1} = \dfrac{1}{(n-1)a_2}\left(\Ib \otimes \Ib - (n-1)\Id\right).\]
So
\begin{align*}
^tD (\com H) D &= ^tD (^t \com H) D \\
&= ^tD (\det H) H^{-1} D \\
&= ^tD\left[(-a_2)^{n-1}\left(\Ib^t\Ib - (n-1)\Id\right)\right]D \\
&= (-a_2)^{n-1}\left[\left(^t\Ib D\right)^2 - (n-1)^tDD\right] \\
&= (-a_2)^{n-1}\left[\left(\sum_{i=1}^n f_i\right)^2 - (n-1) \sum_{i=1}^n f_i^2\right].
\end{align*}

Then
\[\det H(f^\alpha) \equiv (-a_2)^{n-1}\left((n-1)a_2f + (\alpha-1)\left[\left(\sum_{i=1}^nf_i\right)^2 - (n-1)\sum_{i=1}^nf_i^2\right]\right).\]
Yet $f_i(x) = a_1 + a_2 \sigma_1(x) - a_2 x_i$, so
\begin{align*}
\sum_{i=1}^n f_i(x)^2 &= a_1^2 + (n-1)(a_1 + a_2 \sigma_1(x))^2 - 2a_2^2 \sigma_2(x), \\
\left(\sum_{i=1}^nf_i(x)\right)^2 &= \left(na_1 + (n-1)a_2\sigma_1(x)\right)^2 \\
	&= n^2 a_1^2 + 2n(n-1)a_1a_2\sigma_1(x) + (n-1)^2 a_2^2\sigma_1(x)^2, \\
\left(\sum_{i=1}^nf_i(x)\right)^2 - (n-1)\sum_{i=1}^nf_i(x)^2 &= na_1^2 + 2(n-1)a_1a_2\sigma_1(x) + 2(n-1)a_2^2\sigma_2(x).
\end{align*}
Finally, if we take $\alpha = 1/2$,
\begin{align*}
\det H(f^{1/2})(x) &\equiv (-a_2)^{n-1}\bigg((n-1)a_2(a_0 + a_1 \sigma_1(x) + a_2\sigma_2(x)) \\
& \qquad - \frac{1}{2}\left[na_1^2 + 2(n-1)a_1a_2\sigma_1(x) + 2(n-1)a_2^2\sigma_2(x)\right]\bigg) \\
&= \frac{1}{2}(-a_2)^{n-1}\left[2(n-1)a_0a_2 - n a_1^2\right].
\end{align*}

Hence, if $na_1^2 -2(n-1)a_0a_2 > 0$, $\det H(f^{1/2})$ has a constant sign, which is the parity of $n$. Its eigenvalues do not vanish on $\Gamma_n$, which is a connected set, so the concavity of $f^{1/2}$ is constant. For $t>0$,
\begin{align*}
H(f^{1/2})(t\cdot\Ib) &\equiv f(t)H(f)(t) - \frac{1}{2}D(f)(t) \otimes D(f)(t) \\
	&= \left(a_0 + a_1 nt + a_2 \frac{n(n-1)}{2}t^2\right) a_2H(t\cdot\Ib) - \frac{1}{2}(a_1 + a_2(n-1)t)^2 \Ib \otimes \Ib \\
	&\underset{t\rightarrow\infty}{\sim} \frac{a_2^2}{2}(n-1)\left[n(\Ib \otimes \Ib - \Id) - (n-1)\Ib \otimes \Ib\right]t^2 \\
	&\equiv \Ib \otimes \Ib - n\Id.
\end{align*}
Let $u=(u_1,\ldots,u_n) \in \Rb^n$. Let us compute
\begin{align*}
^t\!u(\Ib \otimes \Ib - n\Id)u &= (u|\Ib)^2 - (\Ib|\Ib)(u|u) \\
&\leq 0
\end{align*}
according to Cauchy-Schwarz inequality. Hence $H(f^{1/2})$ is a negative matrix at $t\cdot\Ib$ for $t$ large enough, thus negative everywhere on $\Gamma_n$.

Conversely, if $na_1^2 -2(n-1)a_0a_2 < 0$, $\det H(f^{1/2})$ has a constant sign, which is opposite of the parity of $n$. The eigenvalues of $H(f^{1/2})$ cannot be all negative, so $f^{1/2}$ cannot be concave on $\Gamma_n$.

We can handle the case $na_1^2 - 2(n-1) a_0 a_2 = 0$ because the set of concave functions is closed.
\end{proof}

\subsection{Other tools} \label{othertools}

\subsubsection{\texorpdfstring{$\sigma_{k+1}/\sigma_k$}{sk+1/sk}}
In this paragraph we shall exhibit a peculiar case of $\vec{a} \in \Kc_n^p$, using a concavity result from M. Marcus and L. Lopes:
\begin{thm}[1957, \cite{Marcus57}] \label{concsigmak+1sigmak}
Let $0 \leq k \leq n-1$. Then $\dfrac{\sigma_{k+1}}{\sigma_k}$ is concave on $\Gamma_n$.
\end{thm}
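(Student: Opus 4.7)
My plan is to show directly that $H(\sigma_{k+1}/\sigma_k)\le 0$ on $\Gamma_n$. It suffices, by polarization, to fix $x\in\Gamma_n$ and $v\in\Rb^n$ and check that $\psi(t):=\sigma_{k+1}(x+tv)/\sigma_k(x+tv)$ satisfies $\psi''(0)\le 0$. Setting $A(t)=\sigma_{k+1}(x+tv)$ and $B(t)=\sigma_k(x+tv)$, the quotient rule yields
\[
B^{3}\psi'' \;=\; B^{2}A''-2BB'A'+(2B'^{2}-BB'')A,
\]
so the task reduces to showing the right-hand side is $\le 0$ at $t=0$.

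Next I would expand $A,A',A'',B,B',B''$ at $t=0$ using the standard identities $\partial_{i}\sigma_{\ell}(x)=\sigma_{\ell-1}(x'_{i})$, $\partial_{i}\partial_{j}\sigma_{\ell}(x)=\sigma_{\ell-2}(x'_{ij})$ for $i\ne j$, and $\partial_{i}^{2}\sigma_{\ell}(x)=0$, combined with the recursion $\sigma_\ell(x)=\sigma_\ell(x'_i)+x_i\sigma_{\ell-1}(x'_i)$. After substitution, the inequality becomes a homogeneous quadratic form in $v$ whose coefficients are polynomial in the reduced-variable elementary symmetric functions $\sigma_{\cdot}(x'_{i})$ and $\sigma_{\cdot}(x'_{ij})$.

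The crucial step is to bound this quadratic form from above by $0$. Here I would invoke Newton's inequality from Section~\ref{sectionpolynomials}: for each $i$ (resp.\ each pair $(i,j)$), the sequence $(\sigma_{\ell}(x'_{i}))_{\ell}$ (resp.\ $(\sigma_{\ell}(x'_{ij}))_{\ell}$) is the coefficient sequence of a real-rooted polynomial with positive roots, hence log-concave. By grouping cross-terms indexed by pairs $(i,j)$ and combining Cauchy--Schwarz with the log-concavity estimates, I would aim to rewrite $B^{2}A''-2BB'A'+(2B'^{2}-BB'')A$ as a sum of manifestly non-positive contributions.

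The main obstacle is the combinatorial bookkeeping: the raw expansion contains both positive and negative contributions, and the cancellations making the total non-positive only become visible after the right regrouping. An alternative, conceptually cleaner route would invoke the theory of hyperbolic polynomials of G\aa rding alluded to in the introduction: since each $\sigma_{k}$ is hyperbolic with respect to $\Ib$, the concavity of $\sigma_{k+1}/\sigma_{k}$ on the hyperbolicity cone should follow from a general concavity principle for ratios of consecutive ``directional derivatives'' of a hyperbolic polynomial, at the cost of importing more machinery than the elementary identities above.
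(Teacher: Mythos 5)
The paper does not actually prove this statement: Theorem~\ref{concsigmak+1sigmak} is quoted from Marcus--Lopes with no proof given, so there is nothing internal to compare against. Judged on its own, your proposal is a plan rather than a proof, and the decisive step is left open. You reduce correctly to checking that $B^{2}A''-2BB'A'+(2B'^{2}-BB'')A\le 0$ along lines, and the expansion via $\partial_i\sigma_\ell(x)=\sigma_{\ell-1}(x'_i)$ and $\partial_i\partial_j\sigma_\ell(x)=\sigma_{\ell-2}(x'_{ij})$ is fine (and since $\Gamma_n=(\Rb_+^*)^n$, the truncated vectors indeed have positive entries, so Newton's inequalities apply to them). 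But the point where you write that you would ``aim to rewrite'' the resulting quadratic form in $v$ as a sum of manifestly nonpositive contributions is precisely where the theorem lives, and you acknowledge yourself that the necessary regrouping is not exhibited. Newton's inequalities for the one-variable-deleted and two-variable-deleted families plus Cauchy--Schwarz do \emph{not} obviously combine to dominate the cross-terms: the quadratic form mixes $\sigma_\ell(x'_i)$, $\sigma_\ell(x'_j)$, and $\sigma_\ell(x'_{ij})$ across different indices, and a naive termwise application of log-concavity is not enough; one needs an identity (e.g.\ a Schur-complement or sum-of-squares decomposition as in the Marcus--Lopes or the McLeod/Bullen arguments) that you have not produced. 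The alternative route you mention via G\aa rding's hyperbolic polynomials --- concavity of the ratio of consecutive directional derivatives on the hyperbolicity cone --- is a genuine theorem, but invoking it without statement or proof again leaves the key inequality unestablished. As it stands, the proposal identifies the right reduction but does not close the gap.
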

This result had been generalised to couples $(k, k+l)$ by J. B. MacLeod and P. Bullen, M. Marcus:
\begin{thm}[1959, \cite{Leod59}, 1961, \cite{Bullen61}] \label{concsigmak+lsigmak}
Let $0 \leq l \leq n$, $0 \leq k \leq n-l$. Then $\dfrac{\sigma_{k+l}}{\sigma_k}$ is $\frac{1}{l}$-concave on $\Gamma_n$.
\end{thm}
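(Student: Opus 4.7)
The natural approach is induction on $l$, using Lemma \ref{concgamma} to multiply concave factors. The base case $l=1$ is exactly Theorem \ref{concsigmak+1sigmak}, which asserts that $\sigma_{k+1}/\sigma_k$ is concave (i.e.\ $1$-concave) on $\Gamma_n$ for all $0 \leq k \leq n-1$.

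For the inductive step, suppose the conclusion holds at level $l$: for every $k$ with $0 \leq k \leq n-l$, the function $\sigma_{k+l}/\sigma_k$ is $\tfrac{1}{l}$-concave on $\Gamma_n$. To establish the level $l+1$, fix $k$ with $0 \leq k \leq n-l-1$ and write the telescoping factorisation
\[
\frac{\sigma_{k+l+1}}{\sigma_k} \;=\; \frac{\sigma_{k+l+1}}{\sigma_{k+l}} \,\cdot\, \frac{\sigma_{k+l}}{\sigma_k}.
\]
Each factor is strictly positive and $\Cc^2$ on $\Gamma_n$, since $\sigma_i > 0$ there for all $0 \leq i \leq n$. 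The first factor is $1$-concave on $\Gamma_n$ by Theorem \ref{concsigmak+1sigmak} applied with $k' = k+l \leq n-1$; the second is $\tfrac{1}{l}$-concave by the induction hypothesis. Lemma \ref{concgamma} then yields that the product is $\gamma$-concave with $\tfrac{1}{\gamma} = 1 + l$, i.e.\ $\gamma = \tfrac{1}{l+1}$, which is exactly what we want.

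There is really no obstacle here: the whole point of Lemma \ref{concgamma} (as flagged in the remark following it) is to make this induction trivial, replacing the Hölder-inequality manipulations of the original papers \cite{Leod59}, \cite{Bullen61}. The only small points to verify are that the index constraints are compatible at each step ($k+l+1 \leq n$ ensures both $\sigma_{k+l+1}/\sigma_{k+l}$ and $\sigma_{k+l}/\sigma_k$ are well-defined and positive on $\Gamma_n$), and that the case $l=0$ is vacuous (the exponent $1/l$ being undefined, and the ratio being identically $1$). If one wanted a more self-contained exposition, an alternative would be to establish the factorisation $H\bigl((\sigma_{k+l}/\sigma_k)^{1/l}\bigr) \preceq 0$ directly via the hessian computation in Lemma \ref{concmu}, but the inductive product-lemma route is strictly shorter.
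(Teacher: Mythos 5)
Your proof is correct and takes essentially the same approach as the paper: the telescoping factorisation $\sigma_{k+l}/\sigma_k = \prod_{j=0}^{l-1} \sigma_{k+j+1}/\sigma_{k+j}$, each factor $1$-concave by Theorem \ref{concsigmak+1sigmak}, combined via Lemma \ref{concgamma}. The only difference is presentational — the paper states the full product at once and invokes the lemma implicitly $l-1$ times, while you make the induction on $l$ explicit, which is a slightly more careful way of saying the same thing.
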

\begin{proof}
The original proof in \cite{Leod59} is quite involved. The proof of \cite{Bullen61} is shorter and invokes Hölder's inequality. We present here an even shorter proof that uses our lemma \ref{concgamma}:
\[\dfrac{\sigma_{k+l}}{\sigma_k} = \dfrac{\sigma_{k+l}}{\sigma_{k+l-1}}\dfrac{\sigma_{k+l-1}}{\sigma_{k+l-2}}\ldots\dfrac{\sigma_{k+1}}{\sigma_k},\]
and each of the $l$ factors of this product is $1$-concave, so the product is $\frac{1}{l}$-concave.
\end{proof}

Nevertheless we shall only use the first theorem, to deduce:
\begin{thm} \label{akak+1}
Let $0 \leq k \leq n-1$, $a_{k}, a_{k+1} \in \Rb_+^*$. Then
\[a_k \sigma_k + a_{k+1} \sigma_{k+1} \text{ is $\frac{1}{k+1}$-concave on } \Gamma_n.\]
\end{thm}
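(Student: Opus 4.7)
The plan is to factor the function on $\Gamma_n$ as a product of two functions whose concavity exponents are already known, and then invoke Lemma \ref{concgamma}.

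First, assume $k \geq 1$. On $\Gamma_n$ we have $\sigma_k > 0$, so we may write
\[
a_k \sigma_k + a_{k+1} \sigma_{k+1} \;=\; \sigma_k \cdot \left(a_k + a_{k+1} \dfrac{\sigma_{k+1}}{\sigma_k}\right).
\]
The first factor $\sigma_k$ is $\frac{1}{k}$-concave on $\Gamma_k \supset \Gamma_n$ by Theorem \ref{concsigmak1surk}, and it is positive on $\Gamma_n$. For the second factor: by Theorem \ref{concsigmak+1sigmak}, $\sigma_{k+1}/\sigma_k$ is concave on $\Gamma_n$; multiplying by $a_{k+1} > 0$ preserves concavity, and adding the positive constant $a_k$ preserves concavity as well. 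Moreover, this second factor is strictly positive on $\Gamma_n$ since $a_k > 0$ and $\sigma_{k+1}/\sigma_k > 0$. So it is a positive concave $\mathcal{C}^2$ function, i.e.\ $1$-concave.

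Applying Lemma \ref{concgamma} with $\alpha = 1/k$ and $\beta = 1$, the product is $\gamma$-concave with
\[
\dfrac{1}{\gamma} \;=\; \dfrac{1}{\alpha} + \dfrac{1}{\beta} \;=\; k + 1,
\]
so $\gamma = \frac{1}{k+1}$, which is exactly the claim.

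The case $k = 0$ is handled separately, and trivially: $f_{\vec{a}} = a_0 + a_1 \sigma_1$ is affine, hence concave on all of $\Rb^n$, and concavity is $\frac{1}{1} = \frac{1}{k+1}$-concavity. There is no real obstacle here: the substantive work has already been done in Theorems \ref{concsigmak1surk} and \ref{concsigmak+1sigmak}, and Lemma \ref{concgamma} precisely packages the ``addition of concavity exponents'' for products. The only thing to be mildly careful about is checking the positivity required to apply Lemma \ref{concgamma} to the factor $a_k + a_{k+1} \sigma_{k+1}/\sigma_k$, which is immediate from the strict positivity of the $a_i$'s on $\Gamma_n$.
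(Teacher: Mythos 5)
Your proof is correct and follows essentially the same route as the paper: factor out $\sigma_k$, appeal to Theorems \ref{concsigmak1surk} and \ref{concsigmak+1sigmak} for the $\frac{1}{k}$-concavity of $\sigma_k$ and the $1$-concavity of $\sigma_{k+1}/\sigma_k$, and combine via Lemma \ref{concgamma}. Your explicit treatment of the $k=0$ case is a small but welcome refinement, since for $k=0$ the factor $\sigma_0$ is constant and the ``$\frac{1}{k}$-concave'' notation from Theorem \ref{concsigmak1surk} degenerates; the paper tacitly skips that edge case.
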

\begin{rem}
$\bar{f}_{\vec{a}} = a_k \binom{n}{k} X^k + a_{k+1} \binom{n}{k+1} X^{k+1}$ is always real-rooted, so this fulfils conjectures \ref{mainconj1} and \ref{mainconj2}.
\end{rem}
\begin{proof}
We write
\[a_k \sigma_k + a_{k+1} \sigma_{k+1} = a_k \sigma_k \left(1 + \dfrac{a_{k+1}}{a_k}\dfrac{\sigma_{k+1}}{\sigma_k}\right).\]
However $\sigma_k$ is $\frac{1}{k}$-concave on $\Gamma_n$ according to theorem \ref{concsigmak1surk}, and $\sigma_{k+1}/\sigma_k$ is $1$-concave according to theorem \ref{concsigmak+1sigmak}; so is $1 + a_{k+1}\sigma_{k+1}/a_k\sigma_k$.

Therefore, referring to lemma \ref{concgamma}, $a_k \sigma_k (1 + a_{k+1}\sigma_{k+1}/a_k\sigma_k)$ is $\frac{1}{k+1}$-concave.
\end{proof}

\subsubsection{Roots and \texorpdfstring{$\sigma_k$}{sk}}
Now we shall use the connection between the roots of a polynomial and the elementary symmetric polynomials. We shall need a formula:
\begin{lem}
Let $\lambda \in \Rb^l$, $\mu \in \Rb^m$, $k \in \Nb$. Then
\begin{equation} \label{sumsigma}
\sigma_k(\lambda, \mu) = \sigma_k(\lambda)\sigma_0(\mu) + \sigma_{k-1}(\lambda)\sigma_1(\mu) + \ldots + \sigma_1(\lambda)\sigma_{k-1}(\mu) + \sigma_0(\lambda)\sigma_k(\mu).
\end{equation}
\end{lem}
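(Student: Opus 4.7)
The plan is to use the generating function characterization of the elementary symmetric polynomials. Recall that for any vector $x = (x_1, \ldots, x_N) \in \Rb^N$, one has the polynomial identity
\[\prod_{i=1}^N (1 + t\, x_i) = \sum_{k=0}^N \sigma_k(x)\, t^k\]
in the formal variable $t$. I would apply this identity to the concatenated vector $(\lambda,\mu) \in \Rb^{l+m}$, and factor the product according to whether each index lies in the $\lambda$-block or the $\mu$-block.

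This factorization gives
\[\sum_{k=0}^{l+m} \sigma_k(\lambda,\mu)\, t^k = \left(\prod_{i=1}^l (1+t\lambda_i)\right)\!\left(\prod_{j=1}^m (1+t\mu_j)\right) = \left(\sum_{a=0}^l \sigma_a(\lambda)\, t^a\right)\!\left(\sum_{b=0}^m \sigma_b(\mu)\, t^b\right).\]
Expanding the product on the right and identifying the coefficient of $t^k$ on both sides yields the claimed identity, under the convention that $\sigma_a(\lambda) = 0$ for $a > l$ and $\sigma_b(\mu) = 0$ for $b > m$; these vanishing conventions automatically trim the convolution sum to its meaningful range when $k$ exceeds $l$ or $m$.

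Alternatively, the identity admits a direct combinatorial proof, which I would mention as a sanity check. Expanding $\sigma_k(\lambda,\mu) = \sum_{i_1 < \cdots < i_k}(\lambda,\mu)_{i_1}\cdots (\lambda,\mu)_{i_k}$, every $k$-subset of $\{1,\ldots,l+m\}$ splits uniquely as a $(k-j)$-subset of $\{1,\ldots,l\}$ together with a $j$-subset of $\{l+1,\ldots,l+m\}$. Grouping the terms of the sum by the value of $j$ and summing independently over the two blocks produces exactly $\sigma_{k-j}(\lambda)\,\sigma_j(\mu)$ in each group, so that summing over $j$ from $0$ to $k$ recovers the formula.

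There is no genuine obstacle here, since this is a standard convolution identity; the only mild subtlety is bookkeeping the range of summation when $k > \min(l,m)$, which is handled cleanly by the zero-convention for out-of-range $\sigma_a$. I would present the generating function argument as the main proof, as it is the most compact.
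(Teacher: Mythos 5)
Your proof is correct, and since the paper offers no actual argument for this lemma (it merely remarks that it ``can directly be proved, eg.\ by counting arguments or by double induction on $k$ and $l$''), your combinatorial decomposition is exactly the ``counting argument'' the paper gestures at, and the generating-function factorisation is just a compact repackaging of that same idea. Nothing is missing; both of your routes are valid and match the paper in substance.
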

\begin{proof}
This can directly be proved, eg. by counting arguments or by double induction on $k$ and $l$.
\end{proof}

\begin{thm} \label{Pascinde}
Let $\vec{a} = (a_0, a_1, \ldots, a_p) \in \Rb^{p+1}$. If there exist $m \in \Nb$, $R \in \Rb_+[X] \cap \Rb_{m-1}[X]$ such that
\[Q := X^mP_{\vec{a}} + R\]
is real-rooted, then $f_{\vec{a}}^{1/p}$ is concave on $\Gamma_n$.

Equivalently: if there exist $m \in \Nb$, $\lambda \in \left(\Rb_+\right)^{m+p}$ such that
\[\dfrac{a_k}{a_p} = \sigma_{p-k}(\lambda), \qquad 0 \leq k \leq p,\]
then $a_0 + a_1\sigma_1 + \ldots + a_p\sigma_p$ is $\frac{1}{p}$-concave on $\Gamma_n$.
\end{thm}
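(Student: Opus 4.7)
The plan is to handle the equivalent ``coefficient'' formulation (the two forms are equivalent by elementary coefficient matching) and to recognise $f_{\vec{a}}$ as the restriction of a larger elementary symmetric polynomial $\sigma_p$ to an affine slice, so that Marcus--Lopes's concavity theorem (Theorem~\ref{concsigmak1surk}) does all the work.

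First I would dispatch the equivalence of the two statements. If $Q = X^m P_{\vec{a}} + R$ is real-rooted with non-negative coefficients, all its roots are non-positive, so $Q = a_p \prod_{i=1}^{m+p}(X+\lambda_i)$ with $\lambda_i \ge 0$. Matching the coefficient of $X^{m+k}$ in $Q$ for $0\le k\le p$ gives $a_k = a_p \sigma_{p-k}(\lambda)$. Conversely, given such $\lambda$, define $R$ by the coefficients $R_j = a_p \sigma_{m+p-j}(\lambda)$ for $0\le j \le m-1$; then $Q = a_p \prod(X+\lambda_i)$ by construction, so $Q$ is real-rooted with non-negative coefficients, and $R \in \Rb_+[X]\cap\Rb_{m-1}[X]$.

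Now the heart of the proof: apply formula~\eqref{sumsigma} to the $n+m+p$ variables split as $x=(x_1,\ldots,x_n)$ and $\lambda=(\lambda_1,\ldots,\lambda_{m+p})$,
\[
\sigma_p(x,\lambda) \;=\; \sum_{j=0}^p \sigma_{p-j}(x)\,\sigma_j(\lambda)
\;=\; \frac{1}{a_p}\sum_{j=0}^p a_{p-j}\,\sigma_{p-j}(x)
\;=\; \frac{1}{a_p}\,f_{\vec{a}}(x),
\]
so that $f_{\vec{a}}(x) = a_p\, \sigma_p(x,\lambda)$. In other words, up to the positive factor $a_p$, the function $f_{\vec{a}}$ is the restriction of $\sigma_p \colon \Rb^{n+m+p} \to \Rb$ to the affine slice $\{(x,\lambda) : x\in\Rb^n\}$.

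It remains to apply Theorem~\ref{concsigmak1surk}, which asserts that $\sigma_p$ is $\tfrac{1}{p}$-concave on $\Gamma_p \subset \Rb^{n+m+p}$. A second application of~\eqref{sumsigma} yields, for every $x\in\Gamma_n$ and $0\le i\le p$, the inequality $\sigma_i(x,\lambda) \ge \sigma_i(x) > 0$ (the $j=0$ term is $\sigma_i(x)$, the remaining terms being non-negative), so the slice $\Gamma_n \times \{\lambda\}$ sits inside $\Gamma_p$. Since $\tfrac{1}{p}$-concavity is preserved by restriction to an affine subspace meeting the domain, $f_{\vec{a}}^{1/p}$ is concave on $\Gamma_n$. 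The only mildly delicate point, and thus the main (small) obstacle, is ensuring $(x,\lambda)\in\Gamma_p$ in the borderline regime $n<p$, where some $\sigma_i(x)$ vanish; this is handled either by noting that $\sigma_j(\lambda)>0$ for the indices that matter when $\lambda$ has enough positive entries, or by a short continuity argument perturbing the zero entries of $\lambda$ and using that the set of $\tfrac{1}{p}$-concave functions is closed under pointwise limits.
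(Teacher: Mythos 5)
Your proof is correct and follows essentially the same route as the paper: use \eqref{sumsigma} and the coefficient relation $a_{p-j}=a_p\sigma_j(\lambda)$ to identify $f_{\vec a}$ with $a_p\,\sigma_p(\cdot,\lambda)$ in $n+m+p$ variables, then invoke Marcus--Lopes (Theorem~\ref{concsigmak1surk}) and restrict to the affine slice. Your observation that $\sigma_i(x,\lambda)\ge\sigma_i(x)>0$ so that $\Gamma_n\times\{\lambda\}\subset\Gamma_p$ is in fact slightly more careful than the paper, which casually restricts from $(\Rb_+^*)^{m+p+n}$ even though $\Gamma_n$ is not contained in $(\Rb_+^*)^n$; the $\Gamma_p$-cone is the right domain to quote, and your version closes that small gap.
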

\begin{rem}
This theorem implies theorem \ref{akak+1}.
\end{rem}
\begin{proof}
Actually we can show something stronger. Let $\lambda = (\lambda_1, \ldots, \lambda_{m+p}) \in \Rb_+^{m+p}$ such that $-\lambda$ is the set of the roots of $Q$. Then
\[Q = a_p(X+\lambda_1)\ldots(X+\lambda_{m+p}) = a_p\left[\sigma_0(\lambda)X^{m+p} + \sigma_1(\lambda)X^{m+p-1} + \ldots + \sigma_p(\lambda)X^m + \ldots + \sigma_{m+p}(\lambda)X^0\right].\]
However,
\[Q = X^mP + R = a_p X^{m+p} + a_{p-1} X^{m+p-1} + \ldots + a_0 X^m + R,\]
so
\[a_p \sigma_k(\lambda) = a_{p-k}, \qquad 0 \leq k \leq p.\]
Therefore,
\begin{align*}
f_{\vec{a}}(x) &= a_0 \sigma_0(x) + a_1 \sigma_1(x) + \ldots + a_p \sigma_p(x) \\
&= a_p\left[\sigma_p(\lambda)\sigma_0(x) + \sigma_{p-1}(\lambda)\sigma_1(x) + \ldots + \sigma_0(\lambda) \sigma_p(x)\right] \\
&= a_p \sigma_p(\lambda,x)
\end{align*}
according to \eqref{sumsigma}. We know by theorem \ref{concsigmak1surk} that $(\mu,x) \in \left(\Rb_+^*\right)^{m+p+n} \longmapsto \sigma_p(\mu,x)$ is a $\frac{1}{p}$-concave function on $\left(\Rb_+^*\right)^{m+p+n}$, let alone its restriction: $x \in \Gamma_n \longmapsto \sigma_p(\lambda,x)$, which is proportional to $f_{\vec{a}}$.
\end{proof}

Hence, given $\vec{a} \in \Rb_+^{p+1}$, we can use the criteria on the real-rootedness of polynomials from section \ref{sectionpolynomials} to study $P_{\vec{a}}$, and conclude about the concavity of $f_{\vec{a}}^{1/p}$. For instance, the log-concavity of the sequence $(a_0, a_1, \ldots, a_p)$ is a necessary condition for every polynomial of the form $X^mP_{\vec{a}} + R$ to be real-rooted.

\begin{rem} \label{remPm}
Nevertheless, the fact that some polynomial of the form $X^mP_{\vec{a}} + R$ is real-rooted is too strong to be a necessary condition to the concavity of $f_{\vec{a}}^{1/p}$. For example, let us take $n=2$, $\vec{a} = (1,p,q) \in \Rb_+^3$. Then
\begin{align*}
f_{\vec{a}}(x_1,x_2) &= 1 + p(x_1+x_2) + qx_1x_2, \\
H\left(f_{\vec{a}}^{1/2}\right)(x) &\equiv -\dfrac{1}{2}\begin{pmatrix} (p+qx_2)^2 & (p+qx_1)(p+qx_2) - 2qf_{\vec{a}}(x) \\ (p+qx_1)(p+qx_2) - 2qf_{\vec{a}}(x) & (p+qx_1)^2 \end{pmatrix}, \\
\det H\left(f_{\vec{a}}^{1/2}\right) &\equiv p^2 - q, \\
P_{\vec{a}} &= 1 + pX + qX^2, \\
(X^m P_{\vec{a}})^{(m)} &= m!\left(\binom{m}{0} + p\binom{m+1}{1}X + q\binom{m+2}{2}X^2\right).
\end{align*}
So the discriminant of $(X^mP_{\vec{a}})^{(m)}$ is proportional to $p^2 - 2q\left(1+\frac{1}{m+1}\right)$, which is smaller than $p^2 - 2q$.

Let us choose $p=2$ and $q=3$. Then $f_{\vec{a}}^{1/2}$ is concave. Yet $(X^m P_{\vec{a}})^{(m)}$ is not real-rooted, thus no polynomial of the form $X^mP_{\vec{a}} + R$ can be real-rooted, according to Rolle's lemma.
\end{rem}

\begin{rem}
Let $P \in \Rb[X]$, $m \in \Nb$. Rolle's lemma guarantees the following assertion:
\[\text{There exists some } R \in \Rb_{m-1}[X] \text{ such that } X^mP + R \text{ is real-rooted}. \qquad \Longrightarrow \qquad (X^mP)^{(m)} \text{ is real-rooted}.\]
But the reverse is not true. Let us take $Q = X(X-1)(X-2)(X-4)(X-5)(X-6)$. $Q$ is real-rooted, although none of its primitives is. Let us set $S = \int_0^X Q$. $S$ is a polynomial that vanishes in $0$, so there exists some $P \in \Rb[X]$ such that $S = XP$. Hence $(XP)' = Q$ is real-rooted, but there is no $r \in \Rb$ such that $XP+r = r + \int_0^X Q$ is real-rooted.
\end{rem}

The hypothesis of theorem \ref{Pascinde} is strong: it amounts to assume that $f_{\vec{a}}$ is the restriction of some $\sigma_p$. However, it is not the only way to be $\frac{1}{p}$-concave. In the next section, we shall develop a new conjecture based on proposition \ref{concroots}. This conjecture is weaker than conjecture \ref{mainconj1}.

\section{Homogeneous polynomials} \label{sectionhomog}

\setcounter{conjbis}{1}
\begin{conjbis}
Let $p \in \Nb^*$, $\vec{a} = (a_0, a_1, \ldots, a_p) \in (\Rb_+)^{p+1}$. Then
\[\bar{f}_{\vec{a}} \text{ is real-rooted} \quad \Longrightarrow \quad f_{\vec{a}} \text{ is $\frac{1}{p}$-concave on } \Gamma_n.\]
That is to say,
\[\Xi_n^p \subset \Kc_n^p.\]

Or equivalently: if there exists $\lambda \in \left(\Rb_+\right)^{p}$ such that
\[\dfrac{a_k}{a_p} = \dfrac{\binom{n}{p}}{\binom{n}{k}}\sigma_{p-k}(\lambda), \qquad 0 \leq k \leq p,\]
then $a_0 + a_1\sigma_1 + \ldots + a_p\sigma_p$ is $\frac{1}{p}$-concave on $\Gamma_n$.
\end{conjbis}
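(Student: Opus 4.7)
The plan is to realize $f_{\vec{a}}$ as the restriction of a hyperbolic polynomial of degree $p$ to an affine subspace and then to invoke G\aa rding's theorem, which asserts that the $p$-th root of a hyperbolic polynomial is concave on its hyperbolicity cone.

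First I unpack the hypothesis. Real-rootedness of $\bar{f}_{\vec{a}}$ together with the non-negativity of its coefficients yields the factorisation $\bar{f}_{\vec{a}}(X) = a_p \binom{n}{p} \prod_{j=1}^p(X+\lambda_j)$ with $\lambda_j \geq 0$, and comparing coefficients gives $a_k/a_p = \binom{n}{p}/\binom{n}{k}\,\sigma_{p-k}(\lambda)$. On $\Rb^{n+p}$ I introduce the homogeneous symmetric polynomial of degree $p$
\[
F(x,y) \;:=\; \frac{1}{p!} \sum_{\substack{S \subseteq [n] \\ |S|=p}} \operatorname{perm}\bigl((x_i+y_j)_{i \in S,\, 1 \leq j \leq p}\bigr) \;=\; \sum_{k=0}^p \frac{\binom{n}{p}}{\binom{n}{k}}\,\sigma_{p-k}(y)\,\sigma_k(x),
\]
the two expressions being equal after expanding the permanents and using $k!(n-k)!/(p!(n-p)!) = \binom{n}{p}/\binom{n}{k}$. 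By construction, $F(x,\lambda) = f_{\vec{a}}(x)/a_p$, and on the full diagonal $F(s\Ib_n,\,t\Ib_p) = \binom{n}{p}(s+t)^p$.

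The core step is to show that $F$ is hyperbolic with respect to the direction $\Ib_{n+p}$, with hyperbolicity cone containing $\Gamma_n \times \Rb_+^p$. For $p=1$ this is immediate; for $p=2$ the discriminant of $s \mapsto F((x,y)+s\Ib_{n+p})$ reduces, after a short computation, to the sum of $(y_1-y_2)^2$ and a positive multiple of $n\sum_i x_i^2 - (\sum_i x_i)^2 \geq 0$ (Cauchy--Schwarz, equivalently Newton's first inequality), which recovers the case already handled by Theorem~\ref{thmp2}. For $p \geq 3$ I would try to identify $F$ with a specialisation of $\sigma_p$ on $\Rb^N$ for some $N \geq n+p$, using weighted repetitions of the $y$-variables to engineer the $\binom{n}{p}/\binom{n}{k}$ weights; alternatively, I would attempt an induction on $p$ by peeling off one factor of $\bar{f}_{\vec{a}}$ at a time and exploiting the fact that directional derivatives of hyperbolic polynomials remain hyperbolic with enlarged cones.

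Once the hyperbolicity of $F$ is established, G\aa rding's theorem makes $F^{1/p}$ concave on the hyperbolicity cone, and restriction to the affine slice $y=\lambda$ preserves concavity, yielding the desired $\frac{1}{p}$-concavity of $f_{\vec{a}}$ on $\Gamma_n$. The hard part is the hyperbolicity of $F$ for $p \geq 3$: as noted in the abstract, this seems to force us into the unproven algebraic statement that $\sum_{k=0}^n P^{(k)} Q^{(n-k)}$ is real-rooted whenever $P$ and $Q$ are real-rooted of degree $n$, and circumventing this ingredient would likely require a genuinely new idea.
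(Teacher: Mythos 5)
The statement you have written a ``proof'' for is Conjecture~\ref{mainconj2} of the paper, and the paper \emph{does not prove it} for general $p$: it is stated as an open conjecture, established only for $p=2$ (Theorem~\ref{thmp2}, and again via Proposition~\ref{conj4p2}). Your proposal is therefore not — and honestly does not claim to be — a proof; it is a reduction, and it coincides almost verbatim with the reduction the paper itself carries out in Section~\ref{sectionhomog}. Your polynomial $F$ is exactly $\binom{n}{p}\,s_{n,p}$, the semi-symmetric polynomial of Definition in Section~\ref{sectionhomog}; the programme ``show $F$ is $\Ib$-hyperbolic, apply G\aa rding's concavity theorem, restrict to the slice $y=\lambda$'' is precisely Conjecture~\ref{mainconj4} together with Proposition~\ref{hypconc}; and the obstruction you correctly hit — the real-rootedness of $\sum_{k=0}^{p}P^{(k)}Q^{(p-k)}$ for real-rooted $P,Q$ of degree $p$ — is exactly the equivalent reformulation the paper derives in Theorem~\ref{thmPQscinde} (after reducing $s_{n,p}$ to $s_{p,p}$ in Theorem~\ref{pip}) and leaves open.

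Two small remarks. Your permanent representation
\[
F(x,y)=\frac{1}{p!}\sum_{|S|=p}\operatorname{perm}\bigl((x_i+y_j)_{i\in S,\,1\le j\le p}\bigr)
\]
is correct (the identity $\tfrac{k!(p-k)!}{p!}\binom{n-k}{p-k}=\tfrac{\binom{n}{p}}{\binom{n}{k}}$ does the bookkeeping) and is not given in the paper; it is an appealing way to see that $s_{n,p}$ is a natural candidate for hyperbolicity, though by itself it does not yield hyperbolicity — permanental polynomials are not hyperbolic in general. Your $p=2$ discriminant computation is also correct: one finds, up to a positive factor, $n^{2}(y_1-y_2)^{2}+\tfrac{4}{n-1}\bigl(n\sum_i x_i^{2}-(\sum_i x_i)^{2}\bigr)\ge 0$, in agreement with (and slightly more general than) Proposition~\ref{conj4p2}. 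The genuine gap is the one you name yourself: the hyperbolicity of $s_{n,p}$ (equivalently, the real-rootedness of $\sum_k P^{(k)}Q^{(p-k)}$) for $p\ge 3$. Your two suggested routes — realizing $F$ as a specialization of some $\sigma_p$ on $\Rb^{N}$, or induction on $p$ via directional derivatives — are both plausible but undeveloped; the first runs into the difficulty that the weights $\binom{n}{p}/\binom{n}{k}$ do not arise from any integer multiplicity pattern, and the second founders because the class of hyperbolic polynomials is not closed under the operation $P\mapsto\sum_k P^{(k)}Q^{(p-k)}$ in any obvious inductive way. So the proposal should be read as a correct identification of the paper's strategy and of the open bottleneck, not as a proof.
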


We could only prove this conjecture for $p=2$, in which case the theorem \ref{thmdeg123} makes it equivalent to conjecture \ref{mainconj1}. However, the tools are different and promising for a full resolution of the conjecture.

We need to introduce some elements of the theory of hyperbolic polynomials, opened by L. G\aa rding.

\subsection{Hyperbolic polynomials}
See the seminal paper of \cite{Gar59}, or self-contained summaries in \cite{Caf85} or \cite{Harvey09}.

\begin{defi}
Let $v \in \Rb^n$, $P \in \Rb[X_1, \ldots, X_n]$. $P$ is said to be $v$-hyperbolic if
\begin{enumerate}[(i)]
 \item $P$ is homogeneous,
 \item $P(v) > 0$,
 \item for every $x \in \Rb^n$, the polynomial $P(x + X\cdot v) \in \Rb[X]$ is real-rooted.
\end{enumerate}

We write $\Gamma_v := \left\{x \in \Rb^n \quad \big| \quad P(x) > 0\right\}$. $\Gamma_v$ is called the G\aa rding cone of $P$.
\end{defi}

\begin{prop}[1959, \cite{Gar59}] \label{hypconc}
Let $P$ be a $v$-hyperbolic polynomial. Then $\Gamma_v$ is a convex cone with vertex at 0, and $P$ is $w$-hyperbolic for all $w \in \Gamma_v$.

Moreover, $P$ is $\frac{1}{\deg P}$-concave on $\Gamma_v$.
\end{prop}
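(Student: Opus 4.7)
The plan is to follow G\aa rding's classical argument in three steps. The structural properties are immediate: $\Gamma_v$ is open by continuity of $P$, and the cone property follows from homogeneity, since $P(\lambda x) = \lambda^d P(x) > 0$ for every $\lambda > 0$ whenever $x \in \Gamma_v$, writing $d = \deg P$. (For the convexity statement to hold literally, one should interpret $\Gamma_v$ as the connected component of $v$ in $\{P > 0\}$; the proven convexity will then force it to coincide with the full set $\{P>0\}$.)

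The core of the proof is a deformation argument that simultaneously gives the convexity of $\Gamma_v$ and the extension of $v$-hyperbolicity to every $w \in \Gamma_v$. Fix $u \in \Rb^n$ and $w \in \Gamma_v$, pick a continuous path $\gamma : [0,1] \to \Gamma_v$ with $\gamma(0) = v$ and $\gamma(1) = w$, and consider the family $Q_t(s) := P(u + s \gamma(t)) \in \Rb[s]$. The leading coefficient $P(\gamma(t))$ never vanishes along the path, so each $Q_t$ has exact degree $d$, and its $d$ roots (with multiplicity) depend continuously on $t$. At $t = 0$ they are all real by $v$-hyperbolicity. The main technical point, which I expect to be the principal obstacle, is to show the roots remain real throughout: they can only leave $\Rb$ in complex-conjugate pairs after a collision, and such a collision is ruled out by a discriminant/perturbation analysis, exploiting that one can slightly move $u$ into a generic direction where the initial $v$-hyperbolicity produces $d$ simple real roots, with this genericity propagating by openness. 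Granting this, $P$ is $w$-hyperbolic. Convexity then drops out: for $x, y \in \Gamma_v$, factor
$$P(x + sy) = P(y) \prod_{i=1}^d \bigl(s + \mu_i(x,y)\bigr),$$
with $\mu_i$ real by $y$-hyperbolicity. Since $\prod \mu_i = P(x)/P(y) > 0$ and the $\mu_i$ depend continuously on $(x,y)$ starting from the diagonal case $x = y$ (where all $\mu_i = 1$), no $\mu_i$ can cross zero without $P(x)$ vanishing; hence every $\mu_i > 0$ and $P(x + sy) > 0$ for all $s \geq 0$, so the segment $[x, x+y]$ lies in $\Gamma_v$.

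For the $\frac{1}{d}$-concavity of $P$ on $\Gamma_v$, specialize the factorisation to $s = 1$:
$$P(x+y) = P(y) \prod_{i=1}^d (1 + \mu_i), \qquad P(x) = P(y) \prod_{i=1}^d \mu_i, \qquad \mu_i > 0.$$
The elementary Mahler/AM-GM-type inequality
$$\left(\prod_{i=1}^d (1 + \mu_i)\right)^{1/d} \geq 1 + \left(\prod_{i=1}^d \mu_i\right)^{1/d}$$
yields G\aa rding's superadditivity
$$P(x+y)^{1/d} \geq P(x)^{1/d} + P(y)^{1/d}.$$
Applying this to $tx$ and $(1-t)y$ and invoking homogeneity gives
$$P(tx + (1-t)y)^{1/d} \geq t\, P(x)^{1/d} + (1-t)\, P(y)^{1/d},$$
which is the desired concavity of $P^{1/d}$ on the convex set $\Gamma_v$. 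The rest is routine; the only genuine obstruction is the root-tracking lemma of the second step.
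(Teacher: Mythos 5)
The paper does not prove this proposition: it only cites G\aa rding \cite{Gar59} and points to \cite{Caf85}, \cite{Harvey09} for expositions. So there is no paper proof to compare against; the question is whether your reconstruction is sound.

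Your outline is essentially the classical G\aa rding argument, and its overall structure is correct: homogeneity gives the cone property; $w$-hyperbolicity for $w\in\Gamma_v$ is obtained by a deformation/root-tracking argument; the factorisation $P(x+sy)=P(y)\prod_i(s+\mu_i)$ together with positivity of the $\mu_i$ gives convexity; and the Mahler/Minkowski inequality $\bigl(\prod_i(1+\mu_i)\bigr)^{1/d}\geq 1+\bigl(\prod_i\mu_i\bigr)^{1/d}$ yields superadditivity of $P^{1/d}$, hence concavity by homogeneity. Your parenthetical caveat about $\Gamma_v$ is in fact necessary and worth flagging louder: the paper's literal definition $\Gamma_v=\{P>0\}$ is not what one wants (take $P=X_1X_2$, $v=(1,1)$: $\{P>0\}$ is the disconnected union of the first and third open quadrants, while the G\aa rding cone is the first quadrant alone). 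The correct object is the connected component of $v$ in $\{P\neq 0\}$.

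The genuine gap is exactly where you say it is: the root-tracking lemma. As written, ``one can slightly move $u$ into a generic direction\ldots with this genericity propagating by openness'' is not an argument. Openness of the set of polynomials with $d$ distinct real roots gives you that a perturbation of a \emph{simple}-rooted real polynomial stays real-rooted, but you still have to rule out that as $t$ runs from $0$ to $1$ two roots collide and leave the real line, and your sketch gives no mechanism doing so. The standard resolutions in the literature are more substantial: G\aa rding's own proof is essentially a cone-embedding and continuity-of-roots argument showing that $v+\epsilon w$ never exits the hyperbolicity cone; another route (Nuij, Renegar) proves that the set of hyperbolic polynomials in a fixed direction is path-connected and the number of real roots of $P(u+s\gamma(t))$ is both lower semicontinuous in $t$ (by Hurwitz) and generically maximal, forcing it to be constant. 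Either version requires a real lemma, not a genericity slogan. Everything downstream of that lemma in your proof is fine.
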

This is shown in \cite{Gar59}, and an explicit formulation can be found in \cite{Caf85} or \cite{Harvey09}.

One will now establish this property for the $\sigma_k$ functions.
\begin{lem}
Let $\lambda \in \Rb^l$, $k \in \Nb$. Then
\begin{align}
\sigma_k(\lambda + X\cdot\Ib) &= \dfrac{1}{(n-k)!} \sigma_n(\lambda + X\cdot\Ib)^{(n-k)} \label{sigmakXder} \\
&= \binom{l-k}{0}\sigma_k(\lambda)X^0 + \binom{l-k+1}{1}\sigma_{k-1}(\lambda)X^1 + \ldots + \binom{l-1}{k-1}\sigma_1(\lambda)X^{k-1} + \binom{l}{k}\sigma_0(\lambda)X^k. \label{sigmakXsum}
\end{align}
\end{lem}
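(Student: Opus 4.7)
The plan is to derive both equalities from the explicit product factorisation
\[
\sigma_l(\lambda + X\cdot\Ib) \;=\; \prod_{i=1}^l (\lambda_i + X),
\]
reading the ``$n$'' in the statement as the dimension $l$ of $\lambda$ (so that $\sigma_n(\lambda + X\cdot\Ib)$ is this full product of linear factors).

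First I would establish the derivative identity \eqref{sigmakXder} by applying the generalised Leibniz rule to the $(l-k)$-th derivative of $\prod_{i=1}^l (\lambda_i + X)$. Each linear factor has first derivative $1$ and vanishing higher derivatives, so only multi-indices $(m_1, \ldots, m_l) \in \{0,1\}^l$ with $\sum_i m_i = l-k$ contribute. The corresponding multinomial coefficient is then simply $(l-k)!$, and the surviving product equals $\prod_{i \in S}(\lambda_i + X)$, where $S = \{i : m_i = 0\}$ has cardinality $k$. Summing over all such $S$ and invoking the very definition of the elementary symmetric polynomial gives
\[
\sigma_l(\lambda + X\cdot\Ib)^{(l-k)} \;=\; (l-k)! \sum_{|S|=k} \prod_{i \in S}(\lambda_i + X) \;=\; (l-k)!\, \sigma_k(\lambda + X\cdot\Ib),
\]
which is the first equality.

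Next I would obtain the explicit expansion \eqref{sigmakXsum} by first expanding the same product in powers of $X$,
\[
\sigma_l(\lambda + X\cdot\Ib) = \prod_{i=1}^l (X + \lambda_i) = \sum_{j=0}^l \sigma_j(\lambda)\, X^{l-j},
\]
then differentiating term by term: $(X^{l-j})^{(l-k)} = \frac{(l-j)!}{(k-j)!} X^{k-j}$ for $j \leq k$ and vanishes otherwise. Dividing by $(l-k)!$ and reindexing via $i = k-j$, together with the symmetry $\binom{l-j}{l-k} = \binom{l-j}{k-j} = \binom{l-k+i}{i}$, gives
\[
\sigma_k(\lambda + X\cdot\Ib) = \sum_{i=0}^{k} \binom{l-k+i}{i}\,\sigma_{k-i}(\lambda)\, X^i,
\]
which when written out term by term is exactly the claimed polynomial.

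There is no real obstacle here; the argument is a mechanical combination of Leibniz's rule and of the expansion of a product of linear forms. The only points requiring care are the reindexing $j \leftrightarrow i = k-j$ and the binomial-coefficient symmetry at the end, plus the minor clarification that the ``$n$'' appearing in the first displayed line of the statement must be read as $l = \dim \lambda$ for the formula to make sense (otherwise $\sigma_n$ of an $l$-tuple would have the wrong degree).
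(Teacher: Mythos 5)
Your proof is correct, and your reading of the ambiguous ``$n$'' in the first displayed line as $l = \dim\lambda$ is the intended one (the paper's own proof also slips between $n$ and $l$; only $n=l$ makes both lines consistent and makes $\sigma_n$ of an $l$-tuple sensible). For \eqref{sigmakXsum}, you reproduce the paper's second, shorter derivation exactly: expand $\prod_i(X+\lambda_i) = \sum_j\sigma_j(\lambda)X^{l-j}$ and differentiate term by term, collecting the binomials. For \eqref{sigmakXder} you diverge in a small but genuine way: you apply the generalised Leibniz rule directly to the product of $l$ linear factors, note that only multi-indices in $\{0,1\}^l$ survive, and land on $(l-k)!\,\sigma_k(\lambda+X\cdot\Ib)$ in one shot. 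The paper instead establishes the single-step recursion $\frac{d}{dX}\sigma_{k+1}(\lambda+X\cdot\Ib)=(l-k)\,\sigma_k(\lambda+X\cdot\Ib)$ from the structural identities $\drond_{x_i}\sigma_{k+1}(x)=\sigma_k(x'_i)$ and $\sum_i\sigma_k(x'_i)=(l-k)\sigma_k(x)$, then iterates it down from $\sigma_l$. Your version is slightly more self-contained, depending only on the factorisation into linear factors; the paper's version is deliberately phrased so that it carries over verbatim to arbitrary hyperbolic polynomials (as the remark immediately after the theorem points out: for $v$-hyperbolic $P$, the polynomial $\sum_i v_i\drond_{x_i}P$ is again $v$-hyperbolic), a generality that does no work here but foreshadows the machinery of section \ref{sectionhomog}.
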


\begin{proof}
\eqref{sigmakXder} comes from the fact that for $x \in \Rb^l$,
\[\drond_{x_i} \sigma_{k+1}(x) = \sigma_{k}(x'_i).\]
Yet, counting each term leads to
\[\sum_{i=1}^n \sigma_{k}(x'_i) = (n-k)\sigma_k(x),\]
so
\begin{align*}
\sigma_{k+1}(\lambda + X\cdot\Ib)'&= \sum_{i=1}^n \drond_{x_i}\sigma_{k+1}(\lambda + X\cdot\Ib) \\
&= (n-k)\sigma_{k}(\lambda + X\cdot\Ib).
\end{align*}

\eqref{sigmakXsum} can be proved by double induction on $k$ and $l$:
\begin{align*}
\sigma_k(\lambda + X\cdot\Ib) &= (\lambda_1 + X)\sigma_{k-1}(\lambda'_1 + X\cdot\Ib'_1) + \sigma_{k}(\lambda'_1 + X\cdot\Ib'_1) \\
&= (\lambda_1 + X)\sum_{i=0}^{k-1}\binom{(l-1)-(k-1)+i}{i}\sigma_{k-1-i}(\lambda'_1)X^i \\
&\qquad + \sum_{j=0}^k \binom{(l-1)-k+j}{j}\sigma_{k-j}(\lambda'_1)X^j \quad \text{by induction hypothesis,} \\
&= \sum_{i=0}^{k-1}\left[\binom{l-k+i}{i}\sigma_{k-1-i}(\lambda'_1)\lambda_1 + \binom{l-1-k+i}{i}\sigma_{k-i}(\lambda'_1)\right]X^i \\
&\qquad + \sum_{i=0}^{k-1}\binom{l-k+i}{i}\sigma_{k-1-i}(\lambda'_1)X^{i+1} + \binom{l-1-k+k}{k}\sigma_{k-k}(\lambda'_1)X^k \\
&= \left[\binom{l-k+0}{0}\sigma_{k-1-0}(\lambda'_1)\lambda_1 + \binom{l-1-k+0}{0}\sigma_{k-0}(\lambda'_1)\right]X^0 \\
&\qquad + \sum_{i=1}^{k-1}\left[\binom{l-k+i}{i}\sigma_{k-1-i}(\lambda'_1)\lambda_1 + \binom{l-1-k+i}{i}\sigma_{k-i}(\lambda'_1) + \binom{l-k+i-1}{i-1}\sigma_{k-i}(\lambda'_1)\right]X^i \\
&\qquad + \binom{l-k+k-1}{k-1}\sigma_{k-k}(\lambda'_1)X^k + \binom{l-1-k+k}{k}\sigma_{k-k}(\lambda'_1)X^k \\
&= \sigma_k(\lambda) X^0 + \sum_{i=1}^{k-1}\binom{l-k+i}{i}\sigma_{k-i}(\lambda)X^i + \binom{l}{k}X^k \\
&= \sum_{i=0}^{k}\binom{l-k+i}{i}\sigma_{k-i}(\lambda)X^i.
\end{align*}
It can also be derived directly from \eqref{sigmakXder}: let $0 \leq p \leq n$.
\begin{align*}
\sigma_n(\lambda + X\cdot\Ib)^{(p)} &= \left[(X+\lambda_1)\ldots(X+\lambda_n)\right]^{(p)} \\
&= \left[\sigma_n(\lambda)X^0 + \sigma_{n-1}(\lambda)X^1 + \ldots + \sigma_{0}(\lambda)X^n\right]^{(p)} \\
&= 0 + 0 + \ldots + p(p-1)\ldots.2.1\sigma_{n-p}(\lambda)X^0 + \ldots + n(n-1)\ldots(n-p+1)\sigma_{0}(\lambda)X^{n-p} \\
&= p!\left[\binom{p}{0} \sigma_{n-p}(\lambda)X^0 + \ldots + \binom{n}{n-p}\sigma_{0}(\lambda)X^{n-p}\right].
\end{align*}
\end{proof}

Hence one can deduce this theorem:
\begin{thm}[1959, \cite{Gar59}] \label{sigmakhyp}
For $0 \leq k \leq n$, $\sigma_k$ is $\Ib$-hyperbolic.
\end{thm}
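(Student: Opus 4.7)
The plan is to verify directly the three defining conditions of $\Ib$-hyperbolicity, then reduce condition (iii) to Rolle's theorem via the differential identity \eqref{sigmakXder} from the preceding lemma.

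First, condition (i) is immediate: $\sigma_k$ is homogeneous of degree $k$ by definition. Condition (ii) also follows by inspection, since evaluating at the all-ones vector gives $\sigma_k(\Ib) = \binom{n}{k} > 0$ for $0 \leq k \leq n$. The substantive content is condition (iii): showing that for every $x \in \Rb^n$, the univariate polynomial $\sigma_k(x + X \cdot \Ib) \in \Rb[X]$ is real-rooted.

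For this, I would use identity \eqref{sigmakXder}, which reads
\[\sigma_k(x + X\cdot \Ib) = \dfrac{1}{(n-k)!}\, \sigma_n(x + X\cdot \Ib)^{(n-k)}.\]
Now observe that $\sigma_n$ is simply the product of the coordinates, hence
\[\sigma_n(x + X\cdot \Ib) = \prod_{i=1}^n (X + x_i) \in \Rb[X],\]
which is manifestly real-rooted for any $x \in \Rb^n$ (its roots are precisely $-x_1, \ldots, -x_n$, all real).

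It then remains to invoke Rolle's theorem: the derivative of a real-rooted polynomial with real coefficients is itself real-rooted, since between any two consecutive roots (counted with multiplicity) one finds a root of the derivative, and iterating accounts for all roots of the derivative by a degree count. Applying this $(n-k)$ times to $\sigma_n(x + X\cdot \Ib)$ shows that $\sigma_n(x + X\cdot \Ib)^{(n-k)}$, and therefore $\sigma_k(x + X\cdot\Ib)$, is real-rooted. This establishes (iii) and completes the verification that $\sigma_k$ is $\Ib$-hyperbolic. No real obstacle is expected here — all tools are already in place from the preceding lemma, and the argument is short and essentially mechanical.
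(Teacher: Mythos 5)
Your proof is correct and follows exactly the paper's argument: the paper's entire proof is the one-line ``It is only \eqref{sigmakXder} with Rolle's lemma,'' which is precisely your reduction of $\sigma_k(x + X\cdot\Ib)$ to the $(n-k)$-th derivative of $\prod_i(X+x_i)$. You have simply unpacked the verification of conditions (i) and (ii) and the iterated application of Rolle's theorem, which the paper leaves implicit.
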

\begin{proof}
It is only \eqref{sigmakXder} with Rolle's lemma.
\end{proof}

\begin{rem}
One can hold the same reasoning for a general hyperbolic polynomial: for $v=(v_1, \ldots, v_n) \in \Rb^n$ and $P$ a $v$-hyperbolic polynomial, $Q = \sum_i v_i \drond_{x_i}P$ is $v$-hyperbolic as well.
\end{rem}

\begin{cor} \label{sigmaxsigmamu}
For all $x \in \Rb^n$, $0 \leq k \leq n$, there exists $\mu \in \Rb^k$ such that for all $0 \leq l \leq k$,
\[\sigma_l(x) = \dfrac{\binom{n}{k}}{\binom{n-l}{k-l}}\sigma_l(\mu).\]
\end{cor}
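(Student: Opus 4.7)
The plan is to read off the vector $\mu$ directly from the roots of the polynomial $X \mapsto \sigma_k(x + X\cdot\Ib)$. By theorem \ref{sigmakhyp}, $\sigma_k$ is $\Ib$-hyperbolic, so condition $(iii)$ of the definition guarantees that this polynomial in $X$ is real-rooted for every $x \in \Rb^n$. It has degree $k$, and by formula \eqref{sigmakXsum} applied with $l = n$ and $\lambda = x$, its leading coefficient is $\binom{n}{k}$ (recall $\sigma_0(x) = 1$).

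Since $\sigma_k(x + X\cdot\Ib)$ is real-rooted of degree $k$ with leading coefficient $\binom{n}{k}$, we can factor it as
\[\sigma_k(x + X\cdot\Ib) = \binom{n}{k} \prod_{j=1}^k (X + \mu_j)\]
where $\mu = (\mu_1, \ldots, \mu_k) \in \Rb^k$ is obtained by taking the opposites of the roots. Expanding the product yields
\[\sigma_k(x + X\cdot\Ib) = \binom{n}{k} \sum_{i=0}^k \sigma_{k-i}(\mu) X^i.\]

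On the other hand, formula \eqref{sigmakXsum} gives the explicit expansion
\[\sigma_k(x + X\cdot\Ib) = \sum_{i=0}^k \binom{n-k+i}{i} \sigma_{k-i}(x) X^i.\]

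Comparing the coefficients of $X^i$ in these two expressions (and setting $l := k-i$, so that $i$ ranges over $0, \ldots, k$ iff $l$ does), we obtain
\[\binom{n-l}{k-l} \sigma_l(x) = \binom{n}{k} \sigma_l(\mu),\]
which is the claimed identity after dividing by $\binom{n-l}{k-l}$. There is no real obstacle here: the only mild subtlety is that the roots of $\sigma_k(x + X\cdot\Ib)$ need not be non-positive for arbitrary $x \in \Rb^n$, which is why the statement only asks $\mu \in \Rb^k$ rather than $\mu \in \Rb_+^k$. The whole argument rests on the hyperbolicity of $\sigma_k$ combined with the combinatorial identity \eqref{sigmakXsum}, both of which are already established.
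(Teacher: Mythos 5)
Your proof is correct and follows essentially the same route as the paper: invoke theorem \ref{sigmakhyp} to get real-rootedness of $\sigma_k(x + X\cdot\Ib)$, factor it as $\binom{n}{k}\prod_j(X+\mu_j)$, and compare coefficients with the expansion \eqref{sigmakXsum}. The only difference is cosmetic --- you pin down the leading coefficient $\binom{n}{k}$ up front rather than reading it off during the coefficient comparison --- and your remark that $\mu$ need only lie in $\Rb^k$ rather than $\Rb_+^k$ is a correct and slightly more explicit observation than the paper makes.
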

\begin{proof}
Let $x \in \Rb^n$, $0 \leq k \leq n$. According to theorem \ref{sigmakhyp}, $\sigma_k(x + X\cdot\Ib)$ is real-rooted, so there exist $\alpha \in \Rb$, $\mu \in \Rb^n$ such that
\begin{align*}
\sigma_k(x + X\cdot\Ib) &= \alpha(X+\mu_1)\ldots(X+\mu_k) \\
&= \alpha\left[\sigma_k(\mu)X^0 + \ldots + \sigma_0(\mu)X^k\right].
\end{align*}
Meanwhile, \eqref{sigmakXsum} gives that
\[\sigma_k(x + X\cdot\Ib) = \binom{n-k}{k-k}\sigma_k(x)X^0 + \binom{n-k+1}{k-k+1}\sigma_{k-1}(x)X^1 + \ldots + \binom{n-1}{k-1}\sigma_1(x)X^{k-1} + \binom{n}{k}\sigma_0(x)X^k.\]
Identifying the coefficients, we obtain that $\alpha = \binom{n}{k}$, and for all $0 \leq l \leq k$,
\[\binom{n-l}{k-l}\sigma_l(x) = \binom{n}{k} \sigma_l(\mu).\]
\end{proof}

Let us come back to our main problem. One way to show that $f_{\vec{a}}$ is $\frac{1}{p}$-concave would be to prove that it is a hyperbolic polynomial. However, $f_{\vec{a}}$ does not satisfy the first essential property of hyperbolic polynomial: it is not homogeneous. In theorem $\ref{Pascinde}$, we introduced new variables that made $f_{\vec{a}}$ homogeneous by assuming that $P_{\vec{a}}$ is real-rooted. In the next paragraph we shall suppose something weaker: that $\bar{f}_{\vec{a}}$ is real-rooted.

\subsection{Semi-symmetric polynomials}
\begin{lem}
Let $\vec{a} = (a_0, a_1, \ldots, a_p) \in \Rb_+^{p+1}$. Let us suppose that
\[\bar{f}_{\vec{a}} = \binom{n}{0}a_0 + \binom{n}{1}a_1X + \ldots + \binom{n}{p}a_pX^p\]
is real-rooted. Then there exists some $\lambda = (\lambda_1, \ldots, \lambda_p) \in \Rb_+^p$ such that
\[\frac{\binom{n}{k}a_k}{\binom{n}{p}a_p} = \sigma_{p-k}(\lambda), \qquad 0 \leq k \leq p.\]
Thus
\begin{align*}
f_{\vec{a}}(x) &= a_0 + a_1 \sigma_1(x) + \ldots + a_p \sigma_p(x) \\
&= \binom{n}{p}a_p\left[\frac{\sigma_0(x)\sigma_{p-0}(\lambda)}{\binom{n}{0}} + \frac{\sigma_1(x)\sigma_{p-1}(\lambda)}{\binom{n}{1}} + \ldots + \frac{\sigma_p(x)\sigma_{p-p}(\lambda)}{\binom{n}{p}}\right].
\end{align*}
\end{lem}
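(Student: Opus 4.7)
The claim is essentially a bookkeeping exercise; the plan is as follows.

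\textbf{Step 1 (factoring $\bar{f}_{\vec{a}}$).} The polynomial $\bar{f}_{\vec{a}}$ has non-negative coefficients (since each $a_k \geq 0$ and each $\binom{n}{k} \geq 0$). If $a_p = 0$ the statement is trivial, so assume $a_p > 0$; then $\bar{f}_{\vec{a}}$ has degree exactly $p$. By assumption it is real-rooted, and any positive real number $r > 0$ would give $\bar{f}_{\vec{a}}(r) \geq \binom{n}{p}a_p r^p > 0$, so none of its roots can be positive. Therefore one may write
\[\bar{f}_{\vec{a}}(X) \;=\; \binom{n}{p}a_p \,(X+\lambda_1)\cdots(X+\lambda_p)\]
for some $\lambda = (\lambda_1, \ldots, \lambda_p) \in \Rb_+^p$.

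\textbf{Step 2 (identifying coefficients).} Expanding the product with elementary symmetric polynomials,
\[\bar{f}_{\vec{a}}(X) \;=\; \binom{n}{p}a_p \sum_{k=0}^{p} \sigma_{p-k}(\lambda)\, X^{k}.\]
Comparing with $\bar{f}_{\vec{a}} = \sum_{k=0}^{p}\binom{n}{k} a_k X^{k}$, identification of the coefficient of $X^{k}$ yields
\[\binom{n}{k} a_k \;=\; \binom{n}{p} a_p\, \sigma_{p-k}(\lambda), \qquad 0 \leq k \leq p,\]
which is exactly the first asserted relation.

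\textbf{Step 3 (rewriting $f_{\vec{a}}$).} Solving the relation for $a_k$ and plugging into $f_{\vec{a}}(x)=\sum_{k=0}^p a_k \sigma_k(x)$ gives
\[f_{\vec{a}}(x) \;=\; \sum_{k=0}^{p} \frac{\binom{n}{p}a_p}{\binom{n}{k}}\,\sigma_{p-k}(\lambda)\,\sigma_k(x) \;=\; \binom{n}{p}a_p \sum_{k=0}^{p} \frac{\sigma_k(x)\,\sigma_{p-k}(\lambda)}{\binom{n}{k}},\]
which is the second assertion. There is no real obstacle here; the only point that deserves a line of justification is the sign of the roots in Step~1, and the rest is pure expansion and identification.
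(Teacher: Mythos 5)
Your proof is correct, and the paper itself does not supply a written proof for this lemma --- it is stated and left to the reader as routine. Your argument (factor the real-rooted polynomial, observe that the non-negative coefficients with positive leading term force all roots to be non-positive, expand via elementary symmetric functions of the negated roots, identify coefficients, substitute back) is exactly the intended Vieta-style argument, and it is the same reasoning the author uses explicitly a few pages earlier in the proof of Theorem \ref{Pascinde}. The only cosmetic remark is that when $a_p = 0$ the displayed ratio $\binom{n}{k}a_k/\binom{n}{p}a_p$ is not defined, so the lemma should be read with the standing assumption $a_p > 0$; calling that case ``trivial'' is a slight abuse, but it does not affect the substance.
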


We introduce the following functions.
\begin{defi}
Let $n, p \in \Nb$. We call \textbf{semi-symmetric polynomials} the polynomials
\[s_{n,p} : (x, \lambda) \in \Rb^n \times \Rb^p \longmapsto \sum_{k=0}^p \dfrac{\sigma_k(x)\sigma_{p-k}(\lambda)}{\binom{n}{k}}.\]
$s_{n,p} \in \Rb[X_1, \ldots, X_n,X_{n+1}, \ldots, X_{n+p}]$ is $p$-homogeneous, symmetric in the coefficients of $x$ and $\lambda$, but not between each other.

For $\mu, \lambda \in \Rb^p$, we define
\[\pi_p(\mu, \lambda) = s_{p,p}(\mu + X\cdot\Ib,\lambda + X\cdot\Ib) \in \Rb[X].\]
\end{defi}

Then the conjecture \ref{mainconj2} is implied by the much wider statement:
\begin{conj} \label{mainconj4}
$s_{n,p}$ is a $\Ib$-hyperbolic polynomial.
\end{conj}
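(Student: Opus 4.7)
The plan is to verify the three defining conditions of $\Ib$-hyperbolicity of $s_{n,p}$ one after the other; the real-rootedness requirement will reduce, after a substantial but mechanical manipulation, to the algebraic statement flagged in the abstract.

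\textbf{Step 1 (homogeneity and positivity at $\Ib$).} The polynomial $s_{n,p}$ is $p$-homogeneous in $(x,\lambda)$ by its very definition. Moreover, since $\sigma_k(\Ib_n) = \binom{n}{k}$ and $\sigma_{p-k}(\Ib_p) = \binom{p}{k}$,
\[
s_{n,p}(\Ib, \Ib) = \sum_{k=0}^p \dfrac{\binom{n}{k}\binom{p}{k}}{\binom{n}{k}} = 2^p > 0,
\]
so the first two conditions of $\Ib$-hyperbolicity are satisfied.

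\textbf{Step 2 (reduction to the square case $n=p$).} Fix $x \in \Rb^n$ and consider $Q_x(Z) := \sigma_p(x + Z\cdot\Ib) \in \Rb[Z]$. This polynomial has degree exactly $p$ with leading coefficient $\binom{n}{p}$, and it is real-rooted because $\sigma_p$ is $\Ib$-hyperbolic (Theorem \ref{sigmakhyp}); write $Q_x(Z) = \binom{n}{p}\prod_{i=1}^p(Z + \mu_i(x))$ with $\mu(x) \in \Rb^p$. Matching coefficients, exactly as in the proof of Corollary \ref{sigmaxsigmamu}, yields
\[
\sigma_l(x) = \dfrac{\binom{n}{p}}{\binom{n-l}{p-l}}\sigma_l(\mu(x)), \qquad 0 \leq l \leq p.
\]
Combined with the identity $\binom{n}{k}\binom{n-k}{p-k} = \binom{n}{p}\binom{p}{k}$, the defining sum of $s_{n,p}$ collapses into $s_{n,p}(x, \lambda) = s_{p,p}(\mu(x), \lambda)$. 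Translating $x \to x + W\cdot\Ib$ simply shifts every root by $-W$, so $\mu(x + W\cdot\Ib) = \mu(x) + W\cdot\Ib$ (the second $\Ib$ lying in $\Rb^p$); consequently
\[
s_{n,p}(x + X\cdot\Ib, \lambda + X\cdot\Ib) = \pi_p(\mu(x), \lambda)(X),
\]
so the $\Ib$-hyperbolicity of $s_{n,p}$ is reduced to that of $s_{p,p}$.

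\textbf{Step 3 (reduction to a polynomial convolution identity).} Fix $\mu, \lambda \in \Rb^p$, and set $Q(X) := \prod_i(X + \mu_i)$, $R(X) := \prod_j(X + \lambda_j)$; both are real-rooted of degree $p$. Formula \eqref{sigmakXder} gives $\sigma_k(\mu + X\cdot\Ib) = Q^{(p-k)}/(p-k)!$ and $\sigma_{p-k}(\lambda + X\cdot\Ib) = R^{(k)}/k!$. Substituting into $\pi_p$ and using $\binom{p}{k}^{-1}(p-k)!^{-1}(k!)^{-1} = 1/p!$ produces
\[
\pi_p(\mu, \lambda)(X) = \dfrac{1}{p!}\sum_{k=0}^p Q^{(p-k)}(X)\, R^{(k)}(X).
\]

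\textbf{Step 4 (the remaining obstacle).} It remains to prove exactly the algebraic statement advertised in the abstract: \emph{if two real-rooted polynomials $Q, R \in \Rb[X]$ of degree $p$ are given, then the symmetric convolution $\sum_{k=0}^p Q^{(p-k)} R^{(k)}$ is itself real-rooted.} This is the main obstacle, and it is explicitly acknowledged as open in the present paper. Natural attempts include an induction on $p$ using Rolle's theorem or a Hermite-Biehler-style interlacing argument, deforming the roots of $R$ continuously while controlling those of the convolution, but I do not see that any of them can be pushed through without new input.
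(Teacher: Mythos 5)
Your Step 4 concedes the key point, so this is not a proof; but no proof is possible here with the paper's own tools, because the statement you were given is a \emph{conjecture} in the paper (the author proves it only for $p=2$ in Proposition \ref{conj4p2}) and is left open in general. What you have done is faithfully and correctly reconstruct the paper's reduction scaffolding around the conjecture, and in one place improve on it. Your Step 2 is the content of Theorem \ref{pip}: you reduce $\Ib$-hyperbolicity of $s_{n,p}$ to that of $s_{p,p}$ by first invoking Corollary \ref{sigmaxsigmamu} to write $s_{n,p}(x,\lambda) = s_{p,p}(\mu(x),\lambda)$ (using the correct binomial identity $\binom{n}{k}\binom{n-k}{p-k} = \binom{n}{p}\binom{p}{k}$) and then observing that the substitution $x\mapsto x+X\cdot\Ib$ shifts every $\mu_i$ by $X$; the paper instead carries out a long expansion of $s_{n,p}(x+X\cdot\Ib,\lambda+X\cdot\Ib)$ in powers of $X$ and only applies Corollary \ref{sigmaxsigmamu} at the end, so your route is cleaner and equally rigorous. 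Your Step 3 is exactly Theorem \ref{thmPQscinde}, via formula \eqref{sigmakXder}: $\pi_p(\mu,\lambda) = \frac{1}{p!}\sum_{k=0}^p Q^{(p-k)}R^{(k)}$ with $Q,R$ the monic real-rooted polynomials with roots $-\mu_i$, $-\lambda_j$.

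The genuine gap is the one you name: the real-rootedness of $\sum_{k=0}^p Q^{(p-k)}R^{(k)}$ for arbitrary real-rooted $Q,R$ of degree $p$. The paper does not close it either; it is precisely the ``short algebraic statement'' announced in the abstract as unproven, and the paper offers only the $p=2$ verification (Proposition \ref{conj4p2}) and the further speculative strengthening (Conjecture \ref{conjgausslucas}) as evidence. Your honest assessment of the obstacle is accurate. A small caution on presentation: since the conclusion of the argument is not reached, the reductions in Steps 1--3 should be labelled as such (``the conjecture is equivalent to \ldots'') rather than as steps of a proof, which is how the paper itself phrases Theorems \ref{pip} and \ref{thmPQscinde}.
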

With such a result $s_{n,p}$ would be $\frac{1}{p}$-concave, and its restriction to $\Rb^n$ as well, ie. $f_{\vec{a}}$.

\begin{thm} \label{pip}
Conjecture \ref{mainconj4} is equivalent to show that $s_{p,p}$ is a $\Ib$-hyperbolic polynomial, or equivalently that $\pi_p(\mu, \lambda)$ is real-rooted, for all $\lambda, \mu \in \Rb^p$.

Moreover,
\[\pi_p(\mu,\lambda) = \sum_{l=0}^p \left(\sum_{i=0}^{p-l} \dfrac{(p-i)!(l+i)!}{p!l!} \sigma_{p-l-i}(\mu)\sigma_i(\lambda)\right) (2X)^l.\]
\end{thm}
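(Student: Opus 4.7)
My plan is two-fold: first, to prove the three-way equivalence by reducing the general hyperbolicity of $s_{n,p}$ to that of $s_{p,p}$, which by definition is the real-rootedness of $\pi_p$; second, to derive the closed form for $\pi_p$. Homogeneity of $s_{n,p}$ of degree $p$ and the positivity $s_{n,p}(\Ib, \Ib) = \sum_{k=0}^p \binom{p}{p-k} = 2^p > 0$ are immediate, so $\Ib$-hyperbolicity amounts to real-rootedness of $s_{n,p}(x + X \cdot \Ib, \lambda + X \cdot \Ib)$ in $X$. For $n = p$ this is by definition the real-rootedness of $\pi_p(\mu, \lambda)$, and conjecture \ref{mainconj4} trivially specializes to this case, so the only substantive claim is that real-rootedness of every $\pi_p$ implies hyperbolicity of $s_{n,p}$ for every $n \geq p$.

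For the reduction, I would fix $x \in \Rb^n$ and apply corollary \ref{sigmaxsigmamu} with $k = p$ to obtain $\mu \in \Rb^p$ with $\sigma_l(x) = \frac{\binom{n}{l}}{\binom{p}{l}} \sigma_l(\mu)$ for $0 \leq l \leq p$ (after simplifying $\binom{n}{p}/(\binom{n-l}{p-l}\binom{n}{l}) = 1/\binom{p}{l}$). The key claim is that this numerical identity lifts to the polynomial identity
\[\sigma_l(x + X \cdot \Ib) = \frac{\binom{n}{l}}{\binom{p}{l}} \sigma_l(\mu + X \cdot \Ib), \qquad 0 \leq l \leq p.\]
Expanding both sides via \eqref{sigmakXsum} and matching the coefficient of $X^i$, this reduces to the factorial identity $\binom{n-l+i}{i} \binom{n}{l-i} \binom{p}{l} = \binom{p-l+i}{i} \binom{p}{l-i} \binom{n}{l}$, routinely checked. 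Plugging back into the definition of $s_{n,p}$ and using $\frac{1}{\binom{n}{k}} \cdot \frac{\binom{n}{k}}{\binom{p}{k}} = \frac{1}{\binom{p}{k}}$, all terms collapse to
\[s_{n,p}(x + X \cdot \Ib, \lambda + X \cdot \Ib) = s_{p,p}(\mu + X \cdot \Ib, \lambda + X \cdot \Ib) = \pi_p(\mu, \lambda),\]
closing the reduction.

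For the closed form, I would expand each factor $\sigma_k(\mu + X \cdot \Ib)$ and $\sigma_{p-k}(\lambda + X \cdot \Ib)$ in the definition of $\pi_p$ by \eqref{sigmakXsum}, producing a triple sum over $k \in \{0, \ldots, p\}$ and over $i + j = l$ for the coefficient of $X^l$. The cleanest reindexing introduces new indices $a = p - k - i$ and $b = k - j$ on the elementary symmetric polynomials of $\mu$ and $\lambda$; the constraint $i + j = l$ becomes $a + b = p - l$ and, for each such fixed $(a, b)$, the outer index $k$ runs over the $l + 1$ integers from $b$ to $b + l$. The inner sum over $k$ then collapses to $\sum_{m=0}^l \binom{l}{m}/l! = 2^l/l!$, producing the $(2X)^l$ factor; relabeling $b \to i$ yields the announced expression.

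The main obstacle throughout is not conceptual but combinatorial: the two binomial/factorial identities at the heart of the proof — one to lift the corollary in the reduction, one to evaluate the inner sum in the coefficient calculation — must be checked by explicit manipulation. The conceptual payoff is that the theorem compresses the apparently $(n+p)$-variable hyperbolicity problem into a univariate real-rootedness question depending on only $2p$ parameters, placing it within reach of the standard machinery on real-rooted polynomials.
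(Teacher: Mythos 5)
Your proof is correct and takes a genuinely different route from the paper's. The paper expands $s_{n,p}(x+X\cdot\Ib,\lambda+X\cdot\Ib)$ as a triple sum via \eqref{sigmakXsum}, performs two successive changes of summation index, evaluates the inner sum to produce the $2^l$, and only at the end invokes corollary \ref{sigmaxsigmamu} to replace $\sigma_i(x)$ by a multiple of $\sigma_i(\mu)$ and recognise the result as $s_{p,p}(\mu+X\cdot\Ib,\lambda+X\cdot\Ib)$. You instead factor the argument: first you lift the numerical identity of corollary \ref{sigmaxsigmamu} to the polynomial identity $\sigma_l(x+X\cdot\Ib)=\tfrac{\binom{n}{l}}{\binom{p}{l}}\sigma_l(\mu+X\cdot\Ib)$ (which I checked: expanding both sides by \eqref{sigmakXsum} reduces to $\binom{n-l+i}{i}\binom{n}{l-i}\binom{p}{l}=\binom{p-l+i}{i}\binom{p}{l-i}\binom{n}{l}$, and both sides equal $\tfrac{n!\,(p-l+i)!}{p!\,(n-l)!\,i!}$), which makes $s_{n,p}(x+X\cdot\Ib,\lambda+X\cdot\Ib)=s_{p,p}(\mu+X\cdot\Ib,\lambda+X\cdot\Ib)$ a one-line consequence. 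You then only need the closed-form calculation in the case $n=p$, where the inner sum again telescopes to $2^l/l!$ after pulling out $(p-a)!(p-b)!/p!$ with $p-a=l+b$, matching the stated formula. Your modular decomposition is cleaner and isolates a polynomial identity of some independent interest, at the cost of having to verify the lifted identity. One small caution: your reindexing $a=p-k-i$, $b=k-j$ implicitly assumes you expanded $\sigma_{p-k}$ in the $\mu$-factor and $\sigma_k$ in the $\lambda$-factor, i.e.\ you used the substitution $k\mapsto p-k$ in the definition of $s_{p,p}$ first; this is legitimate since $\binom{p}{k}=\binom{p}{p-k}$, but the proof should say so explicitly to avoid a mismatch with the definition $s_{p,p}=\sum_k \sigma_k(\mu)\sigma_{p-k}(\lambda)/\binom{p}{k}$.
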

\begin{proof}
Let $x \in \Rb^n$, $\lambda \in \Rb^p$. Then
\begin{align*}
s_{n,p}(x+X\cdot\Ib, \lambda+X\cdot\Ib) &= \sum_{k=0}^p \dfrac{\sigma_k(x+X\cdot\Ib)\sigma_{p-k}(\lambda+X\cdot\Ib)}{\binom{n}{k}} \\
&= \sum_{k=0}^p \binom{n}{k}^{-1} \left(\sum_{i=0}^k\binom{n-k+i}{i}\sigma_{k-i}(x)X^i\right)\left(\sum_{j=0}^{p-k}\binom{p-(p-k)+j}{j}\sigma_{(p-k)-j}(\lambda)X^j\right) \\
&= \sum_{k=0}^p \sum_{i=0}^k \sum_{j=0}^{p-k} \dfrac{(n-k+i)!(k+j)!}{n!i!j!}\sigma_{k-i}(x)\sigma_{(p-k)-j}(\lambda) X^{i+j}.
\end{align*}
We used \eqref{sigmakXsum}. We proceed to the change of variable $l = i+j$ and use the fact that $\sigma_{k-i}(x) = 0$ when $i > k$:
\begin{align*}
s_{n,p}(x+X\cdot\Ib, \lambda+X\cdot\Ib)
&= \sum_{k=0}^p \sum_{l=0}^p \sum_{i=0}^{\min(k,l)} \dfrac{(n-k+i)!(k+l-i)!}{n!i!(l-i)!}\sigma_{k-i}(x)\sigma_{p-k-l+i}(\lambda) X^l \\
&= \sum_{k=0}^p \sum_{l=0}^p \sum_{i=0}^l \dfrac{(n-k+i)!(k+l-i)!}{n!i!(l-i)!}\sigma_{k-i}(x)\sigma_{p-k-l+i}(\lambda) X^l.
\end{align*}
Setting $m = p-k-l+i$, we get
\begin{align*}
s_{n,p}(x+X\cdot\Ib, \lambda+X\cdot\Ib)
&= \sum_{l=0}^p \sum_{i=0}^l \sum_{m=-l+i}^{p-l+i} \dfrac{(n+m-p+l)!(p-m)!}{n!i!(l-i)!}\sigma_{p-l-m}(x)\sigma_{m}(\lambda) X^l \\
&= \sum_{l=0}^p \sum_{i=0}^l \sum_{m=0}^{p-l} \dfrac{(n+m-p+l)!(p-m)!}{n!i!(l-i)!}\sigma_{p-l-m}(x)\sigma_{m}(\lambda) X^l \\
&= \sum_{l=0}^p \sum_{m=0}^{p-l} \left(\sum_{i=0}^l \dfrac{1}{i!(l-i)!} \right)\dfrac{(n+m-p+l)!(p-m)!}{n!}\sigma_{p-l-m}(x)\sigma_{m}(\lambda) X^l \\
&= \sum_{l=0}^p \sum_{m=0}^{p-l} \left(\dfrac{2^l}{l!}\right) \dfrac{(n+m-p+l)!(p-m)!}{n!}\sigma_{p-l-m}(x)\sigma_{m}(\lambda) X^l \\
&= \sum_{l=0}^p \sum_{m=0}^{p-l} \dfrac{(n+m-p+l)!(p-m)!}{n!l!}\sigma_{p-l-m}(x)\sigma_{m}(\lambda) (2X)^l.
\end{align*}
We apply the corollary \ref{sigmaxsigmamu} and find that there exists $\mu \in \Rb^p$ such that for all $0 \leq i \leq p$,
\[\sigma_i(x) = \dfrac{\binom{n}{p}}{\binom{n-i}{p-i}}\sigma_i(\mu).\]
Hence
\begin{align*}
s_{n,p}(x+X\cdot\Ib, \lambda+X\cdot\Ib)
&= \sum_{l=0}^p \sum_{m=0}^{p-l} \dfrac{(n+m-p+l)!(p-m)!}{n!l!} \dfrac{\binom{n}{p}}{\binom{n-(p-l-m)}{p-(p-l-m)}}\sigma_{p-l-m}(\mu)\sigma_{m}(\lambda) (2X)^l \\
&= \sum_{l=0}^p \sum_{m=0}^{p-l} \dfrac{(p-m)!(m+l)!}{l!p!} \sigma_{p-l-m}(\mu)\sigma_{m}(\lambda) (2X)^l \\
&= s_{p,p}(\mu+X\cdot\Ib,\lambda+X\cdot\Ib).
\end{align*}

If $s_{p,p}$ is a hyperbolic polynomial, then $s_{p,p}(\mu+X\cdot\Ib,\lambda+X\cdot\Ib) = \pi_p(\mu,\lambda) = s_{n,p}(x+X\cdot\Ib, \lambda+X\cdot\Ib)$ is real-rooted, and $s_{n,p}$ is hyperbolic as well.
\end{proof}

\begin{rem}
Citing corollary \ref{sigmaxsigmamu}, we could have replaced $x \in \Rb^n$ by $\mu \in \Rb^p$ since the definition of $s_{n,p}$: given $x \in \Rb^n$, there is some $\mu \in \Rb^p$ such that for all $0 \leq k \leq p$, $\sigma_k(x) = \frac{\binom{n}{p}}{\binom{n-k}{p-k}}\sigma_k(\mu)$. Then
\[s_{n,p}(x,\lambda) = \sum_{k=0}^p \dfrac{\sigma_k(x)\sigma_{p-k}(\lambda)}{\binom{n}{k}} = \sum_{k=0}^p \dfrac{\sigma_{k}(\mu)\sigma_{p-k}(\lambda)}{\binom{p}{k}}.\]
However that would not have given the equivalence between the hyperbolicity of $s_{n,p}$ and $s_{p,p}$.
\end{rem}

\subsection{A new conjecture}

We found an other equivalent formulation of the conjecture \ref{mainconj4}.

\begin{thm} \label{thmPQscinde}
Conjecture \ref{mainconj4} is equivalent to the following assertion.

For all $P, Q \in \Rb[X]$ with $\deg P = \deg Q = p$, if $P$ and $Q$ are real-rooted, then
\[\sum_{k=0}^p P^{(k)} Q^{(p-k)}\]
is real-rooted as well.
\end{thm}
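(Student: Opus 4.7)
The plan is to reduce the claim to Theorem \ref{pip}, which already tells us that Conjecture \ref{mainconj4} is equivalent to the real-rootedness of $\pi_p(\mu,\lambda) = s_{p,p}(\mu+X\cdot\Ib,\lambda+X\cdot\Ib)$ for all $(\mu,\lambda) \in \Rb^p \times \Rb^p$. It therefore suffices to identify $\pi_p(\mu,\lambda)$ with $\sum_{k=0}^p P^{(k)} Q^{(p-k)}$ (up to a nonzero scalar) via the bijection $\mu \leftrightarrow P$, $\lambda \leftrightarrow Q$.

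The key computation relies on formula \eqref{sigmakXder}. Setting $P(X) := \sigma_p(\mu+X\cdot\Ib) = \prod_{i=1}^p(X+\mu_i)$ and $Q(X) := \sigma_p(\lambda+X\cdot\Ib) = \prod_{i=1}^p(X+\lambda_i)$, that formula reads
\[\sigma_k(\mu+X\cdot\Ib) = \frac{P^{(p-k)}(X)}{(p-k)!}, \qquad \sigma_{p-k}(\lambda+X\cdot\Ib) = \frac{Q^{(k)}(X)}{k!}.\]
Plugging these into the definition
\[\pi_p(\mu,\lambda) = \sum_{k=0}^p \frac{\sigma_k(\mu+X\cdot\Ib)\,\sigma_{p-k}(\lambda+X\cdot\Ib)}{\binom{p}{k}}\]
and simplifying the factorials, every binomial cancels and one obtains
\[\pi_p(\mu,\lambda) = \frac{1}{p!}\sum_{k=0}^p P^{(p-k)}(X)\,Q^{(k)}(X) = \frac{1}{p!}\sum_{k=0}^p P^{(k)}(X)\,Q^{(p-k)}(X),\]
after reindexing $k \leftrightarrow p-k$.

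It remains to observe that the map $\mu \in \Rb^p \longmapsto P = \prod_i(X+\mu_i)$ is a bijection (up to permutation of the $\mu_i$) onto the set of monic real-rooted polynomials of degree $p$, and similarly for $Q$. Since multiplying $P$ or $Q$ by a nonzero scalar multiplies the sum $\sum_k P^{(k)}Q^{(p-k)}$ by the same scalar without altering real-rootedness, the monic restriction is harmless. Combining this with Theorem \ref{pip} yields the announced equivalence in both directions.

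There is no real obstacle here: the entire argument is an explicit identification of $\pi_p$ as a constant multiple of $\sum P^{(k)} Q^{(p-k)}$, and the only nontrivial ingredient is formula \eqref{sigmakXder}, which has already been established. The substance of the equivalence lies not in the present reduction but in the algebraic statement on the right-hand side, which remains the genuinely hard conjecture.
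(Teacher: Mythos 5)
Your proof is correct and takes essentially the same route as the paper: invoke Theorem \ref{pip}, use formula \eqref{sigmakXder} to rewrite $\pi_p(\mu,\lambda)$ as $\frac{1}{p!}\sum_{k=0}^p P^{(k)}Q^{(p-k)}$ with $P=\prod(X+\mu_i)$, $Q=\prod(X+\lambda_i)$, and conclude by surjectivity of $\mu\mapsto\prod(X+\mu_i)$ onto monic real-rooted degree-$p$ polynomials. Your explicit remark that rescaling $P,Q$ by nonzero constants does not affect real-rootedness of the sum is a small completeness improvement on the paper's terser closing sentence.
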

\begin{proof}
We know by theorem \ref{pip} that conjecture \ref{mainconj4} is equivalent to the real-rootedness of $\pi_p(\mu,\lambda)$ for all $\mu, \lambda \in \Rb^p$. Yet, by \eqref{sigmakXder},
\begin{align*}
\pi_p(\mu,\lambda) &= s_{p,p}(\mu + X\cdot\Ib, \lambda + X\cdot\Ib) \\
&= \sum_{k=0}^p \dfrac{\sigma_k(\mu + X\cdot\Ib) \sigma_{p-k}(\lambda + X\cdot\Ib)}{\binom{p}{k}} \\
&= \sum_{k=0}^p \dfrac{\sigma_p(\mu + X\cdot\Ib)^{(p-k)} \sigma_p(\lambda + X\cdot\Ib)^{(k)}}{(p-k)!\binom{p}{k}k!} \\
&= \dfrac{1}{p!} \sum_{k=0}^p \left[(X+\mu_1)\ldots(X+\mu_p)\right]^{(p-k)} \left[(X+\lambda_1)\ldots(X+\lambda_p)\right]^{(k)},
\end{align*}
and $(X+\mu_1)\ldots(X+\mu_p)$ and $(X+\lambda_1)\ldots(X+\lambda_p)$ are the most general form of real-rooted polynomials with degree $p$.
\end{proof}

This assertion looks like Rolle's lemma, and might have a generalisation \textit{\`a la} Gauss-Lucas, such as
\begin{conj} \label{conjgausslucas}
For all $P, Q \in \Cb[X]$ with $\deg P = \deg Q = p$, the roots of $\sum_{k=0}^p P^{(k)} Q^{(p-k)}$ lie in the convex hull of those of $P$ and $Q$.
\end{conj}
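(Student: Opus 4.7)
The plan is to reduce the conjecture, via an identity already implicit in the proof of Theorem~\ref{thmPQscinde}, to a multivariate non-vanishing statement about $s_{p,p}$, and then apply Walsh's coincidence theorem twice in succession. The strategy is in the Gauss-Lucas spirit: we treat a root $z_0$ of $R := \sum_{k=0}^p P^{(k)} Q^{(p-k)}$ that lies outside the convex hull and derive a contradiction.

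The first step is to establish, for $P = \prod_i (X-a_i)$ and $Q = \prod_j(X-b_j)$ with $a_i, b_j \in \Cb$, the identity
\[\sum_{k=0}^p P^{(k)}(X) Q^{(p-k)}(X) = p! \cdot s_{p,p}(X-a,\, X-b),\]
where $X-a := (X-a_1, \ldots, X-a_p) \in \Cb^p$ and similarly for $X-b$. This follows by matching the Taylor expansion $P(X+T) = \sum_k P^{(k)}(X) T^k/k!$ with the elementary symmetric expansion $\prod_i(T + (X-a_i)) = \sum_k \sigma_{p-k}(X-a)\, T^k$, which gives $P^{(k)}(X) = k!\,\sigma_{p-k}(X-a)$; the claim is then merely a reordering of the defining sum for $s_{p,p}$.

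Now suppose that $z_0$ is a root of $R$ lying outside the closed convex hull of $\{a_i\} \cup \{b_j\}$. By strict separation followed by an affine change of variable, we may assume that $\mathrm{Re}(a_i) \leq 0$ and $\mathrm{Re}(b_j) \leq 0$ for all $i, j$, while $\mathrm{Re}(z_0) > 0$. Setting $x_i := z_0 - a_i$ and $y_j := z_0 - b_j$, every $x_i$ and every $y_j$ lies in the open right half-plane $\Pi := \{z \in \Cb : \mathrm{Re}(z) > 0\}$, which is a circular region. The polynomial $s_{p,p}(x,y) = \sum_{k} \binom{p}{k}^{-1} \sigma_k(x) \sigma_{p-k}(y)$ is multi-affine and symmetric separately in $x$ and in $y$. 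Holding $y$ fixed, Walsh's coincidence theorem applied to the symmetric multi-affine polynomial $x \mapsto s_{p,p}(x,y)$ produces $\xi \in \Pi$ with $s_{p,p}(x,y) = s_{p,p}(\xi \cdot \Ib, y)$. Applying it a second time to $y \mapsto s_{p,p}(\xi \cdot \Ib, y)$, we obtain $\eta \in \Pi$ with $s_{p,p}(\xi \cdot \Ib, y) = s_{p,p}(\xi \cdot \Ib, \eta \cdot \Ib)$. Using $\sigma_k(\xi \cdot \Ib) = \binom{p}{k} \xi^k$, a direct computation yields
\[s_{p,p}(\xi \cdot \Ib,\, \eta \cdot \Ib) = \sum_{k=0}^p \binom{p}{k} \xi^k \eta^{p-k} = (\xi + \eta)^p,\]
which is nonzero since $\mathrm{Re}(\xi + \eta) > 0$. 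This contradicts $R(z_0) = p! \cdot s_{p,p}(x,y) = 0$.

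The main conceptual input is the identity $R = p! \cdot s_{p,p}(X-a, X-b)$; once it is in hand, the remainder is a Gauss-Lucas-style convex separation argument combined with Walsh's theorem. The only technical point that requires care is the validity of the double application of Walsh to a polynomial that is only separately symmetric in two blocks of variables, but this is standard: at each step, one block is frozen and the polynomial restricted to the other block is symmetric and multi-affine, so Walsh applies. If successful, this argument proves Conjecture~\ref{mainconj4} and strengthens Theorem~\ref{thmPQscinde} by providing localization of the roots in the convex hull of $\{a_i\} \cup \{b_j\}$.
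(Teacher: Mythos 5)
The statement you are proving is Conjecture~\ref{conjgausslucas}, which the paper explicitly leaves \emph{open} -- there is no proof in the paper to compare against, so the relevant question is only whether your argument stands on its own. Having checked it carefully, I believe it does, and that the result is new relative to the paper. Your key identity
\[
\sum_{k=0}^p P^{(k)}(X)\,Q^{(p-k)}(X) \;=\; p!\,s_{p,p}(X\cdot\Ib-a,\;X\cdot\Ib-b),
\]
for $P=\prod_i(X-a_i)$, $Q=\prod_j(X-b_j)$, matches the computation inside the paper's proof of Theorem~\ref{pip}/\ref{thmPQscinde} after translating $\lambda_i=-a_i$, $\mu_j=-b_j$ (and using that $s_{p,p}$ is invariant under swapping its two blocks). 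The separation step is sound: strict separation of $z_0$ from the compact convex hull, followed by the Möbius map $z\mapsto\bar\nu z-c$, carries $\sum P^{(k)}Q^{(p-k)}$ to $\nu^p$ times itself in the new coordinates, so the root set transforms covariantly. For the Grace--Walsh--Szeg\H o step, $s_{p,p}(\cdot,y)$ is symmetric and multi-affine in $x$ with the coefficient of $x_1\cdots x_p$ equal to $\sigma_0(y)=1$, and analogously after freezing $x=\xi\cdot\Ib$; moreover a half-plane is a convex circular region, so both hypotheses under which GWS is usually stated hold simultaneously. The blockwise (rather than globally symmetric) application of GWS is standard. One small technical point worth making explicit in a write-up: rather than the open half-plane, take $\delta=\min_{i,j}\{\Re(x_i),\Re(y_j)\}>0$ and apply GWS in the \emph{closed} circular region $\{\Re z\ge\delta\}$; this yields $\Re(\xi+\eta)\ge2\delta>0$ without relying on an open-region formulation of the theorem, and then $s_{p,p}(\xi\cdot\Ib,\eta\cdot\Ib)=(\xi+\eta)^p\ne0$ gives the contradiction.

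If this is correct -- and I do not see a gap -- then it settles not only Conjecture~\ref{conjgausslucas} but, via the chain recorded in the paper's conclusion, also Conjectures~\ref{mainconj4} and~\ref{mainconj2} and the substantive implication inside Conjecture~\ref{mainconj1}, so this deserves a careful independent verification and an explicit reference to a precise statement of the coincidence theorem (e.g.\ Rahman--Schmeisser, Theorem~3.4.1b, or a source phrasing it for closed circular regions). It is also worth checking whether the operation $P,Q\mapsto\sum_k P^{(k)}Q^{(p-k)}$ and its root-localization property appear in the literature on stable polynomials and the Grace--Walsh--Szeg\H o circle of ideas; a statement this clean is plausibly classical, and if so the paper should cite it rather than leave Conjecture~\ref{conjgausslucas} open.
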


\begin{prop} \label{conj4p2}
Conjecture \ref{mainconj4} is true for $p=2$.
\end{prop}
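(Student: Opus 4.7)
The plan is to apply Theorem \ref{pip}, which reduces Conjecture \ref{mainconj4} at $p=2$ to the statement that the one-variable polynomial $\pi_2(\mu,\lambda)\in\Rb[X]$ is real-rooted for every $\mu,\lambda\in\Rb^2$. Since $\pi_2(\mu,\lambda)$ has degree $2$, real-rootedness is equivalent to non-negativity of its discriminant, so the whole question collapses to a single inequality in four real variables.

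First, I would write down $\pi_2(\mu,\lambda)$ explicitly using the formula of Theorem \ref{pip}: it is a quadratic in $X$ of the form
\[
\pi_2(\mu,\lambda) \;=\; 4X^2 \;+\; 2\bigl(\sigma_1(\mu)+\sigma_1(\lambda)\bigr)X \;+\; \Bigl(\sigma_2(\mu)+\sigma_2(\lambda)+\tfrac12\sigma_1(\mu)\sigma_1(\lambda)\Bigr).
\]
Next, I would compute its discriminant $\Delta$. The key observation is that when one expands $4(\sigma_1(\mu)+\sigma_1(\lambda))^2$ and subtracts $16$ times the constant term, the mixed product $8\sigma_1(\mu)\sigma_1(\lambda)$ cancels cleanly, and what remains separates:
\[
\Delta \;=\; 4\bigl(\sigma_1(\mu)^2-4\sigma_2(\mu)\bigr) + 4\bigl(\sigma_1(\lambda)^2-4\sigma_2(\lambda)\bigr) \;=\; 4(\mu_1-\mu_2)^2 + 4(\lambda_1-\lambda_2)^2 \;\geq\; 0.
\]
Thus $\pi_2(\mu,\lambda)$ is real-rooted for all $\mu,\lambda\in\Rb^2$, which by Theorem \ref{pip} establishes that $s_{n,2}$ is $\Ib$-hyperbolic.

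There is no real obstacle here: the argument is a direct computation driven by the structural fact that the coupling term $\sigma_1(\mu)\sigma_1(\lambda)$ is eliminated by the specific coefficients coming from $s_{2,2}$, leaving the sum of the two \emph{individual} discriminants of the quadratics $P_{\mu}:=(X+\mu_1)(X+\mu_2)$ and $Q_\lambda:=(X+\lambda_1)(X+\lambda_2)$ — each non-negative because $P_\mu$ and $Q_\lambda$ are real-rooted. I would conclude by noting that, in view of proposition \ref{hypconc}, $s_{n,2}$ is then $\frac{1}{2}$-concave on its Gårding cone, and its restriction to $\Rb^n$ (namely $f_{\vec a}$ for an appropriate $\vec a$) inherits the $\frac{1}{2}$-concavity on $\Gamma_n$; this gives Conjecture \ref{mainconj2} at $p=2$ as well. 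The content that does \emph{not} generalise to larger $p$ is precisely the miraculous single cancellation of the cross term, which is why the higher-degree cases remain open.
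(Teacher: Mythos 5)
Your proof is correct and follows the paper's argument essentially verbatim: both reduce via Theorem \ref{pip}/Theorem \ref{thmPQscinde} to checking that a single quadratic is real-rooted, and both observe that its discriminant splits as $(\lambda_1-\lambda_2)^2 + (\mu_1-\mu_2)^2 \geq 0$ after the cross term $\sigma_1(\mu)\sigma_1(\lambda)$ cancels. Your presentation via $\pi_2$ and the paper's via $PQ''+P'Q'+P''Q$ differ only by the overall factor $\tfrac{1}{p!}$.
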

\begin{proof}
Let $\mu_1, \mu_2, \lambda_1, \lambda_2 \in \Rb$, $P = (X+\lambda_1)(X+\lambda_2)$, $Q = (X+\mu_1)(X+\mu_2)$. Then
\begin{align*}
P' &= 2X + \lambda_1 + \lambda_2, \\
Q' &= 2X + \mu_1 + \mu_2, \\
P'' = Q'' &= 2, \\
PQ'' + P'Q' + P''Q &= 8X^2 + 4(\lambda_1 + \lambda_2 + \mu_1 + \mu_2)X + 2\lambda_1\lambda_2 + 2\mu_1\mu_2 + (\lambda_1+\lambda_2)(\mu_1+\mu_2).
\end{align*}
The discriminant of $PQ'' + P'Q' + P''Q$ is
\[\Delta_{PQ'' + P'Q' + P''Q} = 16\left[(\lambda_1 - \lambda_2)^2 + (\mu_1 - \mu_2)^2\right] \geq 0,\]
so $PQ'' + P'Q' + P''Q$ is real-rooted. Moreover, its roots are
\[x_{\pm} = -\dfrac{\lambda_1 + \lambda_2 + \mu_1 + \mu_2}{4} \pm \dfrac{\sqrt{(\lambda_1 - \lambda_2)^2 + (\mu_1 - \mu_2)^2}}{4}.\]
\end{proof}

\begin{rem}
This is an other way to show conjecture \ref{mainconj2} for $p=2$.
\end{rem}

Roughly speaking, the corollary \ref{corlogconc} means that ``the more'' a polynomial is log-concave, the more it is likely to be real-rooted. The main example of a log-concave sequence is the sequence of the binomial coefficients: $\displaystyle{\binom{n}{k}}$ is log-concave, and
\[\binom{n}{0} + \binom{n}{1}X + \ldots + \binom{n}{n-1}X^{n-1} + \binom{n}{n}X^n = (1+X)^n\]
is real-rooted. Moreover, multiplying by the binomial sequence preserves the real-rootedness: it makes a polynomial ``more'' log-concave. This is a classical result of the theory of real-rooted polynomials, see for instance \cite{Fisk08} in chapter 7.
\begin{thm}[\cite{Fisk08}]
Let $P = a_0 + a_1 X + \ldots + a_p X^p \in \Rb_+[X]$, $n \in \Nb^*$. If $P$ is real-rooted, then
\[\binom{n}{0}a_0 + \binom{n}{p}a_1 X + \ldots + \binom{n}{p}a_p X^p \qquad \text{is real-rooted as well}.\]
\end{thm}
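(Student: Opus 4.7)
The plan is to realise the transformed polynomial
\[
Q(X) := \sum_{k=0}^p \binom{n}{k} a_k X^k
\]
as an evaluation of $\sigma_p$ on $\Rb^{n+p}$ along a line whose direction lies in the G\aa rding cone, and then to invoke Theorem \ref{sigmakhyp} together with Proposition \ref{hypconc}.

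First I would assume $a_p>0$ (otherwise the statement reduces immediately to smaller degree) and factor $P=a_p\prod_{j=1}^p(X+\mu_j)$ with $\mu_j\geq 0$, so that $a_k=a_p\,\sigma_{p-k}(\mu)$ where $\mu=(\mu_1,\ldots,\mu_p)$. Applying the concatenation identity \eqref{sumsigma} to the tuple $(\mu_1,\ldots,\mu_p,X,\ldots,X)\in\Rb^{p+n}$, with $n$ copies of $X$, and using $\sigma_i(X,\ldots,X)=\binom{n}{i}X^i$, one obtains the key identification
\[
Q(X)=a_p\sum_{i=0}^p\binom{n}{i}\sigma_{p-i}(\mu)X^i=a_p\,\sigma_p(\mu_1,\ldots,\mu_p,X,X,\ldots,X).
\]

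Now introduce $z=(\mu_1,\ldots,\mu_p,0,\ldots,0)$ and $v=(0,\ldots,0,1,\ldots,1)$ in $\Rb^{n+p}$ (the first $p$ entries of $v$ being $0$, the last $n$ being $1$), so that $z+Xv=(\mu_1,\ldots,\mu_p,X,\ldots,X)$ and therefore $Q(X)=a_p\,\sigma_p(z+Xv)$. By Theorem \ref{sigmakhyp}, $\sigma_p$ on $\Rb^{n+p}$ is $\Ib$-hyperbolic, and Proposition \ref{hypconc} then states that it is $w$-hyperbolic for every $w$ in its G\aa rding cone, which can be described as $\{y\in\Rb^{n+p}\ :\ \sigma_i(y)>0\text{ for }1\leq i\leq p\}$. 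Specialising to $w=v$ yields that $\sigma_p(z+Xv)$ is real-rooted in $X$, whence so is $Q$.

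The crux, and the only non-trivial step, is verifying that $v$ really does lie in the open G\aa rding cone in spite of having $p$ vanishing entries. Because zero entries contribute nothing to any elementary symmetric polynomial, $\sigma_i(v)=\binom{n}{i}$ for every $i$, which is strictly positive whenever $1\leq i\leq p\leq n$; this is exactly where the natural assumption $n\geq p$ is used. When $n<p$ the polynomial $Q$ has degree at most $n$ (since $\binom{n}{k}=0$ for $k>n$), and the geometric argument above collapses; that residual regime would require either a direct truncation argument or the full P\'olya--Schur multiplier sequence characterisation.
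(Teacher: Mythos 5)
The paper provides no proof of this result---it is cited directly from \cite{Fisk08} as a classical fact---so there is nothing of the paper's own to compare against; your argument stands on its own. Your approach is sound and, for $n \geq p$, complete. The identification $Q(X) = a_p\,\sigma_p(z + Xv)$ with $z = (\mu_1,\ldots,\mu_p,0,\ldots,0)$ and $v = (0,\ldots,0,1,\ldots,1) \in \Rb^{n+p}$ is exactly the content of identity \eqref{sumsigma}; $\sigma_p$ on $\Rb^{n+p}$ is $\Ib$-hyperbolic by Theorem~\ref{sigmakhyp}; $\sigma_i(v) = \binom{n}{i} > 0$ for $1 \leq i \leq p \leq n$ places $v$ in the G\aa rding cone of $\sigma_p$ in $\Rb^{n+p}$; and Proposition~\ref{hypconc} then yields $v$-hyperbolicity and hence real-rootedness of $Q$. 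This is a clean, self-contained derivation using precisely the section~\ref{sectionhomog} machinery, and it nicely parallels Theorem~\ref{Pascinde}.

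The gap you flag for $n < p$ can be closed inside your own framework by a short limiting argument, without invoking the full P\'olya--Schur characterisation. When $n < p$ one has $\sigma_p(v) = \binom{n}{p} = 0$, so $v$ sits on the boundary rather than in the interior of the cone; but the perturbed directions $v_\varepsilon := (\varepsilon, \ldots, \varepsilon, 1, \ldots, 1)$ have all coordinates strictly positive, hence lie in $\left(\Rb_+^*\right)^{n+p}$, which is contained in the G\aa rding cone, and $v_\varepsilon \to v$ as $\varepsilon \to 0^+$. Thus $\sigma_p(z + X v_\varepsilon)$ is real-rooted for every $\varepsilon > 0$, and its coefficients converge to those of $\sigma_p(z + Xv) = Q(X)/a_p$. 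Since the set of real-rooted polynomials of degree at most $p$ is closed under coefficient-wise limits (a non-real root of the limit would, by Hurwitz's theorem, force nearby non-real roots in the approximants), $Q$ is real-rooted. The drop in degree from $p$ to at most $n$ is harmless: the excess roots of the approximants simply escape to infinity. With this supplement your proof covers all $n \in \Nb^*$.
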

Here we do the contrary: we divide by the binomial sequence, hence we make the polynomial ``less'' log-concave, and less likely to be real-rooted. We have already seen that
\[\sum_{k=0}^p \sigma_k(\mu) \sigma_{p-k}(\lambda) = \sigma_p(\mu,\lambda)\]
is hyperbolic, and therefore $\displaystyle{\sum_{k=0}^p \sigma_k(\mu + X \cdot \Ib) \sigma_{p-k}(\lambda + X \cdot \Ib)}$ is real-rooted. In terms of polynomials $P$ and $Q$ from theorem \ref{thmPQscinde}, it writes simply
\[\sum_{k=0}^p \binom{p}{k}Q^{(p-k)}P^{(k)} = (QP)^{(p)},\]
which is real-rooted by $p$ applications of Rolle's lemma. If our conjecture is true and
\[\sum_{k=0}^p \dfrac{\sigma_k(\mu + X \cdot \Ib) \sigma_{p-k}(\lambda + X \cdot\Ib)}{\binom{p}{k}} = \sum_{k=0}^p Q^{(p-k)}P^{(k)}\]
is real-rooted, this could imply that the semi-symmetric polynomials are real-rooted although not much log-concave. The proposition \ref{conj4p2} even suggests that they could be optimal, in some sense, among the real-rooted polynomials.

\section{Summary and conclusion}
In this paper was raised the question of the $\frac{1}{p}$-concavity of the functions
\[f_{\vec{a}} : x \in \Gamma_n \longmapsto \sum_{k=0}^p a_k \sigma_k(x),\]
with $\vec{a} = (a_0, a_1, \ldots, a_p) \in (\Rb_+)^{p+1}$. In section \ref{sectionintro} we formulated the following conjectures:
\setcounter{conjbis}{0}
\begin{conjbis}
\begin{align*}
f_{\vec{a}} \text{ is $\frac{1}{p}$-concave on } \Gamma_n \quad &\Longleftrightarrow \quad f_{\vec{a}} \text{ is $\frac{1}{p}$-concave on } \Delta \\
&\Longleftrightarrow \quad \bar{f}_{\vec{a}} \text{ is $\frac{1}{p}$-concave on } \Rb_+^*.
\end{align*}
\end{conjbis}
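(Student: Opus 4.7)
The easy inclusion $\Kc_n^p \subseteq \Xc_n^p$ is immediate: the diagonal $\Delta = \Rb_+^* \cdot \Ib$ is an affine half-line inside $\Gamma_n$, and along the parametrization $t \mapsto t\,\Ib$ the function $f_{\vec{a}}$ restricts exactly to $\bar{f}_{\vec{a}}$. Since $\frac{1}{p}$-concavity is stable under affine restriction, $f_{\vec{a}}^{1/p}$ concave on $\Gamma_n$ forces $\bar{f}_{\vec{a}}^{1/p}$ concave on $\Rb_+^*$, and the concavity of $f_{\vec{a}}$ on $\Delta$ transfers to that of $\bar{f}_{\vec{a}}$ via the same map.

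For the hard inclusion $\Xc_n^p \subseteq \Kc_n^p$, the plan is to chain three implications, all of which are made available by the machinery of Sections \ref{sectionpolynomials} and \ref{sectionhomog}:
\[
\bar{f}_{\vec{a}} \text{ is } \tfrac{1}{p}\text{-concave} \;\overset{(1)}{\Longrightarrow}\; \bar{f}_{\vec{a}} \text{ is real-rooted} \;\overset{(2)}{\Longrightarrow}\; s_{n,p} \text{ is } \Ib\text{-hyperbolic} \;\overset{(3)}{\Longrightarrow}\; f_{\vec{a}} \text{ is } \tfrac{1}{p}\text{-concave on } \Gamma_n.
\]
Step (1) is exactly Conjecture \ref{conj3}, settled only for $\deg P \leq 3$; I would attempt an induction on $p$ exploiting the $P \leftrightarrow \tilde{P}$ symmetry from Proposition \ref{propmanu}, coupled with Kurtz-type strict bounds to bootstrap beyond Newton's inequalities. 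Step (2) is Conjecture \ref{mainconj4}: once $\bar{f}_{\vec{a}}$ is real-rooted, the lemma preceding the definition of the semi-symmetric polynomials rewrites $f_{\vec{a}}(x) = \binom{n}{p} a_p \cdot s_{n,p}(x, \lambda)$ where $\lambda \in \Rb_+^p$ collects the opposites of the roots of $\bar{f}_{\vec{a}}/(\binom{n}{p}a_p)$. Step (3) then follows from Proposition \ref{hypconc}, by restricting the concavity of $s_{n,p}^{1/p}$ on its G\aa rding cone to the affine slice $\{(x, \lambda) : x \in \Gamma_n\}$.

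The main obstacle is step (2), encapsulated by the Rolle-like statement of Theorem \ref{thmPQscinde}: for any real-rooted $P, Q \in \Rb[X]$ of common degree $p$, the polynomial $\sum_{k=0}^p P^{(k)} Q^{(p-k)}$ is real-rooted. A direct discriminant computation as in Proposition \ref{conj4p2} works cleanly at $p=2$, yielding the sum of squares $16\bigl[(\lambda_1-\lambda_2)^2 + (\mu_1-\mu_2)^2\bigr]$, but the algebraic complexity of the discriminant outruns brute force for general $p$. A more structural approach would be to view $P \mapsto \sum_k P^{(k)} Q^{(p-k)}$ as a linear operator on the space of polynomials of degree $\leq p$ and show that it preserves the cone of real-rooted polynomials, for instance via a Hermite-Poulain type multiplier factorization or via a continuity-and-perturbation argument on the roots in the spirit of Gauss-Lucas; such an argument would also settle the companion Conjecture \ref{conjgausslucas}.
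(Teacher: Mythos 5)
The statement you are addressing is stated as a \emph{conjecture} in the paper, not a theorem: the author explicitly writes that no general proof is available, and the only case settled in full is $p=2$, via a direct computation of $\det H(f_{\vec{a}}^{1/2})$ in Theorem~\ref{thmp2} (showing this determinant is constant, equal in sign to $na_1^2-2(n-1)a_0a_2$, and then using connectedness of $\Gamma_n$ together with a Cauchy--Schwarz sign check at $t\cdot\Ib$ for large $t$). Your easy inclusion $\Kc_n^p\subseteq\Xc_n^p$ via affine restriction to $\Delta$ is fine and matches the paper's observation.

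For the hard direction your chain has a genuine gap at step~(1), and it is not merely ``unproven'': it is refuted by the paper itself. Conjecture~\ref{conj3} is the implication $(P\ref{P2})\Rightarrow(P\ref{P1})$, where $(P\ref{P2})$ is the inequality $PP''+\left(\frac{1}{n}-1\right)P'^2\le 0$ \emph{on all of $\Rb$}. What you actually have as hypothesis in Conjecture~\ref{mainconj1} is $(P\ref{P3})$, namely $\frac{1}{p}$-concavity of $\bar f_{\vec a}$ \emph{on $\Rb_+^*$}. These are different conditions, and the remark following Theorem~\ref{thmdeg123} exhibits $P=X^3+X^2+\frac13 X$, which satisfies $(P\ref{P3})$ but is not real-rooted, so $(P\ref{P3})\not\Rightarrow(P\ref{P1})$ already at degree $3$. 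Consequently your step~(1) cannot be repaired: for $p\ge 3$ the set $\Xc_n^p$ is strictly larger than $\Xi_n^p$, and a proof that starts by reducing to real-rootedness of $\bar f_{\vec a}$ only reaches Conjecture~\ref{mainconj2}, never the stronger Conjecture~\ref{mainconj1}. The Schur-like machinery you invoke in steps~(2) and~(3) is indeed the route the paper uses in Section~\ref{sectionhomog} --- but it is deployed there to attack Conjecture~\ref{mainconj2} (via Conjecture~\ref{mainconj4} and Theorem~\ref{thmPQscinde}), not Conjecture~\ref{mainconj1}. The paper's treatment of the latter at $p=2$ bypasses real-rootedness entirely by computing the Hessian determinant and then invoking Theorem~\ref{thmdeg123}, where $(P\ref{P1})\Leftrightarrow(P\ref{P3})$ holds precisely because the degree is at most~$2$; that equivalence is what breaks down and what your outline silently relies on.
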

\begin{conjbis}
\[f_{\vec{a}} \text{ is $\frac{1}{p}$-concave on } \Gamma_n \quad \Longleftarrow \quad \bar{f}_{\vec{a}} \text{ is real-rooted}.\]
\end{conjbis}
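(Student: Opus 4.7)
The statement to establish is Conjecture~\ref{mainconj2}: if $\vec{a}=(a_0,\ldots,a_p)\in\Rb_+^{p+1}$ with $\bar{f}_{\vec{a}}$ real-rooted, then $f_{\vec{a}}$ is $\frac{1}{p}$-concave on $\Gamma_n$, for every $p\in\Nb^*$. The plan is to follow the reduction chain already built in the paper and then push through its last algebraic step. When $\bar{f}_{\vec{a}}$ is real-rooted, the first lemma of Section~\ref{sectionhomog} produces $\lambda\in\Rb_+^p$ such that $f_{\vec{a}}$ is, up to a positive scalar, the restriction of $s_{n,p}(\,\cdot\,,\lambda)$ to $\Gamma_n$; Proposition~\ref{hypconc} upgrades Conjecture~\ref{mainconj4} (hyperbolicity of $s_{n,p}$) into the desired $\frac{1}{p}$-concavity; and Theorems~\ref{pip}--\ref{thmPQscinde} further reduce Conjecture~\ref{mainconj4} to the purely algebraic assertion
\[(\star)\qquad \text{For every real-rooted } P,Q\in\Rb[X] \text{ of degree } p,\ \ R_{P,Q}:=\sum_{k=0}^{p}P^{(k)}Q^{(p-k)} \text{ is real-rooted.}\]
So the whole task consolidates into proving $(\star)$ for all $p$.

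My primary line of attack on $(\star)$ is via the Borcea--Br\"and\'en theory of stability-preserving linear operators. Fixing a real-rooted $P$ of degree $p$, the map $T_P:Q\mapsto R_{P,Q}$ is a finite-order linear differential operator on the space of polynomials of degree $\le p$, and the Borcea--Br\"and\'en criterion characterises $T_P$ as preserving real-rootedness in terms of the two-variable symbol
\[S_P(X,w)\;=\;T_P\bigl((X+w)^p\bigr)\;=\;\sum_{k=0}^{p}\frac{p!}{k!}\,P^{(k)}(X)\,(X+w)^{k}.\]
I would compute $S_P$ explicitly, re-express it as an expansion of $P$ around $X$ weighted by derivatives of $(X+w)^p$, and then test its bivariate stability via Hermite--Biehler interlacing applied fibre-wise in $w$ for each fixed real $X$. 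The interlacing of the consecutive derivatives $P^{(k)},P^{(k+1)}$ guaranteed by Rolle's lemma applied to the real-rooted $P$ is the natural input to feed into this interlacing test, and symmetry in $P\leftrightarrow Q$ gives a second fibration that I would use as a consistency check.

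A complementary approach I would pursue in parallel is induction on $p$. The base cases $p\le 2$ are settled (Proposition~\ref{conj4p2}). For the step, write $P=(X+\mu)\tilde P$ and $Q=(X+\lambda)\tilde Q$ with $\tilde P,\tilde Q$ real-rooted of degree $p-1$, expand $R_{P,Q}$ by Leibniz, and reorganise the result into a polynomial combination of the three smaller objects $R_{\tilde P,\tilde Q}$, $R_{\tilde P',\tilde Q}$ and $R_{\tilde P,\tilde Q'}$. All three are real-rooted by the inductive hypothesis, and if they can be shown to interlace pairwise (with a common interlacer such as $R_{\tilde P',\tilde Q'}$), then Hermite--Kakeya--Obreschkoff delivers real-rootedness of the combination, and hence of $R_{P,Q}$.

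The main obstacle in either route is precisely what is acknowledged as open at the end of Section~\ref{sectionhomog}. For $p=2$ a single discriminant calculation collapses $(\star)$ to the manifestly non-negative quantity $16\bigl[(\lambda_1-\lambda_2)^2+(\mu_1-\mu_2)^2\bigr]$; for $p\ge 3$ no such miracle is available, the Borcea--Br\"and\'en symbol $S_P(X,w)$ becomes a genuine bivariate polynomial whose stability is not manifest, and the interlacing route has to overcome the fact that $R_{\tilde P,\tilde Q}$, $R_{\tilde P',\tilde Q}$, $R_{\tilde P,\tilde Q'}$ do not automatically interlace. Identifying the correct interlacing pattern among these, or equivalently verifying bivariate half-plane stability of $S_P$, is the genuinely new algebraic input that the proof requires and which is currently absent from the literature.
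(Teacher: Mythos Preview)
Your proposal correctly traces the paper's own reduction chain from Conjecture~\ref{mainconj2} through Conjecture~\ref{mainconj4} down to the algebraic statement $(\star)$, and this part matches the paper exactly. But the paper does \emph{not} prove Conjecture~\ref{mainconj2} in general: it is established only for $p=2$ (Theorem~\ref{thmp2} via the Hessian determinant, and independently Proposition~\ref{conj4p2} via $(\star)$ for $p=2$), and is explicitly left open for $p\ge 3$. So there is no proof in the paper to compare your argument against; what you have submitted is a research outline, not a proof, and you acknowledge as much in your final paragraph.

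The genuine gap is that neither of your two strategies is executed. In the Borcea--Br\"and\'en route, checking upper-half-plane stability of the symbol $S_P(X,w)=\sum_{k=0}^p \frac{p!}{k!}P^{(k)}(X)(X+w)^k$ is not a simplification of $(\star)$ but a reformulation of equal difficulty: the fibrewise Hermite--Biehler test you propose amounts to proving interlacing statements about the family $R_{P,(X+w)^p}$ that are no easier than $(\star)$ itself. In the inductive route, the Leibniz expansion of $R_{P,Q}$ after peeling off one linear factor from each of $P,Q$ does not in fact reorganise into a combination of $R_{\tilde P,\tilde Q}$, $R_{\tilde P',\tilde Q}$, $R_{\tilde P,\tilde Q'}$ alone: the leading cross-term is $(X+\mu)(X+\lambda)\sum_{k=0}^{p}\tilde P^{(k)}\tilde Q^{(p-k)}$, whose inner sum has the wrong index range to be any $R_{\tilde P,\tilde Q}$ of degree $p-1$, and the remaining terms carry shifted indices and mixed signs that Hermite--Kakeya--Obreschkoff does not automatically handle. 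Until one of these obstacles is genuinely overcome, $(\star)$---and hence Conjecture~\ref{mainconj2} for $p\ge 3$---remains open, exactly as the paper states.
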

We showed with proposition \ref{concroots} that for all $p, n$,
\[\text{Conjecture } \ref{mainconj2} \quad \Longrightarrow \quad \text{Conjecture } \ref{mainconj1}.\]

Later in section \ref{sectionhomog} we introduced the homogeneous semi-symmetric polynomials
\[s_{n,p} : (x, \lambda) \in \Rb^n \times \Rb^p \longmapsto \sum_{k=0}^p \dfrac{\sigma_k(x) \sigma_{p-k}(\lambda)}{\binom{n}{k}},\]
and emitted the following:
\stepcounter{conjbis}
\begin{conjbis}
$s_{n,p}$ is a $\Ib$-hyperbolic polynomial.
\end{conjbis}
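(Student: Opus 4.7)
The plan is to use the reduction of Theorem \ref{pip}: it suffices to prove that $\pi_p(\mu,\lambda)$ is real-rooted for every $\mu,\lambda\in\Rb^p$. My starting point is an integral representation that exposes the role of the known hyperbolic polynomial $\sigma_p$. Combining the Beta identity $\dfrac{1}{\binom{p}{k}}=(p+1)\int_0^1 t^k(1-t)^{p-k}\,\d t$ with equation \eqref{sumsigma}, one obtains
\[s_{p,p}(\mu,\lambda) \;=\; (p+1)\int_0^1 \sigma_p(t\mu,(1-t)\lambda)\,\d t.\]
Substituting $(\mu,\lambda)\mapsto(\mu+X\Ib,\lambda+X\Ib)$, the integrand becomes $\sigma_p\bigl((t\mu,(1-t)\lambda)+X(t\Ib,(1-t)\Ib)\bigr)$. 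Since $\sigma_p$ is $\Ib$-hyperbolic on $\Rb^{2p}$ by Theorem \ref{sigmakhyp}, and the vector $(t\Ib,(1-t)\Ib)$ lies in its G\aa rding cone for every $t\in(0,1)$ (all its coordinates are strictly positive, hence every $\sigma_i$ evaluates positively), Proposition \ref{hypconc} implies that the integrand is a real-rooted polynomial in $X$ for each fixed $t\in(0,1)$. Thus $\pi_p(\mu,\lambda)$ is an integral average of a one-parameter family of real-rooted polynomials. Consistency checks confirm real-rootedness on the boundary of the parameter space: specializing to $P=(X+a)^p$ gives $F:=\sum_k P^{(k)}Q^{(p-k)} = p!\,Q(2X+a)$, and setting $\lambda=0$ recovers $P(2X)$, both real-rooted.

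The main obstacle, and what I expect to be the crux, is that an integral average of real-rooted polynomials need not be real-rooted, so additional structure must be invoked. The most natural tool is a \emph{common interlacer}: if for every pair $s,t\in(0,1)$ the sum of integrands at $s$ and $t$ is itself real-rooted, then by the classical Obreshkov--Hermite--Kakeya theorem any positive linear combination of the family---including the integral---is real-rooted. I would attempt this pairwise compatibility via a further G\aa rding-cone argument: the convex combination $\tfrac12\bigl((s\Ib,(1-s)\Ib)+(t\Ib,(1-t)\Ib)\bigr)$ again lies in $\Gamma_p$, so $\sigma_p$ remains hyperbolic in the averaged direction, and one would try to exhibit a concrete common interlacer built from $\sigma_{p-1}$ or from an appropriate directional derivative of $\sigma_p$.

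A complementary avenue, opened by Theorem \ref{thmPQscinde}, is to tackle the polynomial statement directly. Proving the stronger Conjecture \ref{conjgausslucas} would suffice, and the natural attempt is to adapt the logarithmic-derivative proof of classical Gauss--Lucas, by extracting from $F=\sum P^{(k)}Q^{(p-k)}=0$ a vanishing convex combination of the $z-r_i$ and $z-s_j$ (with $r_i,s_j$ the roots of $P$ and $Q$), forcing $z$ into their convex hull and, for real roots, onto $\Rb$. Alternatively, a continuity argument on the connected parameter space of pairs of real-rooted polynomials should work: the set of $(P,Q)$ where $F$ is real-rooted is closed and contains the boundary stratum $P=(X+a)^p$ above, so only openness must be verified, which reduces to controlling the discriminant of $F$ at configurations where $F$ acquires a double root. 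In every approach the common obstruction is that $(P,Q)\mapsto\sum P^{(k)}Q^{(p-k)}$ is not an obvious stability-preserver---its Borcea--Br\"and\'en symbol $\sum z_1^k z_2^{p-k}$ fails to be stable already for $p=2$---so any proof must really exploit the univariate diagonal restriction rather than invoke a generic stability theorem.
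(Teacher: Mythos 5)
The statement you were asked to prove is Conjecture \ref{mainconj4}, which the paper does \emph{not} prove for general $p$: it is left open, with only the $p=2$ case settled in Proposition \ref{conj4p2}. So there is no proof in the paper to measure your proposal against; the question is whether your sketch closes the conjecture, and it does not.

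Your integral representation is correct and, within the setting of this paper, a genuinely new observation: the Beta identity $\frac{1}{\binom{p}{k}}=(p+1)\int_0^1 t^k(1-t)^{p-k}\,\d t$ together with \eqref{sumsigma} does give
\[s_{p,p}(\mu,\lambda)=(p+1)\int_0^1\sigma_p\bigl(t\mu,(1-t)\lambda\bigr)\,\d t,\]
and after substituting $(\mu,\lambda)\mapsto(\mu+X\Ib,\lambda+X\Ib)$ the integrand is $\sigma_p$ restricted along the direction $v_t:=(t\Ib,(1-t)\Ib)$, which lies in the G\aa rding cone, so each integrand is real-rooted. You also correctly flag the crux: an integral average of real-rooted polynomials need not be real-rooted.

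The gap is in the step you propose to bridge this. You suggest verifying pairwise compatibility, so that the Hermite--Kakeya--Obreschkoff criterion would extend real-rootedness to the integral, by noting that the averaged direction $\frac{1}{2}(v_s+v_t)$ again lies in the cone. But hyperbolicity in direction $\frac{1}{2}(v_s+v_t)$ only gives real-rootedness of the single restriction $X\mapsto\sigma_p\bigl(x+X\cdot\frac{v_s+v_t}{2}\bigr)$; it says nothing about the \emph{sum} $\sigma_p(x+Xv_s)+\sigma_p(x+Xv_t)$ of two restrictions along distinct directions, which is the polynomial that the interlacing criterion actually requires to be real-rooted. These are different objects and do not coincide, so this route does not establish a common interlacer. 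The continuity and Gauss--Lucas avenues you list are, as you yourself present them, programmes rather than arguments, and your final paragraph correctly identifies why a generic stability theorem will not rescue them. In short, the proposal isolates the difficulty and offers an attractive reformulation, but it is not a proof and the one concrete bridging step proposed is invalid.
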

We showed in theorem \ref{pip} that this is equivalent to
\addtocounter{conjbis}{-1}
\begin{conjbis}
$s_{p,p}$ is a $\Ib$-hyperbolic polynomial.
\end{conjbis}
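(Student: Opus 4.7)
The plan is to reduce the equivalence to a direct computational identification of $\pi_p(\mu,\lambda)$ with the sum $\sum_{k=0}^p P^{(k)} Q^{(p-k)}$, using only the definition of $s_{p,p}$ and the derivative formula \eqref{sigmakXder}. I would start by invoking theorem \ref{pip}, which already reduces conjecture \ref{mainconj4} to the real-rootedness of $\pi_p(\mu,\lambda)$ for every $\mu, \lambda \in \Rb^p$. So the whole task collapses to matching $\pi_p(\mu,\lambda)$ with $\sum_{k=0}^p P^{(k)} Q^{(p-k)}$ (up to a nonzero scalar) where $P$ and $Q$ are the monic polynomials naturally associated with $\mu$ and $\lambda$.

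Concretely, I would set $P(X) := \prod_{i=1}^p (X + \mu_i) = \sigma_p(\mu + X \cdot \Ib)$ and $Q(X) := \prod_{i=1}^p (X + \lambda_i) = \sigma_p(\lambda + X \cdot \Ib)$. Applying \eqref{sigmakXder} (with the ambient dimension taken to be $p$) gives, for $0 \leq k \leq p$,
\[\sigma_k(\mu + X \cdot \Ib) = \frac{P^{(p-k)}}{(p-k)!}, \qquad \sigma_{p-k}(\lambda + X \cdot \Ib) = \frac{Q^{(k)}}{k!}.\]
Substituting these into
\[\pi_p(\mu,\lambda) = s_{p,p}(\mu + X \cdot \Ib, \lambda + X \cdot \Ib) = \sum_{k=0}^p \frac{\sigma_k(\mu + X \cdot \Ib)\,\sigma_{p-k}(\lambda + X \cdot \Ib)}{\binom{p}{k}},\]
and using $\binom{p}{k}^{-1} = \frac{k!(p-k)!}{p!}$ to cancel the factorials uniformly, one obtains
\[\pi_p(\mu,\lambda) = \frac{1}{p!} \sum_{k=0}^p P^{(p-k)} Q^{(k)} = \frac{1}{p!} \sum_{k=0}^p P^{(k)} Q^{(p-k)}\]
after the reindexing $k \mapsto p-k$.

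To close the equivalence, I would observe that every real-rooted polynomial of degree $p$ is of the form $c\prod_{i=1}^p (X + \mu_i)$ for some $\mu \in \Rb^p$ and some $c \in \Rb \setminus \{0\}$, and that rescaling by a nonzero constant preserves real-rootedness (and commutes with differentiation in each factor). Hence the real-rootedness of $\pi_p(\mu,\lambda)$ for every $\mu, \lambda \in \Rb^p$ is equivalent to the real-rootedness of $\sum_{k=0}^p P^{(k)} Q^{(p-k)}$ for every pair of real-rooted polynomials $P, Q$ of degree $p$; combined with theorem \ref{pip}, this is the claimed equivalence.

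There is no substantive obstacle: the proof is a translation between two equivalent formalisms and the only point requiring care is the clean cancellation $\binom{p}{k}^{-1} \cdot \frac{1}{(p-k)!} \cdot \frac{1}{k!} = \frac{1}{p!}$, which is precisely what makes the semi-symmetric definition of $s_{p,p}$ match the symmetric-looking operator $(P,Q) \mapsto \sum_k P^{(k)} Q^{(p-k)}$. The difficulty in attacking conjecture \ref{mainconj4} then gets transported to the (still open) real-rootedness statement for $\sum_{k=0}^p P^{(k)} Q^{(p-k)}$.
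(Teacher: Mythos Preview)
Your proposal is correct and follows essentially the same route as the paper's proof of theorem \ref{thmPQscinde}: invoke theorem \ref{pip} to reduce to the real-rootedness of $\pi_p(\mu,\lambda)$, then use \eqref{sigmakXder} to rewrite $\sigma_k(\mu+X\cdot\Ib)$ and $\sigma_{p-k}(\lambda+X\cdot\Ib)$ as derivatives of $P=\prod_i(X+\mu_i)$ and $Q=\prod_i(X+\lambda_i)$, so that the $\binom{p}{k}$ in the denominator combines with the factorials to yield the uniform $1/p!$ and hence $\pi_p(\mu,\lambda)=\tfrac{1}{p!}\sum_{k=0}^p P^{(k)}Q^{(p-k)}$. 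The only addition you make is the explicit remark that arbitrary real-rooted $P,Q$ of degree $p$ differ from the monic ones by nonzero scalars, which is harmless for real-rootedness; the paper leaves this implicit.
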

Or, equivalently, according to theorem \ref{thmPQscinde}, to
\addtocounter{conjbis}{-1}
\begin{conjbis}
For all $P, Q \in \Rb[X]$ with $\deg P = \deg Q = p$, if $P$ and $Q$ are real-rooted, then
\[\sum_{k=0}^p P^{(k)} Q^{(p-k)}\]
is real-rooted as well.
\end{conjbis}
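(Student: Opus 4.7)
The key insight for attacking this conjecture in full generality is that the bilinear operator $(P,Q) \mapsto R(X) := \sum_{k=0}^p P^{(k)}(X)\, Q^{(p-k)}(X)$ is, up to a dilation $X \mapsto 2X$ and an overall factor $p!$, the \emph{finite free additive convolution} $P \boxplus_p Q$ introduced by Marcus, Spielman and Srivastava in their work on interlacing families. Once this identification is made, the conjecture reduces to their theorem that $\boxplus_p$ preserves real-rootedness of degree-$p$ polynomials.

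Concretely, I would write $P = \sum_i a_i X^i$ and $Q = \sum_j b_j X^j$ and expand $P^{(k)} Q^{(p-k)}$ in the monomial basis. After summing over $k$ and collapsing the inner sum via the binomial identity $\sum_{l=0}^m \frac{1}{l!(m-l)!} = \frac{2^m}{m!}$, one obtains
\[[X^m]\, R \;=\; \frac{2^m}{m!} \sum_{\substack{i+j = p+m \\ 0 \leq i,\, j \leq p}} i!\, j!\, a_i\, b_j.\]
Comparing this with the standard closed form for $P \boxplus_p Q$ in the monomial basis,
\[[X^m]\, (P \boxplus_p Q) \;=\; \frac{1}{p!\, m!} \sum_{\substack{i+j = p+m \\ 0 \leq i,\, j \leq p}} i!\, j!\, a_i\, b_j,\]
one extracts the clean identity $R(X) \;=\; p!\, (P \boxplus_p Q)(2X)$. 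I would sanity-check this against the tractable cases $P = (X+\alpha)^p$, $Q = (X+\beta)^p$ (where both sides equal $p!\,(2X + \alpha + \beta)^p$) and against proposition \ref{conj4p2} for $p=2$; in particular the roots $x_\pm$ computed there agree with what one reads off from $P \boxplus_2 Q$.

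With the identity in hand, the conjecture follows at once from the Marcus--Spielman--Srivastava theorem: if $P$ and $Q$ are real-rooted of common degree $p$, then $P \boxplus_p Q$ is real-rooted, and since $X \mapsto 2X$ preserves real-rootedness, so is $R$.

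The main obstacle is concentrated in that last black-box step: the real-rootedness of $P \boxplus_p Q$ is a deep result, proved via the method of \emph{interlacing families} and \emph{mixed characteristic polynomials}, the same machinery used in the resolution of the Kadison--Singer problem. A self-contained proof within the framework of the present paper would require either reproducing that technology or finding a genuinely more elementary route (for instance, by exhibiting an explicit common interlacer for a well-chosen one-parameter family of perturbations, which already for $p=3$ is not obvious). The genuinely new ingredient of the proposed approach is the coefficient identity above, which bridges the elementary-symmetric-polynomial question of this paper with a problem already solved in the rather different context of finite free probability.
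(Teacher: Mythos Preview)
Your proposal is correct and goes well beyond what the paper achieves. In the paper this statement is left as an open conjecture; only the case $p=2$ is verified (Proposition~\ref{conj4p2}), by computing the discriminant of $PQ'' + P'Q' + P''Q$ directly. Your coefficient computation is right: with $P = \sum_i a_i X^i$, $Q = \sum_j b_j X^j$ one gets
\[
[X^m]\,R \;=\; \frac{2^m}{m!}\sum_{\substack{i+j=p+m\\0\le i,j\le p}} i!\,j!\,a_i b_j,
\]
which is exactly $p!\,2^m$ times the $X^m$-coefficient of the finite free additive convolution $P\boxplus_p Q$ of Marcus--Spielman--Srivastava, so $R(X)=p!\,(P\boxplus_p Q)(2X)$. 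Your sanity checks are consistent with the paper's $p=2$ formula for the roots $x_\pm$. The real-rootedness of $P\boxplus_p Q$ for real-rooted $P,Q$ of degree $p$ is precisely their theorem, and the dilation $X\mapsto 2X$ is harmless, so the conjecture follows.

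The two approaches are therefore not really comparable: the paper's contribution is to \emph{reformulate} the $\frac{1}{p}$-concavity question as this derivative identity via G\aa rding's hyperbolic polynomials, and to remark that it ``looks like Rolle's lemma''; it does not attempt a proof beyond $p=2$. Your contribution is the recognition that this reformulation is, up to the factor $p!$ and the rescaling, exactly the symmetric additive convolution---a connection the author evidently did not see (the MSS preprint predates this paper). What this buys is a complete proof of Conjecture~\ref{mainconj4}, hence of Conjecture~\ref{mainconj2}, at the price of importing a substantial external theorem. Your candid flagging of that black box is appropriate: the interlacing-families machinery behind the MSS result is not reproducible with the elementary tools of the present paper, but citing it is entirely legitimate and closes the question.
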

A natural generalisation of this result would be
\begin{conjbis}
For all $P, Q \in \Cb[X]$ with $\deg P = \deg Q = p$, the roots of $\sum_{k=0}^p P^{(k)} Q^{(p-k)}$ lie in the convex hull of those of $P$ and $Q$.
\end{conjbis}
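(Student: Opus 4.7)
The plan is to reduce the statement to Grace's classical apolarity theorem. The key observation is that, for any $\alpha \in \Cb$, the value $F(\alpha) := \sum_{k=0}^p P^{(k)}(\alpha) Q^{(p-k)}(\alpha)$ is, up to a nonzero constant, the Grace apolarity bracket of the two polynomials $\tilde P(Z) := P(\alpha + Z)$ and $\tilde Q(Z) := Q(\alpha - Z)$, seen as polynomials of degree $p$ in the auxiliary variable $Z$. Taylor expansion gives $\tilde P(Z) = \sum_k \frac{P^{(k)}(\alpha)}{k!} Z^k$ and $\tilde Q(Z) = \sum_k \frac{(-1)^k Q^{(k)}(\alpha)}{k!} Z^k$; rewriting these in the Grace form $\sum_k \binom{p}{k} a_k Z^k$ and $\sum_k \binom{p}{k} b_k Z^k$ and using the identity $\binom{p}{k}\,k!\,(p-k)! = p!$, the apolarity bracket $\sum_k (-1)^k \binom{p}{k} a_k b_{p-k}$ collapses to $\frac{(-1)^p}{p!} F(\alpha)$. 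Hence $\alpha$ is a root of $F$ if and only if $\tilde P$ and $\tilde Q$ are apolar. The sign reflection in the definition of $\tilde Q$ is exactly what makes this identification work.

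Next, I would argue by contradiction. Suppose $\alpha$ is a root of $F$ that lies strictly outside the closed convex hull $K$ of the combined zero set $\{r_1, \ldots, r_p, s_1, \ldots, s_p\}$. The supporting hyperplane theorem produces a closed half-plane $\bar H \subset \Cb$ containing $K$ with $\alpha \notin \bar H$. The translate $D := \bar H - \alpha$ is a closed circular region that does not contain $0$: it contains all the zeros $r_i - \alpha$ of $\tilde P$, while the opposite closed half-plane $-D$ (which is disjoint from $D$) contains all the zeros $\alpha - s_j$ of $\tilde Q$. In particular $D$ contains no zero of $\tilde Q$.

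Grace's theorem now furnishes the contradiction: whenever two polynomials of the same degree are apolar, every closed circular region (disk, complement of an open disk, or closed half-plane) containing all zeros of one must contain at least one zero of the other. Here $\tilde P$ and $\tilde Q$ are apolar of degree $p$, and $D$ is a closed circular region containing all zeros of $\tilde P$ but no zero of $\tilde Q$ -- impossible. Hence every root $\alpha$ of $F$ must lie in $K$. The main obstacle I anticipate is purely bookkeeping: one has to calibrate the Taylor reflection ($\alpha - Z$ for $Q$, not $\alpha + Z$) so that Grace's signed bracket matches the \emph{unsigned} sum $\sum_k P^{(k)} Q^{(p-k)}$ and, simultaneously, so that the zero sets of $\tilde P$ and $\tilde Q$ fall into opposite half-planes; once both alignments are in place, Grace's 1902 theorem is the only deep input needed. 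As a bonus, specialising to real zeros collapses $K$ onto $\Rb$ and settles Conjecture \ref{mainconj4}, hence Conjecture \ref{mainconj2}, in full generality.
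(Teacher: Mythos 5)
The paper states this as Conjecture~\ref{conjgausslucas} and explicitly leaves it open (``This is not proven yet''), so there is no paper proof to compare with. Your argument, however, appears to be correct and settles it via Grace's 1902 apolarity theorem. The identity you rely on,
\[
\sum_{k=0}^p(-1)^k\binom{p}{k}a_k\,b_{p-k}
\;=\;\frac{(-1)^p}{p!}\sum_{k=0}^pP^{(k)}(\alpha)\,Q^{(p-k)}(\alpha),
\]
with $a_k=\frac{(p-k)!}{p!}P^{(k)}(\alpha)$ and $b_k=(-1)^k\frac{(p-k)!}{p!}Q^{(k)}(\alpha)$ (the Grace-normalised Taylor coefficients of $P(\alpha+Z)$ and $Q(\alpha-Z)$), checks out because $\binom{p}{k}\,k!\,(p-k)!=p!$ is independent of $k$, so the Grace bracket collapses to the unsigned sum, and the deliberate $(-1)^k$ in the $b_k$'s cancels the alternating sign of the bracket, leaving only the global $(-1)^p$. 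The same reflection is what sends the zeros $\alpha-s_j$ of $Q(\alpha-Z)$ into $-D$ while the zeros $r_i-\alpha$ of $P(\alpha+Z)$ sit in $D$, and $D\cap(-D)=\varnothing$ whenever the closed half-plane $D$ misses the origin; both alignments hold for exactly the one sign choice, as you observe. Both shifted polynomials have degree exactly $p$, so Grace's theorem applies, and the separating-hyperplane step is standard since the hull $K$ is compact. One small remark worth adding: $F=\sum_kP^{(k)}Q^{(p-k)}$ always has degree exactly $p$, with leading coefficient $2^p\,p!\,ab$ for leading coefficients $a,b$ of $P,Q$, so $F\not\equiv0$ and the statement is never vacuous. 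If this holds up under scrutiny of the precise form of Grace's theorem you invoke (closed circular regions including half-planes, which is the standard formulation), it resolves Conjectures~\ref{conjgausslucas}, \ref{mainconj4}, \ref{mainconj2}, and \ref{mainconj1} in one stroke, leaving only Conjecture~\ref{conj3} open.
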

The theory of hyperbolic polynomials ensures that
\[\text{Conjecture } \ref{mainconj4} \quad \Longrightarrow \quad \text{Conjecture } \ref{mainconj2},\]
and it is direct to see that 
\[\text{Conjecture } \ref{conjgausslucas} \quad \Longrightarrow \quad \text{Conjecture } \ref{mainconj4}.\]

Thus, the nature of $\Kc_n^p$ is connected to algebraic results on real-rootedness of polynomials. The section \ref{sectionpolynomials} recalls some known results and two original ones on necessary or sufficient criteria of real-rootedness. This made us formulate an other conjecture, unrelated to the previous ones.
\setcounter{conjbis}{2}
\begin{conjbis}
Let $P \in \Rb_+[X], \deg P = n$. Then
\[P \text{ is real-rooted} \qquad \Longleftarrow \qquad PP'' + \left(\dfrac{1}{n}-1\right)P'^2 \leq 0 \text{ on } \Rb.\]
\end{conjbis}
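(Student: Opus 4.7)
The condition $(P\ref{P2})$ admits a root-theoretic reformulation. Writing $P(X) = a_n\prod_{i=1}^n(X-z_i)$ with $z_i \in \Cb$ the roots counted with multiplicity, one has $P'/P = S_1(X) := \sum_i \frac{1}{X-z_i}$ and $(P'/P)' = -S_2(X) := -\sum_i \frac{1}{(X-z_i)^2}$, whence a direct computation yields
\[ nPP'' - (n-1)(P')^2 \;=\; P^2\bigl(S_1(X)^2 - nS_2(X)\bigr). \]
Thus $(P\ref{P2})$ is equivalent to the pointwise inequality $S_1(X)^2 \leq nS_2(X)$ on $\Rb$ (automatic at the zeros of $P$ by continuity). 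For real $z_i$ this is exactly the Cauchy--Schwarz step used in Proposition~\ref{concroots}; the conjecture asserts the converse: as soon as any $z_i \notin \Rb$, the inequality must fail at some real point.

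My plan is to argue by contradiction. The cases $n\leq 3$ are covered by Theorem~\ref{thmdeg123}, and Proposition~\ref{propmanu} allows swapping $P$ with $\tilde{P}$ at will. Suppose $P$ has a complex conjugate pair $a\pm ib$ with $b>0$. This pair contributes $\frac{2(X-a)}{(X-a)^2+b^2}$ to $S_1$ and $\frac{2[(X-a)^2-b^2]}{[(X-a)^2+b^2]^2}$ to $S_2$. The privileged test point is $X=a$: the contribution to $S_1$ vanishes, while that to $S_2$ is the strictly negative number $-2/b^2$. When the other $n-2$ roots are all real, writing $A = \sum_j \frac{1}{a-r_j}$ and $B = \sum_j \frac{1}{(a-r_j)^2}$, one obtains
\[ S_1(a)^2 - n S_2(a) \;=\; A^2 - nB + \frac{2n}{b^2}. \]
Cauchy--Schwarz on $n-2$ terms gives $A^2 \leq (n-2)B$, so $A^2 - nB \leq -2B$; when $B$ is small (the $r_j$ lie far from $a$), the positive term $+2n/b^2$ dominates and $(P\ref{P2})$ already fails at $X=a$. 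When $B$ is large because some $r_j$ clusters near $a$, the function $S_1^2 - nS_2$ tends to $-\infty$ at the pole $X=r_j$, so a continuity argument on the interval $[r_j,a]$ should still force a sign change, again violating $(P\ref{P2})$.

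The main obstacle is the general case with several complex conjugate pairs, whose negative contributions to $S_2$ may cancel mutually and for which the single-point test at $X=a$ no longer suffices. I would attack this with two complementary strategies. First, exploit $P\in\Rb_+[X]$: real roots must be non-positive, and Enestr\"om--Kakeya-type bounds constrain the positions of the complex roots, restricting the admissible geometries. Second, treat $\Psi(X) := S_1(X)^2 - nS_2(X)$ as a rational function whose pole structure is completely prescribed by the $z_i$; its global non-positivity on $\Rb$, combined with the effective degree bound $\deg\bigl(nPP''-(n-1)(P')^2\bigr) \leq 2n-4$ (the two leading coefficients vanish identically by direct computation), should force residue-type constraints at each $z_i$ inconsistent with $z_i\notin\Rb$. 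The hardest step is quantifying the cancellation among several complex pairs; I expect an induction on the number of such pairs, combined with a global integral identity or a Hermite--Biehler type separation argument, will be necessary to close the proof in full generality.
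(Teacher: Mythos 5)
The statement you are addressing is Conjecture~\ref{conj3} of the paper; the author explicitly writes ``we were not able to prove it'' and establishes it only for $\deg P = 2$ or $3$ via explicit discriminant computations (Theorem~\ref{thmdeg123}). There is therefore no proof in the paper to compare against: you are attacking an open problem, and both you and the author stop short of a complete argument.

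Your reduction $nPP'' - (n-1)(P')^2 = P^2\bigl(S_1(X)^2 - nS_2(X)\bigr)$ is correct and is essentially the same partial-fraction device the paper already uses to prove the \emph{forward} implication in Proposition~\ref{concroots}; it is the right starting point. But two steps in your sketch do not close. First, the Cauchy--Schwarz inequality $A^2\leq(n-2)B$ bounds $S_1(a)^2-nS_2(a)$ from \emph{above}, whereas to contradict the hypothesis you must show this quantity is positive, i.e.\ you need a lower bound. The relevant lower bound is simply $A^2\geq 0$, giving $S_1(a)^2-nS_2(a)\geq n\bigl(2/b^2-B\bigr)$, and this only settles the case $B<2/b^2$; the Cauchy--Schwarz step, as used, is vacuous. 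Second, the large-$B$ ``continuity argument on $[r_j,a]$'' does not yield a contradiction: $S_1^2-nS_2\to-\infty$ at the real pole $r_j$, and without first establishing that $S_1(a)^2-nS_2(a)>0$ there is no sign change to detect — the function may well remain negative throughout the interval, which is precisely what the hypothesis would predict. Finally, you correctly identify that with several complex-conjugate pairs there is no single privileged test point, but the proposed mechanisms (residue constraints, Hermite--Biehler separation, a global integral identity) are wishes rather than arguments. The degree observation $\deg\bigl(nPP''-(n-1)(P')^2\bigr)\leq 2n-4$ is correct and could be useful, but by itself it says nothing about the sign of that polynomial on $\Rb$. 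In short, your programme is a reasonable line of attack and consistent with the paper's partial results, but as written it leaves the conjecture open in exactly the same places the paper does.
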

We established in theorem \ref{thmdeg123} that
\[\text{Conjecture \ref{conj3} is true for } \deg P = 2 \text{ or } 3.\]

In section \ref{sectionsigma}, we showed by the computation of the determinant of the hessian that
\[\text{Conjectures \ref{mainconj1} and \ref{mainconj2} are true for } p = 2.\]
That is to say,
\begin{align*}
\Kc_n^2 &= \left\{\vec{a} = (a_0,a_1,a_2) \in (\Rb_+)^3 \quad \big| \quad \bar{f}_{\vec{a}} \text{ is real-rooted} \right\} \\
&= \left\{\vec{a} = (a_0,a_1,a_2) \in (\Rb_+)^3 \quad \big| \quad na_1^2 - 2(n-1)a_0a_2 \geq 0 \right\}.
\end{align*}
For $p \geq 3$, we did not manage to get such a complete description of $\Kc_n^p$, but we determined some of its subsets that are quite wide. This is theorem \ref{Pascinde}, according to which
\[\left\{\vec{a} \in (\Rb_+)^{p+1} \quad \big| \quad (a_0, a_1, \ldots, a_p) \begin{array}{l}\text{ is the first part of the sequence of some} \\ \text{ polynomial with all roots real and non-positive} \end{array} \right\} \subset \Kc_n^p.\]
Some sufficient criterion for a polynomial to be real-rooted then implies the belonging to $\Kc_n^p$. For instance, Kurtz criterion gives
\[\left\{\vec{a} \in (\Rb_+)^{p+1} \quad \big| \quad \forall~ 1 \leq k \leq n-1, \quad 4a_{k-1}a_{k+1} < a_k^2 \right\} \subset \Kc_n^p.\]

In section \ref{sectionhomog} we proved the equivalence between the different formulations of conjecture \ref{mainconj4}, which is the widest algebraic description that we think is holding for $\Kc_n^p$. We established that
\[\text{Conjecture \ref{mainconj4} is true for } p = 2,\]
which is an other way to prove conjecture \ref{mainconj2} for $p=2$.

The five conjectures exposed in this paper bring to light unknown properties of the elementary symmetric polynomials, and promise interesting further research.

\section*{Appendix}
An usable formula for the general determinant of the hessian of $f_{\vec{a}}$ is still open and would be fruitful for the following of section \ref{sectionsigma}. In this appendix, we present two determinants that we manage to compute.

\subsection*{\texorpdfstring{$p=3$}{p=3}}

We have followed the same reasoning as in theorem \ref{thmp2} and computed the determinant for $p=3$, but the complexity of the determinant did not allow us to conclude about the concavity of $f_{\vec{a}}^{1/3}$.
\begin{prop}
Let $\vec{a} = (a_0, a_1, a_2, a_3) \in (\Rb_+)^4$. We set $f := f_{\vec{a}} = a_0 + a_1 \sigma_1 + a_2 \sigma_2 + a_3 \sigma_3 \in \Rb_+[X_1,\ldots,X_n]$.
Then
\begin{align*}
\det H(f^{1/3})(x) \equiv -\dfrac{1}{6}(-2f(x))^{n-1}\Bigg[&\dfrac{\sigma_n(y)}{\sigma_1(y)}\left(\sigma_1(y)\sum_i\frac{f_i(x)}{y_i} - (n-2)\sigma_1(D(f)(x))\right)^2 \\
&+ \left((n-2)^2\sigma_n(y) - \sigma_1(y)\sigma_{n-1}(y)\right)\left(\sum_i \frac{f_i(x)^2}{y_i} - \frac{\sigma_1(D(f)(x))}{\sigma_1(y)} + 3f(x)\right)\Bigg],
\end{align*}
where
\[y = \frac{a_2 + a_3\sigma_1(x)}{2}\Ib - a_3 x,\]
\end{prop}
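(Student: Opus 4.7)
The plan is to apply Lemma \ref{detHfalpha} with $\alpha = \tfrac{1}{3}$, which yields
\[\det H(f^{1/3}) \equiv f\det H - \tfrac{2}{3}\,{}^tD(\com H)D,\]
and then to compute $D(f)$, $H(f)$, $\det H$ and $\com H$ explicitly. Since $H$ is symmetric, $\com H = (\det H)H^{-1}$, so the quadratic form rewrites as $(\det H)\,{}^tDH^{-1}D$, and the problem reduces to determining $\det H$ and $H^{-1}$.

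First I would compute the partial derivatives of $f = a_0 + a_1\sigma_1 + a_2\sigma_2 + a_3\sigma_3$ using $\partial_i\sigma_k = \sigma_{k-1}(x'_i)$:
\[D(f)_i = a_1 + a_2(\sigma_1-x_i) + a_3\sigma_2(x'_i),\qquad H(f)_{ij} = \bigl(a_2 + a_3(\sigma_1-x_i-x_j)\bigr)(1-\delta_{ij}),\]
so in particular $H_{ii}=0$. The key observation is that with $y_i = \frac{a_2+a_3\sigma_1(x)}{2} - a_3 x_i$ as in the statement, the off-diagonal entry of $H$ becomes exactly $y_i+y_j$, and so
\[H = y\otimes\Ib + \Ib\otimes y - 2\Diag(y) = -2\Diag(y) + UV^{T},\qquad U = [y\,|\,\Ib],\ V = [\Ib\,|\,y].\]
This exhibits $H$ as a diagonal matrix plus a rank-$2$ correction, whereas in the proof of Theorem \ref{thmp2} one had only a rank-$1$ correction to a multiple of the identity. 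This is the structural difference that complicates the $p=3$ case while keeping it tractable.

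Then I would apply the matrix-determinant lemma and the Sherman--Morrison--Woodbury formula. The former gives
\[\det H = \det(-2\Diag(y))\cdot\det\bigl(I_2 + V^{T}(-2\Diag(y))^{-1}U\bigr),\]
and the $2\times 2$ inner determinant reduces to the four scalars $\sum 1 = n$, $\sum y_i = \sigma_1(y)$, $\sum 1/y_i = \sigma_{n-1}(y)/\sigma_n(y)$ and $\sum y_i/y_i = n$, yielding
\[\det H = \tfrac{(-2)^n}{4}\bigl((n-2)^2\sigma_n(y) - \sigma_1(y)\sigma_{n-1}(y)\bigr).\]
The inversion formula decomposes $H^{-1}$ as $-\tfrac{1}{2}\Diag(1/y)$ plus a rank-$2$ correction, and plugging $D(f)$ into both sides produces a diagonal piece $-\tfrac12\sum f_i^2/y_i$ together with a rank-$2$ piece which is entirely governed by the two linear functionals $S := \sum_i f_i/y_i$ and $T := \sigma_1(D(f)) = \sum_i f_i$.

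Finally, multiplying the quadratic form by $\det H$ and recombining with $f\det H$, the rank-$2$ piece collapses into the perfect square $\bigl(\sigma_1(y)S - (n-2)T\bigr)^2$ with prefactor $\sigma_n(y)/\sigma_1(y)$, while the diagonal piece merges with $f\det H$ to give the factor $\sum_i f_i^2/y_i - T/\sigma_1(y) + 3f$ multiplied by the Newton-type quantity $(n-2)^2\sigma_n(y) - \sigma_1(y)\sigma_{n-1}(y)$; the $3f$ contribution ultimately traces back to Euler's identity $\sum_i x_i\,\partial_i\sigma_k = k\sigma_k$ applied to the homogeneous pieces of $f$. The main obstacle is not strategic but combinatorial: the three scalars $S$, $T$ and $\sum f_i^2/y_i$ are mixed $x$- and $y$-quantities, and regrouping them into the two target summands requires repeated use of the affine relation $\sigma_1(y) = \tfrac{na_2 + (n-2)a_3\sigma_1(x)}{2}$ together with $\sum_i\sigma_{k-1}(x'_i) = (n-k+1)\sigma_{k-1}(x)$. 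This bookkeeping is heavy enough to explain why the authors present the identity in an appendix and do not try to extract further concavity criteria from it in the body of the paper.
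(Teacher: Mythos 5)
Your strategy follows the paper's through the same initial reduction (Lemma \ref{detHfalpha}, the change of variable $y_i = \tfrac{a_2+a_3\sigma_1(x)}{2} - a_3x_i$, and the decomposition $H = y\otimes\Ib + \Ib\otimes y - 2\Diag(y)$), but diverges in how you exploit the rank-$2$ structure. The paper expands $\det H$ and each cofactor $\bar H_{ij}$ column by column using multilinearity and patiently recombines, which is long but entirely self-contained. You instead invoke the generalised matrix-determinant lemma and the Sherman--Morrison--Woodbury formula to reduce everything to a $2\times2$ determinant and inverse built out of the four scalars $n$, $\sigma_1(y)$, $\sigma_{n-1}(y)/\sigma_n(y)$ and $n$, and then to two linear functionals $S = \sum_i f_i/y_i$ and $T = \sum_i f_i$. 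This is a cleaner and more conceptual route: the $2\times2$ determinant $\tfrac{(-2)^n}{4}\bigl((n-2)^2\sigma_n(y) - \sigma_1(y)\sigma_{n-1}(y)\bigr)$ drops out in one step (and equals the paper's $(-2)^{n-2}[(n-2)^2\sigma_n(y) - \sigma_1(y)\sigma_{n-1}(y)]$), and the quadratic form $^tD\,(\com H)\,D = (\det H)\,{}^tD H^{-1}D$ decomposes at once into a diagonal piece $-\tfrac12\sum_i f_i^2/y_i$ and a rank-$2$ piece in $S$ and $T$, avoiding the paper's explicit minor computation. What your approach buys is reusable structure; what the paper's buys is that it never appeals to machinery beyond column multilinearity. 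Two small quibbles: the factor $3f$ in the final bracket does not trace back to Euler's identity but simply to the scaling $-2/(\alpha-1) = 3$ at $\alpha=1/3$, as the paper's last display makes plain; and the statement as written has a typo (the paper's own proof produces $\sigma_1(D(f))^2/\sigma_1(y)$, not $\sigma_1(D(f))/\sigma_1(y)$), so when you say the diagonal piece yields $\sum_i f_i^2/y_i - T/\sigma_1(y) + 3f$ you have inherited the typo; the correct term is $T^2/\sigma_1(y)$.
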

\begin{proof}
We use the same notations as in lemma \ref{detHfalpha}. We have
\[H_{ii}(x) = 0, \qquad H_{i \neq j}(x) = a_2 + a_3(\sigma_1(x) - x_i - x_j).\]
We set $y_i = \frac{a_2 + a_3\sigma_1(x)}{2} - a_3 x_i$, so that $H_{i \neq j}(x) = y_i + y_j$, ie.
\[H(x) = \begin{pmatrix}
0 & y_1 + y_2 & \ldots & y_1 + y_n \\
y_2 + y_1 & 0 & & \vdots \\
\vdots & & \ddots & y_{n-1} + y_n \\
y_n + y_1 & \ldots & y_n + y_{n-1} & 0
\end{pmatrix} = y\otimes \Ib + \Ib \otimes y - 2 \Diag(y_1, \ldots, y_n).\]
Hence
\begin{align*}
\det H(x) = &\det[y_1\Ib + y - 2y_1e_1 \ | \ \ldots \ | \ y_n\Ib + y - 2y_ne_n] \\
	= &\det[- 2y_1e_1 \ | \ \ldots \ | \ - 2y_ne_n] \\
	&+ \sum_j \det[- 2y_1e_1 \ | \ \ldots \ | \ y \ | \ \ldots \ | \ - 2y_ne_n] \\
	&+ \sum_j \det[- 2y_1e_1 \ | \ \ldots \ | \ y_j \Ib \ | \ \ldots \ | \ - 2y_ne_n] \\
	&+ \sum_{i \neq j} \det[- 2y_1e_1 \ | \ \ldots \ | \ y_i \Ib \ | \ \ldots \ | \ y \ | \ \ldots \ | \ - 2y_ne_n] \\
	= &(-2)^n y_1\ldots y_n + 2\sum_j (-2)^{n-1}y_1 \ldots y_n \\
	&+ \sum_{i \neq j}(-2)^{n-2}y_1 \ldots y_{i-1}(y_iy_j - y_i^2)y_{i+1}\ldots y_{j-1}y_{j+1}\ldots y_n \\
	= &y_1\ldots y_n \left[(-2)^n + 2n(-2)^{n-1} + n(n-1)(-2)^{n-2} - (-2)^{n-2} \sum_{i \neq j} \frac{y_i}{y_j}\right] \\
	= &(-2)^{n-2}\left[(n-2)^2\sigma_n(y) - \sigma_1(y)\sigma_{n-1}(y)\right].
\end{align*}

Now we have to compute the comatrix of $H(x)$. Let us begin with $i=1$, $j=n$. We get
\begin{align*}
\bar{H}_{1n}(x) = &\det\begin{pmatrix}
y_2 + y_1 & 0 & y_2 + y_3 & \ldots & y_2 + y_{n-1} \\
y_3 + y_1 & y_3 + y_2 & 0 & \ldots & \vdots \\
& & y_4 + y_3 & \ddots & y_{n-2} + y_{n-1} \\
\vdots & \vdots & & & 0 \\
y_n + y_1 & y_n + y_2 & \ldots & y_n + y_{n-2} & y_n + y_{n-1}
\end{pmatrix} \\
= &\det[y'_1 + y_1\Ib \ | \ y'_1 + y_2\Ib -2y_2e_1 \ | \ \ldots \ | \ y'_1 + y_{n-1}\Ib - 2y_{n-1}e_{n-2}] \\
= &\det[y'_1 \ | \ -2y_2e_1 \ | \ \ldots \ | \ -2y_{n-1}e_{n-2}] \\
& +\sum_{j=2}^{n-1} \det[y'_1 \ | \ -2y_2e_1 \ | \ \ldots \ | \ y_j\Ib \ | \ \ldots \ | \ -2y_{n-1}e_{n-2}] \\
& +\sum_{j=2}^{n-1} \det[y_1\Ib \ | \ -2y_2e_1 \ | \ \ldots \ | \ y'_1 \ | \ \ldots \ | \ -2y_{n-1}e_{n-2}] \\
& +\det[y_1\Ib \ | \ -2y_2e_1 \ | \ \ldots \ | \ -2y_{n-1}e_{n-2}] \\
= &(-1)^ny_n(-2y_1)\ldots(-2y_{n-1}) \\
& +\sum_{j=2}^{n-1}(-2y_2)(-1)^{1+2}(-2y_3)(-1)^{1+2}\ldots (y_j^2 - y_ny_j)\ldots (-2y_{n-1})(-1)^{1+2} \\
& +(-1)\sum_{j=2}^{n-1}(-2y_2)(-1)^{1+2}(-2y_3)(-1)^{1+2}\ldots (y_jy_1 - y_ny_1)\ldots (-2y_{n-1})(-1)^{1+2} \\
& +(-1)^ny_1(-2y_2)\ldots(-2y_{n-1}) \\
= &2^{n-2}y_2\ldots y_n + 2^{n-2}y_1\ldots y_{n-1} \\
& + 2^{n-3}\sum_{j=2}^{n-1}y_2\ldots y_{n-1}(y_j - y_n)\left(1-\frac{y_1}{y_j}\right) \\
= &2^{n-3}y_2\ldots y_{n-1}\left[2y_n + 2y_1 + \sum_{j=2}^{n-1}\left(y_j - y_n - y_1 + \frac{y_1y_n}{y_j}\right)\right] \\
= &2^{n-3}\left[\frac{\sigma_n(y)\sigma_1(y)}{y_1y_n} - (n-2)\sigma_n(y)\left(\frac{1}{y_1} + \frac{1}{y_n}\right) + \sigma_{n-1}(y)\right].
\end{align*}
Similar calculus hold for any $i \neq j$:
\[\bar{H}_{ij}(x) = (-1)^{i+j}(-2)^{n-3}\left[\frac{\sigma_n(y)\sigma_1(y)}{y_iy_j} - (n-2)\sigma_n(y)\left(\frac{1}{y_i} + \frac{1}{y_j}\right) + \sigma_{n-1}(y)\right],\]
while for $i=j$ we recover the same form as the determinant of $H$:
\[\bar{H}_{ii}(x) = (-2)^{n-3}\left[(n-3)^2\sigma_{n-1}(y'_i) - \sigma_1(y'_i)\sigma_{n-2}(y'_i)\right],\]

We have seen in \eqref{Hgalpha} that
\[\det H(f^\alpha) \equiv f \det H + (\alpha-1) \sum_{i=1}^n \sum_{j=1}^n f_i f_j (-1)^{i+j} \bar{H}_{ij}.\]
So we compute
\begin{align*}
\sum_{i=1}^n &\sum_{j=1}^n f_i(x) f_j(x) (-1)^{i+j} \bar{H}_{ij}(x) \\
&= \sum_{i=1}^n f_i(x) \Bigg[\sum_{j \neq i} (-1)^{i+j} \left(f_j(x) (-1)^{i+j}(-2)^{n-3}\left[\frac{\sigma_n(y)\sigma_1(y)}{y_iy_j} - (n-2)\sigma_n(y)\left(\frac{1}{y_i} + \frac{1}{y_j}\right) + \sigma_{n-1}(y)\right]\right) \\
& \qquad + (-1)^{i+i}f_i(x) (-1)^{i+i}(-2)^{n-3}\left[(n-3)^2\sigma_{n-1}(y'_i) - \sigma_1(y'_i)\sigma_{n-2}(y'_i)\right]\Bigg] \\
&= \sum_{i=1}^n (-2)^{n-3} f_i(x) \Bigg[\left(\sum_{j \neq i} \frac{f_j(x)}{y_j}\right)\left(\frac{\sigma_n(y)\sigma_1(y)}{y_i} - (n-2)\sigma_n(y)\right) \\
& \qquad + \left(\sum_{j \neq i} f_j(x)\right)\left(\sigma_{n-1}(y) - (n-2)\frac{\sigma_n(y)}{y_i}\right) + f_i(x) \left[(n-3)^2\sigma_{n-1}(y'_i) - \sigma_1(y'_i)\sigma_{n-2}(y'_i)\right]\Bigg] \\
&= (-2)^{n-3} \sum_{i=1}^n f_i(x) \Bigg[\left(\sum_{j} \frac{f_j(x)}{y_j} - \frac{f_i(x)}{y_i}\right)\left(\frac{\sigma_n(y)\sigma_1(y)}{y_i} - (n-2)\sigma_n(y)\right) \\
& \qquad + \left(\sigma_1(D(f)(x)) - f_i(x)\right)\left(\sigma_{n-1}(y) - (n-2)\frac{\sigma_n(y)}{y_i}\right) \\
& \qquad + f_i(x) \left[(n-3)^2\sigma_{n-1}(y'_i) - (\sigma_1(y) - y_i)\frac{\sigma_{n-1}(y) - \sigma_{n-1}(y'_i)}{y_i}\right]\Bigg] \\
&= (-2)^{n-3} \Bigg[ \sigma_n(y)\sigma_1(y)\left[\left(\sum_i \frac{f_i(x)}{y_i}\right)^2 - \cancel{\sum_i \frac{f_i(x)^2}{y_i^2}}\right] - (n-2)\sigma_1(D(f)(x))\sigma_n(y)\sum_j \frac{f_j(x)}{y_j} \\
& \qquad + (n-2)\sigma_n(y)\sum_i \frac{f_i(x)^2}{y_i} - (n-2)\sigma_n(y)\left(\sigma_1(D(f)(x))\sum_i \frac{f_i(x)}{y_i} - \sum_i \frac{f_i(x)^2}{y_i}\right) \\
& \qquad + \sigma_1(D(f)(x))^2\sigma_{n-1}(y) - \sigma_{n-1}(y)\cancel{\sum_i f_i(x)^2} + (n-3)^2 \sigma_n(y) \sum_i \frac{f_i(x)^2}{y_i} \\
& \qquad - \left(\sigma_1(y)\sigma_{n-1}(y) \sum_i \frac{f_i(x)^2}{y_i} - \sigma_1(y)\sigma_n(y)\cancel{\sum_i \frac{f_i(x)^2}{y_i^2}} - \sigma_{n-1}(y)\cancel{\sum_i f_i(x)^2} + \sigma_n(y) \sum_i \frac{f_i(x)^2}{y_i}\right) \Bigg] \\
&= (-2)^{n-3} \Bigg[ \sigma_n(y)\sigma_1(y)\left(\sum_i \frac{f_i(x)}{y_i}\right)^2 - 2(n-2)\left(\sum_i \frac{f_i(x)}{y_i}\right)\sigma_n(y)\sigma_1(D(f)(x)) + \sigma_{n-1}(y)\sigma_1(D(f)(x))^2 \\
& \qquad + \left((n-2)^2\sigma_n(y) - \sigma_1(y)\sigma_{n-1}(y)\right)\sum_i \frac{f_i(x)^2}{y_i} \Bigg]\\
&= (-2)^{n-3} \Bigg[\frac{\sigma_n(y)}{\sigma_1(y)}\left(\sigma_1(y)\sum_i \frac{f_i(x)}{y_i} - (n-2)\sigma_1(D(f)(x))\right)^2 \\
& \qquad + \left((n-2)^2\sigma_n(y) - \sigma_1(y)\sigma_{n-1}(y)\right)\left[\sum_i \frac{f_i^2}{y_i} - \frac{\sigma_1(D(f)(x))^2}{\sigma_1(y)}\right]\Bigg].
\end{align*}

Adding $\det H$, we get
\begin{align*}
\det H(f^\alpha)(x) &\equiv f(x)^n \det H(x) + (\alpha - 1)f(x)^{n-1} \sum_{i=1}^n \sum_{j=1}^n f_i(x) f_j(x) (-1)^{i+j} \bar{H}_{ij}(x) \\
&= f(x)^n (-2)^{n-2}\left[(n-2)^2\sigma_n(y) - \sigma_1(y)\sigma_{n-1}(y)\right] \\
& \qquad + (\alpha-1) f(x)^{n-1}(-2)^{n-3} \Bigg[\frac{\sigma_n(y)}{\sigma_1(y)}\left(\sigma_1(y)\sum_i \frac{f_i(x)}{y_i} - (n-2)\sigma_1(D(f)(x))\right)^2 \\
& \qquad + \left((n-2)^2\sigma_n(y) - \sigma_1(y)\sigma_{n-1}(y)\right)\left[\sum_i \frac{f_i(x)^2}{y_i} - \frac{\sigma_1(D(f)(x))^2}{\sigma_1(y)}\right]\Bigg] \\
&= (-2f(x))^{n-1} \frac{\alpha-1}{4} \Bigg[\frac{\sigma_n(y)}{\sigma_1(y)}\left(\sigma_1(y)\sum_i \frac{f_i(x)}{y_i} - (n-2)\sigma_1(D(f)(x))\right)^2 \\
& \qquad + \left((n-2)^2\sigma_n(y) - \sigma_1(y)\sigma_{n-1}(y)\right)\left[\sum_i \frac{f_i(x)^2}{y_i} - \frac{\sigma_1(D(f)(x))^2}{\sigma_1(y)} - \frac{2}{\alpha-1}f(x) \right]\Bigg],
\end{align*}
and
\begin{align*}
\det H(f^{1/3})(x) &\equiv -\frac{1}{6}(-2f(x))^{n-1} \Bigg[\frac{\sigma_n(y)}{\sigma_1(y)}\left(\sum_j y_j\sum_i \frac{f_i(x)}{y_i} - (n-2)\sum_i f_i(x) \right)^2 \\
& \qquad + \left((n-2)^2\sigma_n(y) - \sigma_1(y)\sigma_{n-1}(y)\right)\left[\sum_i \frac{f_i(x)^2}{y_i} - \frac{\left(\sum_jf_j(x)\right)^2}{\sum_i y_i} + 3 f(x) \right]\Bigg].
\end{align*}
\end{proof}

\subsection*{\texorpdfstring{$p=n$}{p=n}}
An other computable determinant is the case $p=n$, and all the $a_k = 0$, $1 < k < n$.

We use the following lemma:
\begin{lem} \label{sigman}
Let $\mu, \nu \in \Rb$, and
\[\Lambda = \begin{pmatrix} \lambda_1 & 0 & \ldots & 0 \\ 0 & \lambda_2 & & \vdots \\ \vdots & & \ddots & 0 \\ 0 & \ldots & 0 & \lambda_n \end{pmatrix} \in \Mc_n(\Rb), \qquad U = \begin{pmatrix} u_1 \\ \vdots \\ u_n \end{pmatrix} \in \Rb^n, \qquad V = \begin{pmatrix} v_1 \\ \vdots \\ v_n \end{pmatrix} \in \Rb^n.\]
Then
\begin{align*}
\det \left(\Lambda + \mu U \otimes U + \nu V \otimes V\right) = (\det \Lambda) \Big[1 &+ \mu ^tU \Lambda^{-1}U + \nu ^tV \Lambda^{-1}V \\
&+ \mu\nu\left(\left(^tU \Lambda^{-1}U\right)\left(^tV \Lambda^{-1}V\right) - \left(^tU\Lambda^{-1}V\right)^2\right)\Big].
\end{align*}
\end{lem}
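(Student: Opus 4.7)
The plan is to view $\Lambda + \mu U\otimes U + \nu V\otimes V$ as a rank-two perturbation of $\Lambda$ and apply the standard rank-$k$ matrix determinant lemma (Weinstein--Aronszajn identity): for an invertible $A\in \Mc_n(\Rb)$ and $n\times k$ matrices $X, Y$,
\[\det(A + X\,^tY) = \det(A)\det\left(I_k + \,^tY A^{-1} X\right).\]
First I would assume $\Lambda$ is invertible, i.e.\ all $\lambda_i \neq 0$; the degenerate case will follow by the polynomial identity principle, since after multiplying through by $\det \Lambda$ the claimed formula is a polynomial identity in the scalar variables $(\lambda_i, u_i, v_i, \mu, \nu)$.

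Taking $X = (U\mid V)$ of size $n\times 2$ and $Y = X\,\Diag(\mu,\nu)$, so that $X\,^tY = \mu U\otimes U + \nu V\otimes V$, the lemma immediately reduces the problem to a single $2\times 2$ determinant:
\[\det\begin{pmatrix} 1 + \mu\,^tU\Lambda^{-1}U & \mu\,^tU\Lambda^{-1}V \\ \nu\,^tV\Lambda^{-1}U & 1 + \nu\,^tV\Lambda^{-1}V \end{pmatrix}.\]
Expanding this and using the symmetry $^tU\Lambda^{-1}V = \,^tV\Lambda^{-1}U$ (which holds because $\Lambda^{-1}$ is diagonal and hence symmetric) yields exactly the bracketed expression in the statement, and multiplying by $\det \Lambda$ concludes.

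Should a self-contained argument be preferred, one can instead expand the determinant by multilinearity in the columns: the $j$-th column is $\lambda_j e_j + \mu u_j U + \nu v_j V$, and any term with two columns both proportional to $U$ (or both to $V$) vanishes by linear dependence. Only four families of contributions survive. The all-diagonal family gives $\det \Lambda$. The single-$U$ and single-$V$ families sum over the free position $k$ to $\mu\,\det(\Lambda)\,^tU\Lambda^{-1}U$ and $\nu\,\det(\Lambda)\,^tV\Lambda^{-1}V$ respectively. The mixed family, with one $U$-column at position $k$ and one $V$-column at position $l\neq k$, has $2\times 2$ minor $\mu\nu(u_kv_l - u_lv_k)^2$; summing it and invoking Lagrange's identity
\[\left(\sum_k \frac{u_k^2}{\lambda_k}\right)\left(\sum_l \frac{v_l^2}{\lambda_l}\right) - \left(\sum_k \frac{u_kv_k}{\lambda_k}\right)^2 = \sum_{k<l}\frac{(u_kv_l - u_lv_k)^2}{\lambda_k\lambda_l}\]
produces exactly the $\mu\nu$ cross-term in the desired form. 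The main obstacle is essentially nonexistent: this is a classical rank-two determinant update, and the only minor technical point is handling the singular case $\det\Lambda = 0$, immediate from the polynomial identity principle.
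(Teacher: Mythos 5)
Your proposal is correct, and it actually offers two routes. The paper's own proof is your \emph{second}, self-contained one: expansion of the determinant by multilinearity in the columns, grouping contributions by how many columns are replaced by $U$- or $V$-terms, with the two-$U$ and two-$V$ families vanishing by linear dependence. (Your phrasing of the mixed family is slightly loose: a single ordered pair $(k,l)$ contributes the minor $\mu\nu\, u_k v_l(u_k v_l - u_l v_k)$, and the square $(u_k v_l - u_l v_k)^2$ appears only after summing the $(k,l)$ and $(l,k)$ orientations, which is exactly where Lagrange's identity enters. But the endpoint is the same as the paper's $\mu\nu\sum_{i\neq j}(u_i^2 v_j^2 - u_i u_j v_i v_j)/(\lambda_i\lambda_j)$.)

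Your \emph{first} route, via the rank-$2$ Weinstein--Aronszajn / matrix determinant lemma with $X = (U\mid V)$ and $Y = X\Diag(\mu,\nu)$, is a genuinely different and shorter argument: it reduces the $n\times n$ determinant to a $2\times2$ one in one step, and the singular case $\det\Lambda = 0$ is handled by the polynomial-identity argument you give. What this buys is brevity and transparency about \emph{why} the cross-term has a Gram-determinant shape $(^tU\Lambda^{-1}U)(^tV\Lambda^{-1}V)-(^tU\Lambda^{-1}V)^2$, at the cost of invoking a nontrivial (though standard) external identity. The paper's multilinear expansion is more elementary and self-contained, which is presumably why it was chosen, and it also sets up the bookkeeping reused in the surrounding determinant computations of the appendix.
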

\begin{proof}
We just expand the determinant:
\begin{align*}
\det \left[\Lambda + \mu U \otimes U + \nu V \otimes V\right]
	= &\det \left[\lambda_1e_1 + \mu u_1 U + \nu v_1 V \ | \ \ldots \ | \ \lambda_ne_n + \mu u_n U + \nu v_n V\right] \\
	= &\det \left[\lambda_1e_1 \ | \ \ldots \ | \ \lambda_ne_n\right] \\
	&+ \mu\sum_{j=1}^n\det\left[\lambda_1e_1 \ | \ \ldots \ | \ u_j U \ | \ \ldots \ | \ \lambda_ne_n\right] \\
	&+ \nu\sum_{j=1}^n\det\left[\lambda_1e_1 \ | \ \ldots \ | \ v_j V \ | \ \ldots \ | \ \lambda_ne_n\right] \\
	&+ \mu\nu\sum_{i \neq j}\det\left[\lambda_1e_1 \ | \ \ldots \ | \ u_i U \ | \ \ldots \ | \ v_j V \ | \ \ldots \ | \ \lambda_ne_n\right] \\
	= &\lambda_1 \ldots \lambda_n\left[1 + \mu \sum_j \frac{u_j^2}{\lambda_j} + \nu \sum_j \frac{v_j^2}{\lambda_j} + \mu\nu\sum_{i \neq j} \frac{u_i^2v_j^2 - u_iu_jv_iv_j}{\lambda_i\lambda_j} \right].
\end{align*}
\end{proof}

\begin{prop}
Let $n \geq 3$ and $\vec{a} = (a_0, a_1, a_n) \in (\Rb_+)^3$. We set $f := f_{\vec{a}} = a_0 + a_1 \sigma_1 + a_n \sigma_n \in \Rb_+[X_1,\ldots,X_n]$. Then
\[\det H(f^{1/n})(x) \equiv \frac{(-1)^n}{\sigma_n(x)^2} \left[\left(\left(\sum_iz_i\right)^2 - (n-1)\sum_iz_i^2\right) - nf(x)a_n\sigma_n(x)\right],\]
with
\[z_i = a_1x_i + a_n\sigma_n.\]
\end{prop}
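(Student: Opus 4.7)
The plan is to apply Lemma \ref{detHfalpha} with $\alpha = 1/n$ to $f = a_0 + a_1\sigma_1 + a_n\sigma_n$, so I need to compute $\det H(f)$ and the quadratic form ${}^t\!D(f)\,(\com H)\,D(f)$, then assemble the pieces and simplify. The key observation is that the Hessian has a very special structure: since $\sigma_n(x) = x_1 \cdots x_n$ is multilinear, its second partials vanish on the diagonal and equal $\sigma_n/(x_i x_j)$ off the diagonal, so setting $D_x = \Diag(x_1,\ldots,x_n)$ one has
$$H(f)(x) = a_n \sigma_n(x)\, D_x^{-1}\bigl(\Ib\, {}^t\!\Ib - \Id\bigr) D_x^{-1}.$$
This factorisation reduces everything to the already familiar matrix $\Ib\, {}^t\!\Ib - \Id$ from the proof of Theorem \ref{thmp2}.

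First I would use the known spectrum $\{n-1,-1,\ldots,-1\}$ of $\Ib\,{}^t\!\Ib - \Id$ to read off $\det(\Ib\,{}^t\!\Ib - \Id) = (-1)^{n-1}(n-1)$ and $(\Ib\,{}^t\!\Ib - \Id)^{-1} = \tfrac{1}{n-1}\Ib\,{}^t\!\Ib - \Id$. From the factorisation above I immediately deduce $\det H(f) = (-1)^{n-1}(n-1)\, a_n^n\, \sigma_n^{n-2}$ and $H(f)^{-1} = (a_n\sigma_n)^{-1}\bigl(\tfrac{1}{n-1}\, x\,{}^t\!x - \Diag(x_i^2)\bigr)$. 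Second I would notice that the gradient entries factor as $f_i = z_i/x_i$ with $z_i = a_1 x_i + a_n \sigma_n$, so the two scalar quantities that appear in ${}^t\!D(f)\,H(f)^{-1}\,D(f)$ collapse cleanly to ${}^t\!D(f)\, x = \sum_i z_i$ and $\sum_i f_i^2 x_i^2 = \sum_i z_i^2$. Combined with $\com H = (\det H)\,H^{-1}$, this yields a closed form for ${}^t\!D(f)\,(\com H)\,D(f)$ involving only $\sum_i z_i$, $\sum_i z_i^2$, $a_n$ and $\sigma_n$.

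Finally I would substitute both pieces into the identity of Lemma \ref{detHfalpha}, factor out the positive quantity $\tfrac{n-1}{n}\, a_n^{n-1}\, \sigma_n^{n-3}$, and absorb the remaining square factor $\sigma_n^{n-1}$ into the equivalence $\equiv$ so as to expose the $1/\sigma_n^2$ prefactor claimed in the statement. The main bookkeeping obstacle is simply tracking the three interacting signs coming from $(-1)^{n-1}$, $(\tfrac1n - 1)$ and $(n-1)$, which must recombine to the announced $(-1)^n$; no deeper difficulty is expected, since the rank-one-plus-diagonal structure of $H(f)$ handles all of the linear algebra for free, exactly as in the $p=2$ case.
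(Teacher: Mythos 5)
Your proposal is correct and reaches the same final expression, but it routes through a different pair of computations than the paper. The paper, starting from the same factorisation $H(f) = a_n\sigma_n(V\otimes V - T)$ (which is literally your $a_n\sigma_n\,D_x^{-1}(\Ib\,{}^t\!\Ib - \Id)D_x^{-1}$), pulls the scalar $f\,a_n\sigma_n$ out of $\det\bigl[fH + (\tfrac1n-1)D\otimes D\bigr]$ and then invokes Lemma~\ref{sigman}, a self-contained identity for $\det(\Lambda + \mu\, U\otimes U + \nu\, V\otimes V)$ with $\Lambda$ diagonal, evaluated at $\Lambda = T$, $U = D$, $V = V$; it never needs $H^{-1}$ or $\com H$. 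You instead invoke Lemma~\ref{detHfalpha}, the cofactor formula $\det H(f^\alpha) \equiv f\det H + (\alpha-1)\,{}^t\!D\,(\com H)\,D$, and supply $\det H$ and $H^{-1}$ explicitly by reading them off from the spectrum of $\Ib\,{}^t\!\Ib - \Id$ — precisely the machinery of the $p=2$ case in Theorem~\ref{thmp2}. The two routes reduce to the same three scalar invariants ($\sum_i z_i$, $\sum_i z_i^2$, and $\sum_i 1 = n$) at the end, so the bookkeeping is comparable; your way has the virtue of reusing Lemma~\ref{detHfalpha} and the $\Ib\,{}^t\!\Ib-\Id$ inversion already developed for $p=2$, while the paper's way avoids inverting $H$ entirely at the cost of introducing the separate determinant identity of Lemma~\ref{sigman}. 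One small wording nitpick: the factor $\sigma_n^{\,n-1}$ you drop at the end is not generally a perfect square (it is when $n$ is odd, not when $n$ is even), but that is immaterial — for functions the paper's $\equiv$ means equality of sign on the domain, and $\sigma_n > 0$ on $\Gamma_n$, so the factor is safely absorbed regardless of parity.
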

\begin{proof}
We have
\[\det H(f^{1/n}) \equiv \det\left[ fH + \left(\frac{1}{n} - 1 \right) D \otimes D\right]\]
with
\begin{align*}
H(x) &:= H(f)(x) = a_n\sigma_n(x)\begin{pmatrix} 0 & 1/x_1x_2 & \ldots & 1/x_1x_n \\ 1/x_2x_1 & 0 & & \vdots \\ \vdots & & \ddots & 1/x_{n-1}x_n \\ 1/x_nx_1 & \ldots & 1/x_nx_{n-1} & 0 \end{pmatrix} = a_n\sigma_n(x)\left[V \otimes V - T\right], \\
V &:= \begin{pmatrix} 1/x_1 \\ \vdots \\ 1/x_n \end{pmatrix}, \qquad T := \begin{pmatrix} 1/x_1^2 & & 0 \\ & \ddots & \\ 0 & & 1/x_n^2 \end{pmatrix}, \\
D(x) &:= D(f)(x) = \begin{pmatrix} a_1 + a_n \sigma_n(x)/x_1 \\ \vdots \\ a_1 + a_n \sigma_n(x)/x_n \end{pmatrix} = a_1 \Ib + a_n \sigma_n(x) V.
\end{align*}
Hence
\begin{align*}
\det H(f^{1/n})(x) &\equiv \det\left[ f(x) a_n\sigma_n(x)\left(V \otimes V - T\right) + \left(\frac{1}{n} - 1 \right) D \otimes D\right] \\
	&\equiv (-1)^n \det\left[T + \mu D \otimes D - V \otimes V\right]
\end{align*}
with $\mu = \frac{1 - 1/n}{f(x)a_n\sigma_n(x)}$. We use lemma \ref{sigman} and compute
\begin{align*}
\det\left[T + \mu D \otimes D - V \otimes V\right] &= \det T \Big[1 + \mu ^tD T^{-1}D - ^t\!V T^{-1}V \\
&- \mu\left(\left(^tD T^{-1}D\right)\left(^tV \Lambda^{-1}V\right) - \left(^tDT^{-1}V\right)^2\right)\Big] \\
&= \det T\left[1 + \mu\sum_i z_i^2 - n - \mu\left(n\sum_i z_i^2 - \left(\sum_i z_i\right)^2\right)\right] \\
&= (n-1)\det T\left[\frac{1}{nf(x)a_n\sigma_n(x)}\left(\left(\sum_i z_i\right)^2 - (n-1)\sum_i z_i^2\right) - 1\right].
\end{align*}
\end{proof}

\begin{rem}
Actually we do not need this determinant to learn something about the concavity of $f_{\vec{a}}^{1/n}$. Indeed, its restriction to $\Delta$ is never concave, so neither is $f_{\vec{a}}^{1/n}$ on $\Gamma_n$.
\[\bar{f}_{\vec{a}} = a_0 + na_1 X + a_n X^n,\]
so
\[\left(\bar{f}_{\vec{a}}^{1/n}\right)'' \equiv n(n-1)\left[-a_1^2 + a_0a_2 X^{n-2} + (n-1)a_1a_2X^{n-1}\right],\]
which changes of sign between $0$ and $+\infty$.

Moreover, one can state that $\bar{f}_{\vec{a}}$ is compatible with the conjecture \ref{conj3}: $\left(\bar{f}_{\vec{a}}^{1/n}\right)' = na_1 + na_n X^{n-1}$ is never real-rooted for $n \geq 3$, so neither is $\bar{f}_{\vec{a}}^{1/n}$ according to Rolle's lemma.
\end{rem}

\bibliographystyle{plain}
\bibliography{../../Biblio}

\end{document}